\def\l@section{\@tocline{1}{12pt plus2pt}{0pt}{}{\bfseries}}
\def\l@subsection{\@tocline{2}{0pt}{2pc}{2pc}{}}
\theoremstyle{plain}
\newtheorem{thm}{Theorem}[section]
\newtheorem{cor}[thm]{Corollary}
\newtheorem{lem}[thm]{Lemma}
\newtheorem{prop}[thm]{Proposition}
\theoremstyle{definition}
\newtheorem{defn}[thm]{Definition}
\theoremstyle{remark}
\newtheorem{rem}[thm]{Remark}
\newtheorem{obs}[thm]{Observation}
\newtheorem{exa}[thm]{Example}
\theoremstyle{plain}
\numberwithin{equation}{section}
\newtheorem{ques}[thm]{Question}
\theoremstyle{plain} 
\newcommand{\thistheoremname}{}
\newtheorem{genericthm}[thm]{\thistheoremname}
  \newtheorem*{genericthm*}{\thistheoremname}
\newenvironment{namedthm*}[1]
  {\renewcommand{\thistheoremname}{#1}%
   \begin{genericthm*}}
  {\end{genericthm*}}
\newcommand{\eps}{\varepsilon}
\newcommand{\B}{{\mathbb B}}
\newcommand{\D}{{\mathbb D}}
\newcommand{\R}{{\mathbb R}}
\newcommand{\C}{{\mathbb C}}
\newcommand{\N}{{\mathbb N}}
\newcommand{\Z}{{\mathbb Z}}
\newcommand{\calD}{{\mathcal D}}
\newcommand{\calM}{{\mathcal M}}
\newcommand{\calG}{{\mathcal G}}
\newcommand{\dist}{\hbox{ \rm dist}}
\newcommand{\calS}{{\mathcal S}}
\newcommand{\vast}{\bBigg@{4}}
\newcommand{\Vast}{\bBigg@{5}}
\def\udot#1{\ifmmode\oalign{$#1$\crcr\hidewidth.\hidewidth
    }\else\oalign{#1\crcr\hidewidth.\hidewidth}\fi}
\def\R{\mathbb{R}}
\def\Z{\mathbb{Z}}
\def\T{\mathbb{T}}
\def\C{\mathbb{C}}
\def\beq{\begin{equation}}
\def\eeq{\end{equation}}
\newcommand{\doublewidetilde}[1]{{%
  \mathpalette\double@widetilde{#1}%
}}
\newcommand{\double@widetilde}[2]{%
  \sbox\z@{$\m@th#1\widetilde{#2}$}%
  \ht\z@=.9\ht\z@
  \widetilde{\box\z@}%
}
\def\one{\mbox{1\hspace{-4.25pt}\fontsize{12}{14.4}\selectfont\textrm{1}}}
\def\@makefnmark{%
  \leavevmode
  \raise.9ex\hbox{\fontsize\sf@size\z@\normalfont\tiny\@thefnmark}}
\begin{document}
	
\title[Hypersingular Operators]{From Complex--Analytic Models to Dyadic Methods: A Real--Variable Approach to Hypersingular Operators}

\author{Bingyang Hu}
\address{(Bingyang Hu) Department of Mathematics and Statistics\\
        Auburn University\\
        Auburn, Alabama, U.S.A, 36849}
\email{bzh0108@auburn.edu}

\author{Xiaojing Zhou}
\address{(Xiaojing Zhou) Department of Mathematics and Statistics\\
         Auburn University\\
         Auburn, Alabama, U.S.A, 36849}
\email{xiz0003@auburn.edu}

\subjclass[2020]{42B25, 42B20, 30H20, 46B70, 42B35}%

\keywords{Hypersingular maximal operator, Hypersingular Bergman projection, Hypersingular sparse operator, graded family, critical line estimates, Forelli--Rudin method}

\begin{abstract}
Motivated by the work of Cheng--Fang--Wang--Yu on the hypersingular Bergman projection, we develop a real-variable framework for hypersingular operators in regimes where strong-type bounds fail on the critical line. Our main new ingredient is the Forelli--Rudin method: a dyadic mechanism, inspired by complex--analytic Forelli--Rudin type arguments, that yields sharp critical-line and endpoint estimates.

On the unit disc, for $1<t<3/2$, we give a complete $(p,q)$-mapping characterization for the dyadic hypersingular maximal operator $\mathcal M_t^{\mathcal D}$, including sharp bounds on the critical line $1/q-1/p=2t-2$ and a weighted endpoint criterion in the radial setting. We also prove a novel two-weight estimate for $\mathcal M_t^{\mathcal D}$ in the range $p>q$, valid for all $t>0$. For the hypersingular Bergman projection
\[
K_{2t}f(z)=\int_{\mathbb D}\frac{f(w)}{(1-z\overline w)^{2t}}\,dA(w),
\]
we establish sharp critical-line bounds, with emphasis on the endpoint weak-type estimate at $(p,q)=\bigl(\tfrac{1}{3-2t},1\bigr)$. In particular, this result resolves an open question on the critical-line behavior of the Bergman projection in the hypersingular regime. Finally, we introduce a class of hypersingular cousins of sparse operators in $\mathbb R^n$ associated with \emph{graded} sparse families, quantified by the sparseness $\eta$ and a new structural parameter (the \emph{degree}) $K_{\mathcal S}$. We characterize the corresponding sharp strong- and weak-type regimes in terms of $(n,t,\eta,K_{\mathcal S})$.

This real-variable perspective addresses an inquiry of Cheng--Fang--Wang--Yu on developing effective real-analytic tools in the hypersingular regime for both $\calM_t^{\calD}$ and $K_{2t}$, and it also provides a new route to critical-line analysis for Forelli--Rudin type and related hypersingular operators in both
real and complex settings.

\end{abstract}

\date{\today}
\maketitle

\tableofcontents

\section{Introduction} \label{20251219sec01}

The present paper is motivated by the recent work of Cheng, Fang, Wang, and Yu \cite{CFWY2017}, who studied the following Bergman-type operator on the unit disc $\D$: for $t>0$,
\begin{equation} \label{20251228defn45}
K_{2t}f(z):=\int_{\D}\frac{f(w)}{(1-z\overline{w})^{2t}}\,dA(w),
\end{equation} 
where $dA$ denotes the normalized area measure on $\D$. The study of the operator $K_{2t}$ can, in broad terms, be divided into three regimes:
\begin{enumerate}
    \item $t=1$, in which case $K_{2t}=K_{2}$ coincides with the Bergman projection on $\D$;
    \item $0<t<1$, in which case $K_{2t}$ is the fractional Bergman projection on $\D$;
    \item $t>1$, in which case $K_{2t}$ becomes the hypersingular Bergman projection on $\D$.
\end{enumerate}
In the first two regimes, the situation is fairly well understood. One \emph{key} reason is that the Bergman projection can be viewed as a generalized Calder\'on--Zygmund operator (see, e.g., \cite{McNeal94}). Consequently, one may bring to powerful tools from Calder\'on--Zygmund theory and dyadic harmonic analysis (such as sparse domination) in the study of the Bergman projection, as well as its fractional counterparts. This has proved to be a fruitful line of research, encompassing (among many other directions) 
\begin{enumerate}
\item [$\bullet$] weight theory \cite{BB1978, PR2013, RTW17, Sehba2018}, 

\item [$\bullet$] Bergman theory in several complex variables \cite{Barrett1992, Fefferman1974, LS2012, MS1994, PS1977, WW2021}, and

\item [$\bullet$] commutator and $BMO$ theory \cite{DLLW2025, HHLPW2024, LiLu1994, Zhu1992}. 
\end{enumerate} 
\noindent We emphasize that the literature in each of these directions is extensive, and the above list is far from exhaustive and is included only for the reader's convenience.

\medskip 

For the third regime $t>1$, to the best of our knowledge, the existing results rely mainly on complex and functional analytic methods, more precisely within the framework of Forelli--Rudin type operators, dating back to the early work of Forelli and Rudin \cite{FR1974} in 1974. We refer the reader to \cite{CFWY2017, ZZ2022} for more recent developments.

A common feature of these results is that they provide strong $L^{p}$--$L^{q}$ bounds \emph{only} away from the critical line, while estimates on the critical line itself appear to be unavailable in the literature. One main reason is that strong-type bounds on the critical line generally \emph{fail} (see, Figure~\ref{Fig2}). This obstruction, in turn, make it difficult to use the techniques that are effective in the case $t\le 1$, since $K_{2t}$ becomes more singular in the hypersingular regime.

\begin{rem}
It is important to distinguish the \emph{hypersingular} operators studied in this paper from the \emph{strongly singular Calder\'on--Zygmund operators}\footnote{The authors thank \'Arp\'ad B\'enyi for pointing out this distinction.} introduced by Alvarez and Milman \cite{AM1986}. The latter are formulated under the a priori assumption that the operator extends boundedly on $L^2(\R^n)$, together with suitable off-diagonal regularity of the kernel.

In contrast, the hypersingular operators considered here do \emph{not} admit an $L^2$ theory; moreover, strong-type bounds typically fail not only at the diagonal endpoint but even along the critical line (see, e.g., Figures \ref{Fig1}, \ref{Fig2}, and \ref{Fig3} below). Consequently, hypersingular operators exhibit a genuinely different type of singular behavior from the strongly singular Calder\'on--Zygmund class.
\end{rem}

\begin{rem}
In \cite[Section~1]{CFWY2017}, the authors remarked that in the hypersingular regime $t>1$, it remains an open direction to develop a real-variable approach that yields explicit boundedness results for concrete operators. This is one main motivation for the present work.

\end{rem}

The first \emph{goal} of the current paper is to develop harmonic-analytic methods for the study of the hypersingular Bergman projections $K_{2t}$ for $t>1$, thanks to the recent development in dyadic harmonic analysis. In particular, we aim to establish estimates on the critical line, which, to the best of our knowledge, are even new in the unit disc setting. 

\medskip 

Let us now turn to some details. Instead of studying $K_{2t}$ for $t>1$ directly, it is natural from the viewpoint of harmonic analysis to first consider its maximal analogue, which we call the \emph{hypersingular maximal operator}.

\begin{defn}
For $t>1$, the \emph{hypersingular maximal operator} $\mathcal{M}_t$ is defined by
\[
\mathcal{M}_t f(z)
:= \sup_{\substack{I\subseteq \T \\ z\in Q_I}}
\frac{1}{|Q_I|^{t}} \int_{Q_I} |f(w)|\, dA(w),
\]
where $\T$ denotes the unit circle and $Q_I$ is the Carleson box associated with an arc $I\subseteq\T$. It is also convenient to introduce its dyadic analogue. Let $\mathcal{D}$ be a dyadic system on $\T$. For $t>1$, the \emph{dyadic hypersingular maximal operator} $\mathcal{M}_t^{\mathcal{D}}$ is defined by
\[
\mathcal{M}_t^{\mathcal{D}} f(z)
:= \sup_{\substack{I\in\mathcal{D} \\ z\in Q_I}}
\frac{1}{|Q_I|^{t}} \int_{Q_I} |f(w)|\, dA(w).
\]
\end{defn}

\begin{rem} \label{202525rem01}
In what follows, we will restrict our attention to the dyadic hypersingular maximal operator $\mathcal{M}_t^{\mathcal{D}}$. It is a standard fact (via the $1/3$-trick) that $\mathcal{M}_t$ is pointwise comparable to the sum of two dyadic counterparts: there exists two dyadic systems $\mathcal{D}$ and $\widetilde{\mathcal{D}}$ on $\T$ such that\footnote{Here we refer to $\calD$ and $\widetilde{\calD}$ as a pair of adjacent dyadic systems on $\T$.
}
\[
\mathcal{M}_t f(z)\simeq  \mathcal{M}_t^{\mathcal{D}} f(z)+\mathcal{M}_t^{\widetilde{\mathcal{D}}} f(z), \quad z \in \D.
\]
Here $\mathcal{D}$ may be taken to be the standard dyadic system on $\T$, and $\widetilde{\mathcal{D}}$ is the $1/3$-shifted dyadic system. This reduction dates back to the work of Garnett, Jones, and Mei on $BMO$ and its dyadic counterpart (see \cite{GJ1982, TM2003}).
\end{rem}

We first observe that it only makes sense to consider the case when $1<t<3/2$ in the hypersingular regime. Indeed, it is easy to check that 
$$
\left(\calM_t^{\calD} 1 \right)(z)=\sup_{z \in Q_I,\; I \in \calD } \frac{|Q_I|}{|Q_I|^t} \simeq \frac{1}{(1-|z|^2)^{2(t-1)}}, \qquad z \in \D. 
$$
The minimal requirement here is to make $\calM_t^{\calD} 1 \in L^1(\D)$, and hence 
\begin{equation} \label{20251218eq01a}
\int_{\D} \left| \left( \calM_t^{\calD} 1 \right)(z) \right| dA(z) \simeq \int_{\D} \frac{1}{(1-|z|^2)^{2(t-1)}} dA(z) \simeq \int_0^1 \frac{dr}{(1-r)^{2(t-1)}},
\end{equation} 
which is finite if and only if $t<3/2$. 

\begin{rem} \label{20251227rem01}
The behavior of $\calM_t^{\calD}$ is more subtle than that of the classical maximal operator over Carleson tents and its fractional analogue. In particular, it depends not only on the \emph{underlying dyadic structure}, but also on the \emph{geometry of the ambient domain}. More precisely:
    \begin{enumerate}
        \item In the real-variable setting (namely, for the classical Hardy--Littlewood maximal operator), it is clear that it is not meaningful to consider $t>1$, since in that range the operator is \emph{not} even well-defined on nonzero constant functions.

        \item Likewise, if one replaces $\D$ by the upper half plane $\mathcal H:=\{z \in \mathbb C: \textnormal{Im} z>0 \}$, then a careful inspection of the argument in \eqref{20251218eq01a} yields that the operator is again \emph{not} well-defined on nonzero constant functions whenever $t>1$ (see also \cite[Theorem~5]{CFWY2017} for a related statement concerning the hypersingular Bergman projection $K^{\mathcal H}_{2t}$ on the upper half plane).

    \end{enumerate}
\end{rem}

We have the following \emph{full} characterization for $\calM_t^{\calD}$, including the critical line estimates. Let $1<t<3/2$ and $\calD$ be any dyadic system on $\T$. Then $\calM_t^{\calD}$ (as well as $\calM_t$) is 
\begin{enumerate}
    \item {\bf (Strong-type bound)} bounded from $L^p(\D)$ to $L^q(\D)$ for 
    $$
    \frac{1}{q}-\frac{1}{p}>2t-2, \quad \textrm{with} \ 1 \le p, q \le +\infty
    $$
    (see, Proposition \ref{20251228prop01});

    \vspace{0.1cm}
    
    \item  {\bf (Weak-type bound)} bounded from $L^p(\D)$ to $L^{q, \infty} (\D)$ for 
    $$
    \frac{1}{q}-\frac{1}{p}=2t-2, \quad \textrm{with} \ 1 \le p, q \le +\infty
    $$
    (see, Lemma \ref{20251219lem02}, Theorem \ref{20251222thm01}, and Corollary \ref{20251226lem10}). 
\end{enumerate}
Moreover, the above estimates are \emph{sharp} (see, Proposition \ref{20260131prop01}). The results above are summarized in Figure \ref{Fig1} below.

\begin{center}
\begin{figure}[!htbp]
\begin{tikzpicture}[scale=4]

\def\t{1.25} 

\coordinate (A) at (0,{2*\t-2});   
\coordinate (B) at ({3-2*\t},1);   

\draw[->, black, thick] (-0.12,0) -- (1.20,0);
\draw[->, black, thick] (0,-0.12) -- (0,1.20);

\node[below]       at (1,0) { \footnotesize$1$};
\node[below right] at (1.1,0) {\footnotesize $\frac{1}{p}$};

\node[left]        at (0,1) {\footnotesize $1$};
\node[above left]  at (0,1.1) {\footnotesize $\frac{1}{q}$};

\draw[black, very thick] (0,0) rectangle (1,1);

\fill (0,0) circle (0.015);
\fill (1,0) circle (0.015);
\fill (1,1) circle (0.015);

\fill[red!30] (0,1) -- (B) -- (A) -- cycle;

\draw[red!45!red, line width=2pt] (0,1) -- (A);
\draw[red!45!red, line width=2pt] (0,1) -- (B);

\draw[blue!45!blue, line width=2.5pt] (A) -- (B);

\fill[blue!45!blue] (A) circle (0.02);
\fill[blue!45!blue] (B) circle (0.02);

\node[left]  at (A) {\tiny $(0,\,2t-2)$};
\node[above] at (B) {\tiny $(3-2t,\,1)$};
\end{tikzpicture}
\caption{\footnotesize{Boundedness of $\calM_t^{\calD}$ for $1<t<3/2$: the red line and the shaded region indicate strong $(p,q)$ bounds, while the blue line indicates weak $(p,q)$ bounds.}}
\label{Fig1}
\end{figure}
\end{center} 

\vspace{-0.3in}

Moreover, at the endpoint $\left(p, q\right)=\left(\frac{1}{3-2t},1\right)$, we obtain necessary and sufficient conditions for the endpoint bounds of $\calM_t^{\calD}$ in the radial weighted setting $\omega(z)=\omega(|z|)$, $z\in\D$. More precisely, we prove the following characterizations:
\begin{enumerate}
\item \textbf{Weak-type bounds characterization} (see, Theorem \ref{20251222thm01}): 
\[
\calM_t^{\calD}: L^p(\D,\omega)\to L^{q,\infty}(\D) \ \textrm{is bounded}
\ \Longleftrightarrow\
\sup_{k \ge 0} 2^k \int_{1-\frac{1}{2^k}}^{1-\frac{1}{2^{k+1}}}
\omega(r)^{-\frac{3-2t}{2t-2}}\,dr<+\infty
\]
\item \textbf{Strong-type bounds characterization} (see, Theorem \ref{20251222thm02}): assume in addition that $\omega \in {\bf B}_{\frac{1}{3-2t}}$. Then
\[
\calM_t^{\calD}: L^p(\D,\omega)\to L^{q}(\D) \ \textrm{is bounded}
\ \Longleftrightarrow\
\sum_{k \ge 0} 2^k \int_{1-\frac{1}{2^k}}^{1-\frac{1}{2^{k+1}}}
\omega(r)^{-\frac{3-2t}{2t-2}}\,dr<+\infty.
\]
Here, ${\bf B}_{\frac{1}{3-2t}}$ denotes the B\'ekoll\'e--Bonami weight class (see, \eqref{20251228defn21}). 
\end{enumerate}

To this end, we consider weighted estimates for $\calM_t^{\calD}$, with the main focus on the regime $\{1\le p,q \le \infty: p>q\}$. In this range, we obtain a somewhat more general two-weight estimate for $\mathcal{M}_t^{\mathcal{D}}$ for any $t>0$. More precisely, we show that for $\mu, \omega$ being two weights on $\D$ satisfying certain $\bf B_\infty$ condition (see, Definition \ref{20260119defn01}), $\calM_t^{\calD}: L^p(\omega, \D) \to L^q(\mu, \D)$ is bounded if and only if $\phi \in L^{\frac{p}{p-q}}(\D)$, where
\begin{equation} \label{20260119eq01}
\phi(z):=\sum_{I \in \calD} \beta_I \one_{Q_I^{\textrm{up}}}(z), \quad \textrm{with} \quad \beta_I:= \frac{1}{|Q_I|^{(t-1)q}} \cdot \frac{\mu(Q_I)}{|Q_I|} \cdot \left(\frac{\sigma(Q_I)}{|Q_I|} \right)^{\frac{q}{p'}},
\end{equation}
where $\sigma:=\omega^{-1/(p-1)}$ is the dual weight of $\omega$
(see, Theorem \ref{20260114thm01a}). 

\begin{rem}
\begin{enumerate}
    \item The regime $\{1\le p,q\le \infty: p\le q\}$ in the above result can be handled by standard methods in weighted theory (see, e.g., \cite{CB2016} for the case $t=1$).

    \item The condition \eqref{20260119eq01} may be viewed as a hypersingular counterpart of the B\'ekoll\'e--Bonami condition. Indeed, in the limiting case $p=q$ and $t=1$, \eqref{20260119eq01} reduces exactly to the B\'ekoll\'e--Bonami ${\bf B}_p$ condition.

    \item Condition \eqref{20260119eq01} can also be interpreted from the perspective of Bergman--Carleson embeddings in complex function theory, initiated in a series of influential works of Luecking \cite{Luecking1983,Luecking1991,Luecking1993}. 
    In that setting, the boundedness of the embedding operator $
    id: A^p(\D)\to L^q_\mu(\D)$ (here, $A^p(\D)$ denotes the standard Bergman space on $\D$) is characterized by an $L^{\frac{p}{p-q}}(\D)$-integrability condition on an appropriate testing function. 
    From this viewpoint, the above result also suggests a way to understand the B\'ekoll\'e--Bonami condition through Carleson embedding.
\end{enumerate}
\end{rem}

Our proofs of the above results build on recent developments in dyadic harmonic analysis, together with underlying geometry properties of dyadic Carleson boxes.

\begin{rem}
As pointed out in \cite{CFWY2017} (see the discussion under ``Maximal operators'' there), the maximal operator $\calM_t$ (or $\calM_t^{\calD}$) associated with Carleson boxes on $\D$ arises naturally as a hypersingular analogue of the classical maximal operator, and one expects a corresponding boundedness theory in the range $1<t<3/2$. The results above therefore provide such a theory from a real-variable perspective.

\end{rem}

\medskip 

Next, we consider the behavior of $K_{2t}$. As observed in \cite{CFWY2017}, it suffices to restrict to the range $1<t<3/2$. In this regime, we obtain a \emph{full} characterization of the behavior of $K_{2t}$, including the critical-line behavior. In particular, $K_{2t}$ is 
\begin{enumerate}
    \item {\bf (Strong-type bound)} bounded from $L^p(\D)$ to $L^q(\D)$ for 
    $$
    \frac{1}{q}-\frac{1}{p}>2t-2, \quad \textrm{with} \ 1 \le p, q \le +\infty
    $$
    (see, \cite[Theorem 3]{CFWY2017}, and also Theorem~\ref{20251227thm01}, (1) for a generalization and strengthening of this result in terms of hypersingular sparse operators);

    \vspace{0.1cm}

    \item {\bf (Weak-type bound)} bounded from $L^p(\D)$ to $L^{q, \infty} (\D)$ for 
    $$
    \frac{1}{q}-\frac{1}{p}=2t-2, \quad \textrm{with} \ 1 \le p, q \le +\infty. 
    $$
    (see, Lemma \ref{20251225lem01}, Proposition \ref{20251226thm01},  and Corollary \ref{20251228cor01}).  
\end{enumerate}
Moreover, the above estimates for $K_{2t}$ are \emph{sharp} (see, Proposition \ref{20260131prop02}). We summarize the above results for $K_{2t}$ in Figure \ref{Fig2} below.

\vspace{-0.3cm}

\begin{center}
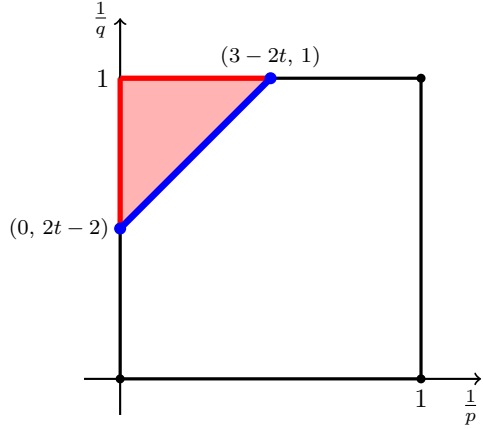
\begin{figure}[!htbp]
\begin{tikzpicture}[scale=4]

\def\t{1.25} 

\coordinate (A) at (0,{2*\t-2});   
\coordinate (B) at ({3-2*\t},1);   

\draw[->, black, thick] (-0.12,0) -- (1.20,0);
\draw[->, black, thick] (0,-0.12) -- (0,1.20);

\node[below]       at (1,0) { \footnotesize$1$};
\node[below right] at (1.1,0) {\footnotesize $\frac{1}{p}$};

\node[left]        at (0,1) {\footnotesize $1$};
\node[above left]  at (0,1.1) {\footnotesize $\frac{1}{q}$};

\draw[black, very thick] (0,0) rectangle (1,1);

\fill (0,0) circle (0.015);
\fill (1,0) circle (0.015);
\fill (1,1) circle (0.015);

\fill[red!30] (0,1) -- (B) -- (A) -- cycle;

\draw[red!45!red, line width=2pt] (0,1) -- (A);
\draw[red!45!red, line width=2pt] (0,1) -- (B);

\draw[blue!45!blue, line width=2.5pt] (A) -- (B);

\fill[blue!45!blue] (A) circle (0.02);
\fill[blue!45!blue] (B) circle (0.02);

\node[left]  at (A) {\tiny $(0,\,2t-2)$};
\node[above] at (B) {\tiny $(3-2t,\,1)$};
\end{tikzpicture}
\caption{\footnotesize{Boundedness of $K_{2t}$ for $1<t<3/2$: the red line and the shaded region indicate strong $(p,q)$ bounds, and the blue line indicates weak $(p,q)$ bounds.
}}
\label{Fig2}
\end{figure}
\end{center} 

\vspace{-0.2in}

One \emph{main} difficulty in analyzing $K_{2t}$ occurs at the endpoint $p=\frac{1}{3-2t}$ and $q=1$ (see, Proposition \ref{20251226thm01}). As noted in \cite[Theorem~3]{CFWY2017}, strong-type bounds fail along the critical line $1/q-1/p=2t-2$, so $K_{2t}$ behaves more singularly in this regime. This is in sharp contrast with the Bergman projection $K_{2}$, whose $L^{2}$ boundedness is is simply guaranteed by its definition.

To address this issue, we introduce a new approach, which we call the \emph{Forelli--Rudin method}.
For the endpoint behavior of $K_{2t}$ at $(p,q)=\bigl(\tfrac{1}{3-2t},1\bigr)$, we will use its complex--analytic version, which serves as a motivating case.

\medskip 

\noindent {\bf Key idea 1: A complex--analytic version of the Forelli--Rudin method (motivating case).}
To treat $K_{2t}$ at the endpoint $(p,q)=\bigl(\tfrac{1}{3-2t},1\bigr)$, the main idea is to use the factorization
\begin{equation} \label{20260127eq50a}
K_{2t} f(z)=W(z)\,B_{2t}f(z), \qquad z\in\D,
\end{equation} 
where $W(z)=(1-|z|^2)^{2-2t}$ and $B_{2t}f(z):=(1-|z|^2)^{2t-2}K_{2t}f(z)$.
This separates the singularity into a simple weight $W$ and a less singular Forelli--Rudin type operator $B_{2t}$, thereby reducing the endpoint weak-type bound to a strong-type estimate for $B_{2t}$ together with a borderline Lorentz control of $W$.

\begin{rem} \label{20260106rem01}
A careful examination of the proof of the above results for $K_{2t}$ shows that the same bounds also hold for the associated positive operator 
$$
K_{2t}^{+}f(z):=\int_{\D}\frac{f(w)}{|1-z\overline{w}|^{2t}}\,dA(w),
$$
which can be regarded as a hypersingular analogue of the Berezin transform.  
\end{rem}

Motivated by our approaches to $\calM_t^{\calD}$ and $K_{2t}$, it is natural to ask whether these methods can be extended to study hypersingular analogues of sparse operators in harmonic analysis. This leads to the second main \emph{goal} of the present paper. More precisely, let $t>1$ and let $\calS$ be a sparse family in $\R^n$ such that\footnote{This global containment assumption is natural in the hypersingular setting (see Remark~\ref{20251229rem02} for further discussion).} there exists a dyadic cube $Q_0$ with $Q\subseteq Q_0$ for all $Q\in\calS$. Consider the following \emph{hypersingular averaging operator}
\[
\mathbb A_{\calS}^t f(x)
:=\sum_{Q\in\calS}\frac{\one_{Q}(x)}{|Q|^{t}}\int_Q |f(y)|\,dy.
\]
We are interested in determining the admissible $(p, q)$-range for which $\mathbb A_{\calS}^t$ is of strong type, or weak type. Note that the study of $\mathbb A_{\calS}^t$ is also of independent interest from the viewpoint of dyadic harmonic analysis, since it can be viewed as a hypersingular counterpart of the classical sparse operator.

It turns out that the boundedness behavior of $\mathbb A_{\calS}^t$ is determined by the following \emph{four} parameters:
\begin{enumerate}
    \item $n \ge 1$, the \emph{real dimension} of the ambient space; 
    \item $t>1$, the \emph{hypersingular index} of the averaging operator  $\mathbb A_{\calS}^t$;
    \item $\eta \in (0, 1)$, the \emph{sparseness} of $\calS$,  measuring how much of each cube can be chosen disjointly (see, Definition \ref{20251227defn01a}).
    \item $K_{\calS} \ge 1$, the \emph{degree} of $\calS$, rouphly speaking, which measures the maximal dyadic scale drop between consecutive layers (see, Definition \ref{20251227defn01}).
\end{enumerate}
We note that, in the hypersingular regime, it is \emph{pivotal} to assume that the degree $K_{\calS}$ is \emph{finite}. We have the following motivating example. 

\begin{exa} \label{20251227obs01}
Let $t>1$ and 
$$
\calS_m:=\left\{ I_k:=\left[ \frac{k}{2^m}, \frac{k+1}{2^m} \right), 0 \le k \le 2^m-1 \right\} \cup \left\{ [0, 2) \right\},
$$
for each $m \ge 1$. It is clear that $\calS_m$ is an $\eta$-sparse family for any $\eta \in (1/2, 1)$ and any $m \ge 1$; however
$$
\mathbb A_{\calS_m}^t 1(x) \ge \sum_{k=0}^{2^m-1} \frac{\one_{I_k}(x)}{|I_k|^t} \int_{I_k} 1 dx=2^{m(t-1)},  \quad x \in [0, 1), 
$$
which implies $\left\|\mathbb A_{\calS_m}^t \right\|_{L^\infty([0. 2)) \to L^1([0, 2))} \gtrsim 2^{m(t-1)}$. This example shows that, in order to capture the behavior of $\mathbb A_{\calS}^t$, the sparseness of $\calS$ alone does not suffice, and one must have certain control for the dyadic scales between consecutive layers in $\calS$. This simple example also highlights a fundamental difference between the sparse operator and its hypersingular counterpart: in the former case, the degree plays essentially no role. We refer the reader to Section~\ref{20251229subsec01} for further discussion of this topic.
\end{exa}

Here are the main results in the second part of the paper. Let $\calS$ be a sparse family in $\R^n$ with sparseness $\eta\in(0,1)$ and degree $K_{\calS}\in[1,\infty)$; we refer to such a family as a \emph{graded sparse family}. Then for any $1<t<1-\frac{\log_2(1-\eta)}{nK_{\calS}}$, we have $\mathbb A_{\calS}^t$ is 
\begin{enumerate}
    \item {\bf (Strong-type bound)} bounded from $L^p(\D)$ to $L^q(\D)$ for 
    $$
    \frac{1}{q}-\frac{1}{p}>\frac{nK_{\calS}(t-1)}{-\log_2(1-\eta)}, \quad \textrm{with} \ 1 \le p, q \le +\infty;
    $$

    \vspace{0.1cm}

    \item {\bf (Weak-type bound)} bounded from $L^p(\D)$ to $L^{q, \infty} (\D)$ for 
    $$
    \frac{1}{q}-\frac{1}{p}=\frac{nK_{\calS}(t-1)}{-\log_2(1-\eta)}, \quad \textrm{with} \ 1 \le p, q \le +\infty. 
    $$ 
\end{enumerate}
We will prove these estimates in Theorem~\ref{20251227thm01}. For the reader’s convenience, we summarize the resulting bounds below (see Figure~\ref{Fig3}).

\begin{center}
\begin{figure}[!htbp]
\begin{tikzpicture}[scale=4]

\def\t{1.25} 

\coordinate (A) at (0,{2*\t-2});   
\coordinate (B) at ({3-2*\t},1);   

\draw[->, black, thick] (-0.12,0) -- (1.20,0);
\draw[->, black, thick] (0,-0.12) -- (0,1.20);

\node[below]       at (1,0) { \footnotesize$1$};
\node[below right] at (1.1,0) {\footnotesize $\frac{1}{p}$};

\node[left]        at (0,1) {\footnotesize $1$};
\node[above left]  at (0,1.1) {\footnotesize $\frac{1}{q}$};

\draw[black, very thick] (0,0) rectangle (1,1);

\fill (0,0) circle (0.015);
\fill (1,0) circle (0.015);
\fill (1,1) circle (0.015);

\fill[red!30] (0,1) -- (B) -- (A) -- cycle;

\draw[red!45!red, line width=2pt] (0,1) -- (A);
\draw[red!45!red, line width=2pt] (0,1) -- (B);

\draw[blue!45!blue, line width=2.5pt] (A) -- (B);

\fill[blue!45!blue] (A) circle (0.02);
\fill[blue!45!blue] (B) circle (0.02);

\node[left]  at (A) {\tiny $\left(0, \frac{nK_{\calS}(t-1)}{-\log_2(1-\eta)} \right)$};
\node[above] at (B) {\tiny $\left(\frac{-\log_2(1-\eta)+nK_{\calS}(1-t)}{-\log_2(1-\eta)}, 1 \right)$};
\end{tikzpicture}
\caption{\footnotesize{Boundedness of $\mathbb A_{\calS}^t$ for $1<t<1-\frac{\log_2(1-\eta)}{nK_{\calS}}$: the red line and the shaded region indicate strong $(p,q)$ bounds, and the blue line indicates weak $(p,q)$ bounds.
}}
\label{Fig3}
\end{figure}
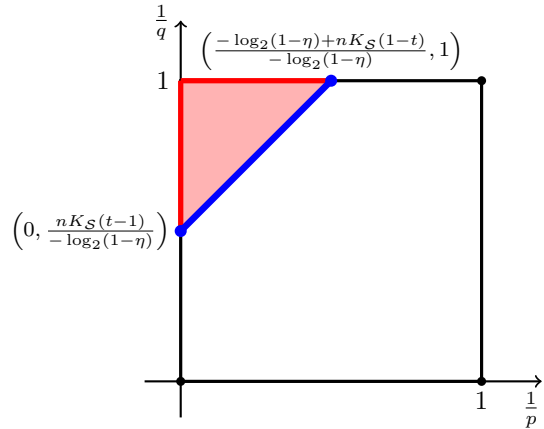
\end{center} 

\vspace{-0.3in}

The crux of the analysis for $\mathbb A_{\calS}^t$ is to establish the weak-type estimate at the endpoint
\begin{equation} \label{20260128eq02}
p=\frac{-\log_2(1-\eta)}{-\log_2(1-\eta)+nK_{\calS}(1-t)}
\qquad\text{and}\qquad
q=1.
\end{equation}
The difficulties at this endpoint are two--fold:
\begin{itemize}
\item As for $K_{2t}$, strong-type bounds fail along the critical line for $\mathbb A_{\calS}^t$.  Consequently, both the Calder\'on--Zygmund approach (for instance, the Calder\'on--Zygmund decomposition, which relies on an $L^{r}$ bound, $1<r<\infty$, to control the bad function) and more recent sparse domination techniques (e.g.\ the weak-type machinery in \cite[Theorem~E]{CCDO2017}) are more delicate to implement in this hypersingular setting.

Indeed, we present two independent proofs, one based on sparse domination machinery and the other using Bourgain's interpolation trick, showing that at the endpoint \eqref{20260128eq02} the operator $\mathbb A_{\calS}^t$ satisfies a \emph{restricted weak-type estimate}. While this is weaker than the weak-type bound proved in Theorem~\ref{20251227thm01}, it already suffices to obtain weak-type estimates on the critical line away from this endpoint. We refer to Section~\ref{20260127subsec01} for details.

\item In this dyadic setting, we also lack an underlying complex-analytic geometry. More precisely, in several complex variables, quantities such as $1-|z|$ are typically interpreted as the boundary distance $\dist(z,\T)$. In contrast, for $\mathbb A_{\calS}^t$ there is no canonical notion of boundary distance, and it is unclear what should play the role of the ``boundary'' of the region.

\end{itemize}

\vspace{0.1cm}

\noindent {\bf Key idea 2: A dyadic version of the Forelli--Rudin method.}
To exploit the idea in \eqref{20260127eq50}, the \emph{key} observation is that the boundary distance $1-|z|$ admits a convenient dyadic re-encoding. Fix $z\in\D$ and let $\calD$ be any dyadic system on $\T$. Define
\[
{\bf N}(z):=\#\{\, I\in\calD:\ z\in Q_I\,\},
\qquad z\in\D,
\]
namely, the number of dyadic Carleson boxes $Q_I$ (with $I\in\calD$) that contain $z$. Then
\[
{\bf N}(z)\simeq \log_2 \frac{1}{1-|z|},
\]
and hence one may view
\[
1-|z| \simeq 2^{-{\bf N}(z)}.
\]
This identity serves as a \emph{bridge} between analysis on holomorphic function spaces and dyadic real-variable harmonic analysis.
Indeed, its left-hand side is a boundary distance from the intrinsic geometry of $\D$, while its right-hand side is a counting
quantity for dyadic Carleson boxes.  It allows us to replace the boundary scale by a dyadic level, and it is the starting point
for the dyadic Forelli--Rudin method.

Using now this idea for $\mathbb A_{\calS}^t$, one can prove the pointwise bound
\begin{equation}\label{20260128eq01}
\mathbb A_{\calS}^t f(x) \lesssim {\bf W}(x)\, M^{\calD}_{\mathrm{HL}}f(x),
\qquad x\in\R^n,
\end{equation}
where ${\bf W}$ is a suitable weight function and $M^{\calD}_{\mathrm{HL}}$ denotes the dyadic Hardy--Littlewood maximal operator on $\R^n$ associated to $\calD$. \eqref{20260128eq01} is precisely the dyadic analogue of \eqref{20260127eq50a} (see Theorem~\ref{20251227thm01}, {\bf Step~III} for more details).
The weak-type bound at the endpoint \eqref{20260128eq02} then follows by establishing the appropriate Lorentz-space estimates for ${\bf W}$.

\medskip 

We make some further remarks. 

\begin{rem}
\begin{enumerate}
\item A noteworthy feature of Theorem~\ref{20251227thm01} is that the geometric parameters of the family---in particular, the sparseness $\eta$ and the degree $K_{\calS}$---enter the \emph{admissible} $(p,q)$-range through the critical relation
\[
\frac1q-\frac1p=\frac{nK_{\calS}(t-1)}{-\log_2(1-\eta)}.
\]
In other words, the geometry of the underlying collection directly affects the \emph{range} of $L^p$--$L^q$ boundedness for $\mathbb A_{\calS}^t$. This is in sharp contrast with the classical sparse operators (corresponding to $t=1$), where sparseness influences only the operator norm, while the set of admissible exponents $(p,q)$ is independent of the particular sparse family.

\item The results above strengthen our earlier characterization of the hypersingular Bergman projection
$K_{2t}$ in the range $1<t<3/2$. Indeed, this follows from the pointwise sparse bounds
$$
\left| K_{2t} f(z) \right| \lesssim \mathbb A_{\{Q_I\}_{I \in \calD}}^t f(z)+\mathbb A_{\{Q_I\}_{I \in \widetilde{\calD}}}^t f(z), \qquad z \in \D, 
$$
where we recall $(\calD, \widetilde{\calD})$ is any pair of adjacent dyadic systems on $\T$ (see, Remark \ref{202525rem01}), and the fact that $\{Q_I\}_{I \in \calD}$ (in real dimension $n=2$) forms a graded sparse family with sparseness $\eta=1/2$ and degree $K_{\calS}=1$.
With these parameters, one has
\[
\frac{nK_{\calS}(t-1)}{-\log_2(1-\eta)}=2t-2,
\quad\text{and}\quad
\frac{-\log_2(1-\eta)}{-\log_2(1-\eta)+nK_{\calS}(1-t)}=\frac{1}{3-2t},
\]
which exactly coincide with those arising in the mapping theory of $K_{2t}$.

\vspace{0.1cm}

\item The above estimates for \(\mathbb A_{\calS}^t\) are \emph{sharp}, as is already reflected in our earlier strong and weak-type characterizations of \(\calM_t^{\calD}\) and \(K_{2t}\). These observations suggest that the $L^p$--$L^q$ mapping properties of $K_{2t}$ are governed primarily by the geometry of (dyadic) Carleson boxes, rather than analyticity.
\vspace{0.1cm}

\item In the results above, the sparseness parameter $\eta$ is understood \emph{with respect to the underlying dyadic grid}.
This is the reason that the quantity $-\log_2(1-\eta)$ (and hence a base-$2$ logarithm) appears naturally in our main theorem.
For a discussion of how this normalization behaves under changing the base of the grid (e.g.\ dyadic versus triadic systems),
see Remark~\ref{20251230rem01}.

\end{enumerate} 
\end{rem}

We conclude the introduction by summarizing the main novelties of the present paper, which can be divided into the following aspects.

\vspace{0.1cm}

\begin{enumerate}
\item[(a)] 
From the viewpoint of complex function theory, we develop a systematic theory of the hypersingular maximal operator $\calM_t^{\calD}$ and also advance the study of Forelli--Rudin type operators in the hypersingular regime by establishing new sharp endpoint and critical-line estimates for $K_{2t}$ with $t>1$. To the best of our knowledge, these estimates are new even in the unit disc setting. A \emph{key} feature of our approach is the introduction of Forelli--Rudin method into this setting, which allows us to overcome the lack of strong-type bounds and provides a robust harmonic-analytic framework for hypersingular operators in both real and complex settings. 

\medskip

\item[(b)] 
From the perspective of harmonic analysis, we introduce a new dyadic model, which we refer to as a \emph{hypersingular sparse operator}. 
Unlike classical sparse operators associated with Calder\'on--Zygmund theory or recent sparse domination frameworks, the $(p,q)$-boundedness range of these hypersingular sparse operators depends intrinsically on the geometry of the underlying sparse family.
In particular, geometric parameters such as sparseness and degree directly influence the admissible $(p, q)$-range, rather than merely affecting operator norms. 
This reveals a genuinely new phenomenon beyond both classical Calder\'on--Zygmund theory and standard sparse domination.

\medskip

\item[(c)] 
More broadly, the Forelli--Rudin methed developed in this paper provides a flexible and systematic tool for treating hypersingular operators.
We expect that this approach will have further applications,  including weighted estimates, commutator theory, hypersingular Bergman theory in several complex variables (on general domains), and related problems, both in complex analysis and in other areas of analysis; see Section~\ref{20251219sec06} for possible directions for future work.
\end{enumerate}

\medskip 

The rest of this paper is organized as follows. In Section \ref{20251219sec02}, we collect basic definitions and notation that will be used throughout the paper. In Section \ref{20251219sec03}, we establish the sharp $L^p$ mapping theory for the dyadic hypersingular maximal operator $\calM_t^{\mathcal D}$, including both the off-critical-line bounds and the critical-line estimates, together with the endpoint characterizations in the radial weighted setting. Moreover, we prove a two-weight estimate for $\calM_t^{\calD}, t>0$. In Section \ref{20251219sec04}, we turn to the hypersingular Bergman projection $K_{2t}$ and prove the sharp critical-line estimates, with special emphasis on the endpoint $(p,q)=\left(\frac{1}{3-2t},1\right)$ via a complex--analytic version of the Forelli--Rudin method. In Section \ref{20251219sec05}, we introduce hypersingular sparse operators associated with graded sparse families in $\mathbb R^n$ and prove the corresponding $L^p$-$L^q$ theory via a dyadic version of the Forelli--Rudin method, which in particular extends and strengthens the results obtained earlier for $\calM_t^{\calD}$ and $K_{2t}$. In addition, we include a discussion of two other different approaches to study $\mathbb A_{\calS}^t$: one via ideas from sparse domination, and the other via Bourgain's interpolation trick. Finally, in Section \ref{20251219sec06}, we propose several open problems and further directions motivated by the methods developed in this paper.

Throughout this paper, for $a ,b \in  \mathbb{R}$, $a\lesssim b$ means there exists a positive number $C$, which is independent of $a$ and $b$, such that $a\leq C\,b$. Moreover, if both $a \lesssim  b$ and $b\lesssim a$ hold, then we say $a \simeq b$. \\
{\bf Acknowledgement.} The authors are grateful to \'Arp\'ad B\'enyi for drawing their attention to strongly singular Calder\'on--Zygmund operators. They also thank Yongsheng Han, Cody Stockdale, Yun--Hao Lee, Kenan Zhang, and Zipeng Wang for helpful discussions during various stages of this work. The first author was supported by the Simons Travel grant MPS-TSM-00007213.

\medskip 

\section{Notations} \label{20251219sec02}

In this section, we collect several basic definitions and notations that will be used frequently throughout the paper.

\vspace{0.1cm}

Let $I\subseteq\T$ be an arc. Define the \emph{Carleson box} associated with $I$ by
\[
Q_I:=\left\{z\in\D:\ \frac{z}{|z|}\in I,\ 1-|I|\le |z|<1\right\},
\]
and also the corresponding \emph{upper-half tent} by
\[
Q_I^{\mathrm{up}}:=\left\{z\in\D:\ \frac{z}{|z|}\in I,\ 1-|I|\le |z|<1-\frac{|I|}{2}\right\}.
\]
A \emph{dyadic system} (or \emph{dyadic grid}) $\calD$ on $\T$ is a collection of arcs
\[
\calD=\bigcup_{k\ge 0}\calD_k,
\]
where each \emph{generation} $\calD_k$ consists of $2^k$ disjoint arcs of equal length $2^{-k}$
whose union is $\T$ (equipped with the normalized arc measure) and such that each arc $I\in\calD_k$ is the union of two arcs
$I^{(1)},I^{(2)}\in\calD_{k+1}$ (called the \emph{dyadic children} of $I$).
Equivalently, after the identification $\T\simeq[0,1)$, one may take
\begin{equation} \label{20251230eq01}
\calD_k=\Bigl\{\Bigl[\frac{m}{2^k},\frac{m+1}{2^k}\Bigr): \ m=0,1,\dots,2^k-1\Bigr\},
\end{equation} 
and view each such interval as an arc on $\T$.

\vspace{0.1cm}

Next, we record a few basic estimates and conventions. Let $1\le p,q\le+\infty$, and let $\mathcal{T}$ denote a sublinear operator on $\D$.
The strong-type bound $\mathcal{T}:L^p(\D)\to L^q(\D)$ is understood in the usual sense.

For $1\le p<\infty$, the \emph{weak Lebesgue space} $L^{p,\infty}(\D)$ consists of all measurable functions $f$ on $\D$ such that
\[
\|f\|_{L^{p,\infty}(\D)}
:=\sup_{\lambda>0}\,\lambda\,\big|\{z\in\D:\ |f(z)|>\lambda\}\big|^{1/p}<\infty.
\]
The Lorentz space $L^{p,r}(\D)$ is defined to be the collection of all measurable functions $f$ such that
$$
\left\|f \right\|_{L^{p, r}(\D)}:=\left( p\int_0^\infty \lambda^r \left| \left\{z \in \D: |f(z)|>\lambda \right\} \right|^{\frac{r}{p}}\frac{d\lambda}{\lambda} \right)^{\frac{1}{r}}, \qquad 0<r<+\infty,  
$$
with the usual modification when $p=\infty$.

We say that $\mathcal{T}$ is of \emph{weak type} $(p,q)$, and write $\mathcal{T}:L^p(\D)\to L^{q,\infty}(\D)$ is bounded, if there exists $C>0$ such that for all $f\in L^p(\D)$ and all $\lambda>0$,
\[
\big|\{z\in\D:\ |\mathcal{T}f(z)|>\lambda\}\big|
\le C\,\lambda^{-q}\|f\|_{L^p(\D)}^{\,q},
\]
with the standard modification when $q=\infty$.

Moreover, we say that $\mathcal{T}$ is of \emph{restricted weak type} $(p,q)$ if $\mathcal{T}:L^{p,1}(\D)\to L^{q,\infty}(\D)$ is bounded, 
or equivalently (a well-known fact), if there exists $C>0$ such that for every measurable set $E\subset\D$ with $|E|<\infty$ and all $\lambda>0$,
\[
\big|\{z\in\D:\ |\mathcal{T}\one_E(z)|>\lambda\}\big|
\le C\,\lambda^{-q}|E|^{q/p},
\]
again with the usual modification when $q=\infty$.

Finally, given a nonnegative locally integrable function $\omega$ on $\D$, referred to as a \emph{weight}, we define the \emph{weighted space} $L^p(\D,\omega)$ to be the collection of all measurable functions on $\D$ satisfying $\|f\|^p_{L^p(\D,\omega)}:=\int_{\D}|f(z)|^p\,\omega(z)\,dz<+\infty$, with the usual modification when $p=\infty$.
A weight $\omega$ is called \emph{radial} if $\omega(z)$ depends only on $|z|$, that is, $\omega(z)=\omega(|z|)$ for all $z\in\D$.

\medskip

\section{$L^p$ theory for the hypersingular maximal operator} \label{20251219sec03}

In this section, we divide our analysis of $\calM_t^{\calD}$ into two distinct regimes:
 
\begin{enumerate}
    \item [(1)] Off-critical line regime, that is, when $ (p, q)  \in \left\{\left(\frac{1}{p}, \frac{1}{q} \right) \in  [0, 1]^2: \frac{1}{q}-\frac{1}{p}>2t-2 \right\}$;
    \item [(2)] Critical line regime, that is, when $(p, q) \in \left\{ \left(\frac{1}{p}, \frac{1}{q} \right) \in [0, 1]^2: \frac{1}{q}-\frac{1}{p}=2t-2 \right\}$.
\end{enumerate}
Here and henceforth, we always assume that $1<t<3/2$. 

\subsection{Off-critical line estimate} We begin with the following observation. 

\begin{obs} \label{20251219obs01}
    For any $0<\varepsilon \le 3-2t$, $\calM_t^{\calD}: L^\infty(\D) \to L^{\frac{1}{2t-2+\varepsilon}} (\D)$ is bounded. 
\end{obs}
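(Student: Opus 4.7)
The plan is to reduce to a pointwise size estimate and then integrate, essentially a refinement of the calculation already recorded in \eqref{20251218eq01a}.

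First, for any $f \in L^\infty(\D)$ and any dyadic arc $I \in \calD$ containing $z$ in its Carleson box, the trivial bound
\[
\frac{1}{|Q_I|^t}\int_{Q_I}|f(w)|\,dA(w)\;\le\;\|f\|_{L^\infty(\D)}\,|Q_I|^{1-t}
\]
holds. Since $1-t<0$, the right-hand side is largest when $|Q_I|$ is smallest, and so the supremum defining $\calM_t^{\calD} f(z)$ is attained on the \emph{smallest} dyadic Carleson box containing $z$. By the dyadic structure in \eqref{20251230eq01} and the relation $|Q_I|\simeq |I|^2$, this smallest box $Q_{I(z)}$ satisfies $|Q_{I(z)}|\simeq (1-|z|^2)^2$. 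Therefore
\[
\calM_t^{\calD} f(z)\;\lesssim\;\|f\|_{L^\infty(\D)}\,\bigl(1-|z|^2\bigr)^{-2(t-1)},\qquad z\in\D,
\]
which is exactly the pointwise analogue of $\calM_t^{\calD} 1(z)\simeq (1-|z|^2)^{-2(t-1)}$.

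Next, I raise this to the power $q=\tfrac{1}{2t-2+\eps}$ and integrate in polar coordinates to obtain
\[
\bigl\|\calM_t^{\calD} f\bigr\|_{L^q(\D)}^{\,q}
\;\lesssim\;\|f\|_{L^\infty(\D)}^{\,q}\int_0^1 (1-r)^{-2q(t-1)}\,dr.
\]
A direct computation gives
\[
2q(t-1)\;=\;\frac{2(t-1)}{2t-2+\eps}\;=\;\frac{2t-2}{2t-2+\eps}\;<\;1,
\]
so the radial integral converges, yielding $\bigl\|\calM_t^{\calD} f\bigr\|_{L^q(\D)}\lesssim \|f\|_{L^\infty(\D)}$ with the implicit constant depending only on $t$ and $\eps$.

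There is no genuine obstacle here: the only point requiring a little care is identifying the smallest dyadic Carleson box containing a given $z\in\D$ (which is where the sup over $I$ concentrates when $t>1$), and this is immediate from the standard dyadic construction on $\T$. The strict inequality $q<\tfrac{1}{2t-2}$ built into the choice $q=\tfrac{1}{2t-2+\eps}$ is precisely what makes the radial integral converge, matching the critical threshold already observed in \eqref{20251218eq01a}.
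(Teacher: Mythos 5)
Your argument is correct and is essentially the paper's proof: both reduce to the pointwise bound $\calM_t^{\calD} f(z)\lesssim \|f\|_{L^\infty(\D)}\,\calM_t^{\calD}1(z)\simeq \|f\|_{L^\infty(\D)}(1-|z|^2)^{-2(t-1)}$ (which the paper already computed in \eqref{20251218eq01a}) and then integrate radially, using that $\tfrac{2t-2}{2t-2+\eps}<1$. Your only addition is spelling out that the bound $\|f\|_{L^\infty}|Q_I|^{1-t}$ is maximized at the smallest dyadic box containing $z$, which is the same computation in slightly more detail.
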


\begin{proof}
The proof follows from a direct computation. Indeed, 
\begin{align*}
\int_{\D} \left| \calM_t^{\calD} f(z) \right|^{\frac{1}{2t-2+\varepsilon}}dA(z)
& \lesssim \left\|f \right\|^{\frac{1}{2t-2+\varepsilon}}_{L^\infty} \int_{\D} \left| \calM_t^{\calD} 1 (z) \right|^{\frac{1}{2t-2+\varepsilon}} dA(z) \\
& \simeq \left\|f \right\|^{\frac{1}{2t-2+\varepsilon}}_{L^\infty} \int_{\D} \frac{1}{(1-|z|^2)^{\frac{2(t-1)}{2(t-1)+\varepsilon}}}dA(z)\\ 
&\simeq \left\|f \right\|^{\frac{1}{2t-2+\varepsilon}}_{L^\infty} \int_0^1 \frac{dr}{(1-r)^{\frac{2(t-1)}{2(t-1)+\varepsilon}}} \lesssim \left\|f \right\|^{\frac{1}{2t-2+\varepsilon}}_{L^\infty}. 
\end{align*}
From the above arguments, it is not hard to see that $\calM_t^{\calD}$ is unbounded from $L^\infty(\D)$ to $L^{\frac{1}{2t-2}}(\D)$. 
\end{proof}
Next, we prove estimate near the endpoint $\left(1/p, 1/q \right)=(3-2t, 1)$. 

\begin{lem} \label{20251219lem01}
For any $0<\varepsilon \le 3-2t$, $\calM_t^{\calD}: L^{\frac{1}{3-2t-\varepsilon}}(\D) \to L^1(\D)$ is bounded. 
\end{lem}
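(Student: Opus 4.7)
The plan is to reduce $\calM_t^{\calD}$ to the classical dyadic maximal operator associated with the Carleson boxes, modulated by an explicit radial weight. The key geometric observation is that whenever $z\in Q_I$ for some $I\in\calD$, one has $|I|\ge 1-|z|$, and hence $|Q_I|\simeq |I|^2\gtrsim (1-|z|)^2$. Since $1-t<0$, this yields $|Q_I|^{1-t}\lesssim (1-|z|)^{-2(t-1)}$. Splitting
\[
\frac{1}{|Q_I|^t}\int_{Q_I}|f|\,dA \;=\; |Q_I|^{1-t}\cdot\frac{1}{|Q_I|}\int_{Q_I}|f|\,dA
\]
and using the elementary inequality $\sup(ab)\le(\sup a)(\sup b)$ over all $I\in\calD$ with $z\in Q_I$, I would arrive at the pointwise control
\[
\calM_t^{\calD} f(z)\;\lesssim\;(1-|z|)^{-2(t-1)}\,M^{\calD}f(z),
\]
where $M^{\calD}$ denotes the usual dyadic maximal operator over the family $\{Q_I\}_{I\in\calD}$.

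The next step is to integrate this bound and apply H\"older's inequality with the conjugate pair $(p,p')$, where $p=\frac{1}{3-2t-\varepsilon}$ (so that $\frac{1}{p'}=2t-2+\varepsilon$), reducing matters to
\[
\int_{\D}\calM_t^{\calD} f\,dA \;\lesssim\;\bigl\|M^{\calD}f\bigr\|_{L^p(\D)}\cdot\bigl\|(1-|\cdot|)^{-2(t-1)}\bigr\|_{L^{p'}(\D)}.
\]
For the first factor I would invoke the standard dyadic $L^p$-boundedness of $M^{\calD}$ (valid for all $1<p\le\infty$, which applies here since $p=\frac{1}{3-2t-\varepsilon}>1$ whenever $t>1$). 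For the second factor, the computation
\[
\int_{\D}(1-|z|)^{-2(t-1)p'}\,dA(z)\;\simeq\;\int_0^1 (1-r)^{-2(t-1)p'}\,dr
\]
is finite exactly when $2(t-1)p'<1$, i.e., $\frac{1}{p'}>2t-2$, which is precisely the strict inequality $2t-2+\varepsilon>2t-2$ guaranteed by $\varepsilon>0$.

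At the endpoint $\varepsilon=3-2t$ (so $p=\infty$), the $L^\infty\to L^\infty$ bound for $M^{\calD}$ is trivial and the integrability of $(1-|\cdot|)^{-2(t-1)}$ in $L^1(\D)$ is exactly \eqref{20251218eq01a}, so the argument covers the full range $0<\varepsilon\le 3-2t$ uniformly. I do not anticipate a serious obstacle; the one point deserving care is that the pointwise bound not be wasteful at the relevant scale, but $|Q_I|^{1-t}$ is maximized (up to constants) at the minimal admissible scale $|I|\simeq 1-|z|$, so the radial weight $(1-|z|)^{-2(t-1)}$ is the sharp profile dictating the behavior of $\calM_t^{\calD}$ as $\varepsilon\to 0^+$. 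In particular, the constant in the final estimate blows up like $\varepsilon^{-1/p'}$ (from the $L^{p'}$ norm of the weight), consistent with the expected breakdown of the strong-type bound at the endpoint $p=\frac{1}{3-2t}$.
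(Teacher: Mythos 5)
Your proposal is correct, but it takes a genuinely different and more elementary route than the paper. You prove the pointwise domination $\calM_t^{\calD}f(z)\lesssim (1-|z|)^{-2(t-1)}\,\calM^{\calD}f(z)$ (valid since every admissible box containing $z$ has $|Q_I|\simeq|I|^2\gtrsim (1-|z|)^2$ and $1-t<0$), and then conclude by H\"older with exponents $\bigl(\tfrac{1}{3-2t-\varepsilon},\tfrac{1}{2t-2+\varepsilon}\bigr)$, the $L^p$ boundedness of the standard dyadic Carleson-box maximal operator (note $p=\tfrac{1}{3-2t-\varepsilon}>1$ automatically because $t>1$), and the integrability of $(1-r)^{-2(t-1)p'}$, which holds precisely because $\varepsilon>0$; the endpoint $\varepsilon=3-2t$ is the trivial $(\infty,1)$ case, matching the paper. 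The paper instead runs a stopping-time argument: it decomposes each level set $\{\calM_t^{\calD}f>\alpha\}$ into maximal disjoint Carleson boxes, applies H\"older on each box against $\calM^{\calD}f$ restricted to the upper tents, and sums over the scales $4^{tk}$ using the disjointness of the upper tents across levels plus a geometric series for $\sum_i|Q_{\alpha,i}|^{\frac{t-1+\varepsilon}{2t+\varepsilon-2}}$. Your argument is shorter and isolates the mechanism cleanly (the radial weight $(1-|z|)^{-2(t-1)}$ is exactly the profile of $\calM_t^{\calD}1$), and your remark on the $\varepsilon^{-1/p'}$ blow-up of the constant is consistent with the failure of the strong bound at $\varepsilon=0$ (Theorem \ref{20251222thm02}). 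What the paper's heavier level-set machinery buys is reusability: essentially the same decomposition, with the weight inserted inside the H\"older step on each box, drives the weighted endpoint characterizations in Theorems \ref{20251222thm01} and \ref{20251222thm02}, where your global pointwise-domination-plus-H\"older scheme degenerates. Both proofs quote the same standard input, namely the $L^p(\D)$ boundedness of $\calM^{\calD}=\calM_1^{\calD}$, so no extra hypotheses are hidden in your version.
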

 
\begin{proof}
Without loss of generality, we may assume $\varepsilon<3-2t$. The case when $\varepsilon=3-2t$ is obvious from the argument in \eqref{20251218eq01a}. 

Let $\alpha>0$ and denote $E_\alpha:=\left\{z \in \D: \calM_t^{\calD} f(z)>\alpha \right\}$. Observe that one can decompose $E_\alpha$ into a collection of maximal and mutually disjoint Carleson boxes $\{Q_{\alpha, i}\}_{i \ge 1}$ such that 
\begin{equation} \label{20251222eq50}
\frac{1}{|Q_{\alpha, i}|^t} \int_{Q_{\alpha, i}} |f(z)|dA(z)>\alpha, \qquad \textnormal{for each} \; \; i \ge 1.
\end{equation} 
Therefore, 
\begin{align} \label{20251218eq01}
\alpha |E_\alpha|&= \alpha \sum_{i=1}^\infty |Q_{\alpha, i}|  \le \sum_{i=1}^\infty \frac{|Q_{\alpha, i}|}{|Q_{\alpha, i}|^{t-1}} \cdot \frac{1}{|Q_{\alpha, i}|}\int_{Q_{\alpha, i}}|f(z)|dA(z)  \nonumber \\
& \le \sum_{i=1}^\infty \frac{1}{|Q_{\alpha, i}|^{t-1}} \int_{Q_{\alpha, i}^{\textnormal{up}}} \calM^{\calD}f(z) dA(z) \nonumber \\
& \le \sum_{i=1}^\infty \frac{1}{|Q_{\alpha, i}|^{t-1}} \left(\int_{Q^{\textnormal{up}}_{\alpha, i}} \left|\calM^{\calD} f(z) \right|^{\frac{1}{3-2t-\varepsilon}} dA(z) \right)^{3-2t-\varepsilon} \left(\int_{Q_{\alpha, i}} dA(z) \right)^{2t+\varepsilon-2} \nonumber  \\
&= \sum_{i=1}^\infty |Q_{\alpha, i}|^{t-1+\varepsilon}\left(\int_{Q^{\textnormal{up}}_{\alpha, i}} \left|\calM^{\calD} f(z) \right|^{\frac{1}{3-2t-\varepsilon}} dA(z) \right)^{3-2t-\varepsilon} \nonumber  \\
& \le \left(\sum_{i=1}^\infty \int_{Q^{\textnormal{up}}_{\alpha, i}} \left|\calM^{\calD} f(z) \right|^{\frac{1}{3-2t-\varepsilon}} dA(z)  \right)^{3-2t-\varepsilon} \cdot \left( \sum_{i=1}^\infty |Q_{\alpha, i}|^{\frac{t-1+\varepsilon}{2t+\varepsilon-2}} \right)^{2t+\varepsilon-2},
\end{align}
where $\calM^{\calD}=\calM_1^{\calD}$ is the standard maximal operator over dyadic Carleson tents, and in the last estimate, we have used the fact that $\{Q_{\alpha, i}\}_{i \ge 1}$ are mutually disjoint.

\vspace{0.1cm}

Now for any $f \in L^{\frac{1}{3-2t-\varepsilon}}(\D)$, we have to estimate $\left\|\calM_t^{\calD}f \right\|_{L^1(\D)}$. Write
\begin{equation} \label{20260115eq01}
\int_{\D}\calM_t^{\calD}f(z)  dA(z)
=\sum_{\ell \in \Z} \int_{E_{4^{t\ell}} \backslash E_{4^{t(\ell+1)}}}   \calM_t^{\calD}f(z)dA(z) \lesssim \sum_{\ell \in \Z} 4^{t\ell}\left|E_{4^{t\ell}} \backslash E_{4^{t(\ell+1)}} \right|.
\end{equation} 
Observe that it can happen that the intersection of the two sets of Carleson boxes $\{Q_{4^{t\ell}, i}\}_{i \ge 1}$ and $\{Q_{4^{t(\ell+1)}, i}\}_{i \ge 1}$ is not empty. Hence, for each $\ell \in \Z$, define the disjoint union of Carleson boxes
\begin{equation} \label{20260115eq30}
\{\widetilde{Q}_{4^{t\ell}, i}\}_{i \ge 1}:=\{Q_{4^{t\ell}, i}\}_{i \ge 1} \backslash \{Q_{4^{t(\ell+1)}, i}\}_{i \ge 1}.
\end{equation} 
Note that 
\begin{enumerate} 
\item [$\bullet$] $E_{4^{t\ell}} \backslash E_{4^{t(\ell+1)}} \subseteq \bigcup_{i \ge 1}\widetilde{Q}_{4^{t\ell}, i} $.
\item [$\bullet$] $\{\widetilde{Q}^{\textrm{up}}_{4^{t\ell}, i}\}_{\ell \in \Z, \; i \ge 1}$ are mutually disjoint. 
\end{enumerate} 
Using now \eqref{20251218eq01} with $E_{\alpha}$ replaced by $E_{4^{t\ell}} \backslash E_{4^{t(\ell+1)}}$, and $\{Q_{\alpha, i}\}_{i \ge 1}$ replaced by $\{\widetilde{Q}_{4^{t\ell}, i}\}_{i \ge 1}$, respectively, we have 
\begin{align} \label{20251221eq02}
\textrm{RHS of \eqref{20260115eq01}} 
& \le \sum_{\ell \in \Z} \left(\sum_{i=1}^\infty \int_{\widetilde{Q}^{\textnormal{up}}_{4^{t\ell}, i}} \left|\calM^{\calD} f(z) \right|^{\frac{1}{3-2t-\varepsilon}} dA(z)  \right)^{3-2t-\varepsilon} \cdot \left( \sum_{i=1}^\infty |\widetilde{Q}_{4^{t\ell}, i}|^{\frac{t-1+\varepsilon}{2t+\varepsilon-2}} \right)^{2t+\varepsilon-2} \nonumber \\
& \le \left(\sum_{\ell \in \Z } \sum_{i=1}^\infty \int_{\widetilde{Q}^{\textnormal{up}}_{4^{t\ell}, i}} \left|\calM^{\calD} f(z) \right|^{\frac{1}{3-2t-\varepsilon}} dA(z)  \right)^{3-2t-\varepsilon} \cdot \left(\sum_{\ell
 \in \Z} \sum_{i=1}^\infty |\widetilde{Q}_{4^{t\ell}, i}|^{\frac{t-1+\varepsilon}{2t+\varepsilon-2}} \right)^{2t+\varepsilon-2}.
\end{align}
Since $\{\widetilde{Q}^{\textnormal{up}}_{4^{t\ell}, i} \}_{\ell \in \Z, \; i \ge 1}$ are mutually disjoint, therefore, the first double sum in \eqref{20251221eq02} is bounded above by 
\begin{equation} \label{20251221eq03}
 \int_{\D}\left|\calM^{\calD} f(z) \right|^{\frac{1}{3-2t-\varepsilon}} dA(z)=\left\| \calM^{\calD} f \right\|^{\frac{1}{3-2t-\varepsilon}}_{L^{\frac{1}{3-2t-\varepsilon}}(\D)} \lesssim \left\|  f \right\|^{\frac{1}{3-2t-\varepsilon}}_{L^{\frac{1}{3-2t-\varepsilon}}(\D)},
\end{equation}
where in the last estimate above, we have used the boundedness of $\calM^{\calD}: L^{\frac{1}{3-2t-\varepsilon}}(\D) \to L^{\frac{1}{3-2t-\varepsilon}}(\D)$. 

We are left with estimating the second double summation in \eqref{20251221eq02}. Indeed, we have 
\begin{align*}
\sum_{\ell 
 \in \Z} \sum_{i=1}^\infty |\widetilde{Q}_{4^{t\ell}, i}|^{\frac{t-1+\varepsilon}{2t+\varepsilon-2}}&
\simeq \sum_{\ell 
 \in \Z} \sum_{i=1}^\infty |\widetilde{Q}^{\textnormal{up}}_{4^{t\ell}, i}|^{\frac{t-1+\varepsilon}{2t+\varepsilon-2}}=\sum_{\ell \in \Z} \sum_{i=1}^\infty \left|\widetilde{Q}^{\textnormal{up}}_{4^{t\ell}, i} \right| \cdot \frac{1}{  \left|\widetilde{Q}^{\textnormal{up}}_{4^{t\ell}, i} \right|^{\frac{t-1}{2t+\varepsilon-2}}} \\
&\simeq \sum_{\ell \in \Z} \sum_{i=1}^\infty \int_{\widetilde{Q}^{\textnormal{up}}_{4^{t\ell}, i}} \frac{1}{(1-|z|^2)^{\frac{2(t-1)}{2t+\varepsilon-2}}} dA(z) \lesssim \int_{\D} \frac{1}{(1-|z|^2)^{\frac{2(t-1)}{2t+\varepsilon-2}}} dA(z)\\
& \simeq \int_0^1 \frac{1}{(1-r)^{\frac{2(t-1)}{2t+\varepsilon-2}}} dr<+\infty. 
\end{align*}
The desired claim follows by combining the above estimate with \eqref{20251221eq02} and \eqref{20251221eq03}.
\end{proof}

Therefore, we derive the following result. 

\begin{prop} \label{20251228prop01}
For any $(p, q)$ belonging to the off-critical line regime, namely,
$$
(p, q) \in \left\{ \left(\frac{1}{p}, \frac{1}{q} \right) \in [0, 1]^2: \frac{1}{q}-\frac{1}{p}>2t-2 \right\},
$$
one has $\calM_t^{\calD}: L^p(\D) \to L^q(\D)$ is bounded.
\end{prop}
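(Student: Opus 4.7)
The plan is to obtain the full off-critical region by interpolating between the two strong-type endpoint bounds already established: Observation~\ref{20251219obs01}, which gives $\calM_t^{\calD}: L^\infty(\D) \to L^{1/(2t-2+\varepsilon)}(\D)$ for every $\varepsilon>0$, and Lemma~\ref{20251219lem01}, which gives $\calM_t^{\calD}: L^{1/(3-2t-\varepsilon)}(\D) \to L^1(\D)$ for every $\varepsilon \in (0, 3-2t]$. The two families of endpoints respectively trace out the left edge ($1/p = 0$) and the top edge ($1/q = 1$) of the admissible triangle, and any pair of them (with a common $\varepsilon$) lies on the line $1/q - 1/p = 2t - 2 + \varepsilon$ parallel to the critical line. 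Sweeping $\varepsilon$ over $(0, 3-2t]$ therefore covers exactly the open off-critical region.

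Concretely, given a target pair $(p,q)$ with $1/q - 1/p > 2t-2$, I would set
\[
\varepsilon := \tfrac{1}{q} - \tfrac{1}{p} - (2t-2),
\]
and observe that $\varepsilon \in (0, 3-2t]$ automatically, since $1/q \le 1$ and $1/p \ge 0$. Observation~\ref{20251219obs01} and Lemma~\ref{20251219lem01} at this particular $\varepsilon$ supply endpoint bounds at $(1/p_1, 1/q_1) = (0, \, 2t-2+\varepsilon)$ and $(1/p_2, 1/q_2) = (3-2t-\varepsilon,\, 1)$, and the target point $(1/p, 1/q)$ is the convex combination of these two with parameter $\theta = (1/p)/(3-2t-\varepsilon) \in [0,1]$.

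The second step is to invoke off-diagonal Marcinkiewicz interpolation, which applies to sublinear operators such as $\calM_t^{\calD}$ and requires only that the two target exponents differ. Since $q_1 = 1/(2t-2+\varepsilon) < 1 = q_2$ whenever $\varepsilon < 3-2t$, the hypotheses are satisfied and one concludes $\calM_t^{\calD}: L^p(\D) \to L^q(\D)$. The boundary case $\varepsilon = 3-2t$ degenerates to the single exponent pair $(p,q) = (\infty, 1)$, which is already a direct instance of Lemma~\ref{20251219lem01} and therefore requires no interpolation at all.

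I do not anticipate any serious obstacle: the non-trivial content has been absorbed into Observation~\ref{20251219obs01} and Lemma~\ref{20251219lem01}, and the remaining step is a textbook application of Marcinkiewicz. The only mild subtlety is the collapse of the two endpoints when $\varepsilon = 3-2t$, but as noted above this regime is handled in a single line by Lemma~\ref{20251219lem01}. If one wished to avoid sublinear interpolation entirely, an alternative would be to linearize $\calM_t^{\calD}$ in the standard way (fix a measurable choice of maximizing Carleson box pointwise) and apply Riesz--Thorin instead, but this extra step is unnecessary here.
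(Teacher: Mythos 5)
Your overall strategy --- interpolating Observation~\ref{20251219obs01} against Lemma~\ref{20251219lem01} along the family of parallel lines $1/q-1/p=2t-2+\varepsilon$ --- is exactly the paper's own proof (which is the same one-line interpolation). However, the specific mechanism you commit to does not close. Throughout the off-critical region one has $1/q>1/p$, i.e.\ $p>q$, and in this ``below the diagonal'' situation the off-diagonal Marcinkiewicz theorem (the very result the paper quotes elsewhere, \cite[Theorem~1.4.19]{Grafakos2014}) only yields Lorentz bounds $L^{p,r}(\D)\to L^{q,r}(\D)$ with the \emph{same} second index on both sides. To read off a strong bound you would need $r=p$ on the domain and $r=q$ on the target simultaneously; since $p>q$ the best this gives is $L^{p}(\D)=L^{p,p}(\D)\to L^{q,p}(\D)\supsetneq L^{q}(\D)$ (equivalently $L^{p,q}(\D)\to L^{q}(\D)$ with $L^{p,q}(\D)\subsetneq L^{p}(\D)$), which is strictly weaker than the claimed strong type. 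So the step ``the hypotheses are satisfied and one concludes $\calM_t^{\calD}:L^p(\D)\to L^q(\D)$'' is not justified as written, and your choice of $\varepsilon$ places the target exactly on the interpolation segment, leaving no room to absorb this loss.

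The gap is easily repaired, and you in fact name one repair while dismissing it as unnecessary: since $\calM_t^{\calD}$ is the supremum of the countably many positive linear averaging operators $f\mapsto |Q_I|^{-t}\one_{Q_I}\int_{Q_I}|f|\,dA$, one can linearize via a measurable selection over finite truncations of $\calD$, apply Riesz--Thorin to each linearization (complex interpolation gives strong $(p,q)$ bounds with no restriction $p\le q$), and pass to the supremum by monotone convergence; this is not an optional shortcut but is what actually produces the strong bound here. Alternatively, keep Marcinkiewicz and exploit the strictness of $1/q-1/p>2t-2$: interpolate along a slightly lower line, i.e.\ with some $\varepsilon'<\varepsilon$, to get the weak-type bound $\calM_t^{\calD}:L^{p}(\D)\to L^{q^*,\infty}(\D)$ where $1/q^*=1/p+2t-2+\varepsilon'<1/q$, and then conclude with the embedding $L^{q^*,\infty}(\D)\subset L^{q}(\D)$, valid because $q<q^*$ and $|\D|<\infty$. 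Either patch yields the proposition; the degenerate endpoint $(p,q)=(\infty,1)$ is, as you note, just Lemma~\ref{20251219lem01} with $\varepsilon=3-2t$, and the edge cases $1/p=0$ or $1/q=1$ are covered directly by Observation~\ref{20251219obs01} and Lemma~\ref{20251219lem01}.
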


\begin{proof}
The desired claim follows clearly by interpolating the estimates derived in Observation \ref{20251219obs01} and Lemma \ref{20251219lem01}.
\end{proof}

\subsection{Critical line estimate}

We first deal with the endpoint $\left(1/p, 1/q \right)=(0, 2t-2)$.

\begin{lem} \label{20251219lem02}
$\calM_t^{\calD}: L^\infty(\D) \to L^{\frac{1}{2t-2}, \infty}(\D)$ is bounded.
\end{lem}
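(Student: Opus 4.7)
The plan is to reduce the claim to a direct distribution-function computation for the single function $\mathcal{M}_t^{\mathcal D} 1$, exploiting the explicit formula already recorded before \eqref{20251218eq01a}. The boundedness of $\mathcal{M}_t^{\mathcal D}$ on nonnegative constants will do all the work; no Calder\'on--Zygmund or covering argument is needed here, which is natural since $L^{\infty}$ is the domain.

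\textbf{Step 1: Reduction to the constant function.} For any $f\in L^{\infty}(\D)$ and any Carleson box $Q_I$ containing $z$,
$$
\frac{1}{|Q_I|^{t}}\int_{Q_I}|f(w)|\,dA(w)\ \le\ \|f\|_{L^{\infty}(\D)}\cdot \frac{1}{|Q_I|^{t}}\int_{Q_I}\,dA(w),
$$
so taking the supremum gives the pointwise bound
$$
\mathcal{M}_t^{\mathcal D}f(z)\ \le\ \|f\|_{L^{\infty}(\D)}\,\mathcal{M}_t^{\mathcal D} 1(z),\qquad z\in\D.
$$
Consequently it suffices to prove $\mathcal{M}_t^{\mathcal D}1\in L^{\frac{1}{2t-2},\infty}(\D)$ with an absolute constant.

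\textbf{Step 2: Distribution-function estimate.} By the computation already recorded in the introduction, there is a constant $c>0$ such that
$$
\mathcal{M}_t^{\mathcal D}1(z)\ \simeq\ \frac{1}{(1-|z|^{2})^{2(t-1)}},\qquad z\in\D.
$$
For $\lambda>0$, the superlevel set
$$
E_{\lambda}:=\Bigl\{z\in\D:\ (1-|z|^{2})^{-2(t-1)}>\lambda\Bigr\}
$$
is (up to constants) an annulus $\{1-c\lambda^{-1/(2(t-1))}<|z|<1\}$ whenever $\lambda$ is large enough that $c\lambda^{-1/(2(t-1))}<1$; for smaller $\lambda$ the set is contained in $\D$ with bounded measure. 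In either case,
$$
|E_{\lambda}|\ \lesssim\ \lambda^{-\frac{1}{2(t-1)}}.
$$

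\textbf{Step 3: Matching the weak-type exponent.} With $q=\frac{1}{2t-2}$, so that $1/q=2(t-1)$, the preceding bound yields
$$
\lambda\,|E_{\lambda}|^{\,1/q}\ \lesssim\ \lambda\cdot\lambda^{-\frac{1}{2(t-1)}\cdot 2(t-1)}\ =\ 1,
$$
uniformly in $\lambda>0$. Hence $\|\mathcal{M}_t^{\mathcal D}1\|_{L^{\frac{1}{2t-2},\infty}(\D)}\lesssim 1$, which together with Step 1 yields the desired bound $\|\mathcal{M}_t^{\mathcal D}f\|_{L^{\frac{1}{2t-2},\infty}(\D)}\lesssim \|f\|_{L^{\infty}(\D)}$.

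\textbf{Main obstacle.} There is essentially no analytic obstacle in this lemma: the entire argument is driven by the explicit radial formula for $\mathcal{M}_t^{\mathcal D}1$. The only thing to be careful about is that the exponent $\frac{1}{2t-2}$ is precisely the one for which the power function $(1-|z|^{2})^{-2(t-1)}$ fails to be in $L^{\frac{1}{2t-2}}(\D)$ but still lies in weak $L^{\frac{1}{2t-2}}$; this sharpness is exactly what makes $(0,2t-2)$ a weak-type endpoint rather than a strong-type one, and it is fully captured by the annular level-set computation in Step 2.
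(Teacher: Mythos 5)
Your proposal is correct and follows essentially the same route as the paper: the paper's proof likewise bounds $\calM_t^{\calD}f(z)\le\|f\|_{L^\infty}\sup_{z\in Q_I}|Q_I|^{1-t}\simeq(1-|z|^2)^{-2(t-1)}$ and then computes the measure of the resulting annular level set to get $\alpha|E_\alpha|^{2t-2}\lesssim1$. Your Steps 1--3 are just a slightly more structured phrasing of that same pointwise-bound-plus-level-set argument.
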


\begin{proof}
Take any $\alpha>0$ and any measurable function $f$ with $\left\|f \right\|_{L^\infty(\D)}=1$. Since $\left| \calM_t^{\calD}f(z) \right|>\alpha$, we have 
$$
\alpha\le \sup_{z \in Q_I, \; I \in \calD} \frac{1}{|Q_I|^{t-1}} \simeq \frac{1}{(1-|z|^2)^{2(t-1)}}. 
$$
Therefore, 
\begin{align*}
\left| \left\{z \in \D: \; \left| \calM_t^{\calD}f(z) \right|>\alpha \right\} \right| 
& \le \left| \left\{z \in \D: \sup_{z \in Q_I, \; I \in \calD} \frac{1}{|Q_I|}>\alpha^{\frac{1}{t-1}} \right\} \right| \\
& \lesssim \left| \left\{z \in \D: \; \frac{1}{(1-|z|^2)^2}>\alpha^{\frac{1}{t-1}} \right\} \right| \\
&= \left| \left\{ z \in \D: 1-\alpha^{-\frac{1}{2(t-1)}} \le |z|^2 < 1\right\} \right| \\
& \simeq \alpha^{-\frac{1}{2(t-1)}},
\end{align*}
which gives
$$
\alpha \left| \left\{z \in \D: \; \left| \calM_t^{\calD}f(z) \right|>\alpha \right\} \right|^{2t-2} \lesssim 1. 
$$
The proof is complete. 
\end{proof}

Next, we treat the other endpoint $\left(\frac{1}{p}, \frac{1}{q} \right)=(3-2t, 1)$. 

\begin{thm} \label{20251222thm01}
Let $\omega$ be a weight on $[0,1)$ satisfying $\omega(r)\ge c>0$ for all $r\in[0,1/2)$, and let $\omega(z):=\omega(|z|)$ denote the associated radial weight on $\D$.
 Then $\calM_t^{\calD}: L^{\frac{1}{3-2t}}(\D, \omega) \to L^{1, \infty}(\D)$ is bounded if and only if
\begin{equation} \label{20251220eq13}
\sup_{k \ge 0} 2^k \int_{1-\frac{1}{2^k}}^{1-\frac{1}{2^{k+1}}} \frac{1}{\omega^{\frac{3-2t}{2t-2}}(r)}dr<+\infty. 
\end{equation} 
\end{thm}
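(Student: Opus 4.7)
The plan is to prove the two directions separately. The key exponent identity driving the entire argument is $(t-1)p' = 1/2$, where $p = 1/(3-2t)$ and $p' = 1/(2t-2)$: this pairs the hypersingular scale $|Q_I|^{t-1}$ with the arc-length scale on the torus via $|Q_I|^{1/2} \simeq |I|$, and is what allows summation over disjoint Carleson boxes to close against $|\T|<\infty$.

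For sufficiency, I would fix $\alpha > 0$ and perform a Calder\'on--Zygmund-style stopping decomposition of $E_\alpha := \{\calM_t^{\calD} f > \alpha\}$ into the maximal disjoint dyadic Carleson boxes $\{Q_{\alpha, i}\}$, each satisfying $\alpha |Q_{\alpha, i}|^t < \int_{Q_{\alpha, i}} |f|\,dA$, exactly as in \eqref{20251222eq50}. A preliminary step is to upgrade the hypothesis \eqref{20251220eq13} to the box-averaged form $\int_{Q_I} \omega^{-\gamma} dA \lesssim |Q_I|$ for every dyadic Carleson box (with $\gamma := (3-2t)/(2t-2)$); this follows by decomposing $Q_I$ into its constituent dyadic annuli, summing a geometric series in the generation index, and using the interior assumption $\omega \ge c$ on $[0,1/2)$ to handle the $k=0$ layer. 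With this in hand, H\"older's inequality on each $Q_{\alpha, i}$ with the conjugate exponents $(p, p')$, together with $1/p' = 2t-2$, yields $\alpha |Q_{\alpha, i}| \lesssim \|f\|_{L^p(Q_{\alpha, i}, \omega)}\,|Q_{\alpha, i}|^{t-1}$. Summing via a second H\"older and invoking the identity $(t-1)p' = 1/2$ gives
\[
\alpha |E_\alpha| \lesssim \|f\|_{L^p(\D, \omega)} \cdot \Bigl(\sum_i |I_{\alpha, i}|\Bigr)^{1/p'} \lesssim \|f\|_{L^p(\D, \omega)},
\]
since the underlying arcs $\{I_{\alpha, i}\}$ are pairwise disjoint in $\T$.

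For necessity, I would test the weak-type bound on the extremal family $f_k(z) := \omega(|z|)^{-\gamma}\,\one_{R_k}(z)$, where $R_k := \{z \in \D : 1-2^{-k} \le |z| < 1-2^{-(k+1)}\}$ is the $k$-th dyadic annular strip. The test is engineered so that the weighted $L^p$-norm simplifies via the self-dual identity $-\gamma p + 1 = -\gamma$, giving $\|f_k\|_{L^p(\D, \omega)}^p \simeq Y_k$, where $Y_k := \int_{1-2^{-k}}^{1-2^{-(k+1)}} \omega(r)^{-\gamma}\,dr$ is precisely the quantity controlled by \eqref{20251220eq13}. Evaluating $\calM_t^{\calD} f_k$ from below by testing against the unique generation-$k$ Carleson box containing $z$ yields $\calM_t^{\calD} f_k(z) \gtrsim 2^{k(2t-1)} Y_k$ on the full annulus $\{|z| \ge 1 - 2^{-k}\}$, whose area is $\simeq 2^{-k}$. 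Plugging into the weak-type inequality and simplifying algebraically collapses to $Y_k \lesssim 2^{-k}$, which is exactly \eqref{20251220eq13}. A standard truncation argument handles the degenerate case $Y_k = +\infty$.

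The main obstacle is not any single deep analytic step, but rather identifying the correct exponent coincidences---in particular $(t-1)p' = 1/2$ and $-\gamma p + 1 = -\gamma$---and upgrading the pointwise (in $k$) hypothesis \eqref{20251220eq13} to a box-averaged estimate valid for \emph{every} dyadic Carleson box. Once these are in place, both directions reduce to careful bookkeeping: a double application of H\"older for sufficiency, and testing against an explicit extremal family for necessity.
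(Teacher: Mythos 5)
Your proposal is correct, and the overall skeleton matches the paper's: the same stopping-time decomposition of $E_\alpha$ into maximal disjoint dyadic Carleson boxes, H\"older with the conjugate pair $\bigl(\tfrac1{3-2t},\tfrac1{2t-2}\bigr)$, disjointness of the underlying arcs via $(t-1)p'=\tfrac12$, and in the necessity direction exactly the same extremal functions $f_k=\omega^{-\gamma}\one_{D_k}$ with $\gamma=\tfrac{3-2t}{2t-2}$, the same lower bound $\calM_t^{\calD}f_k\gtrsim 2^{k(2t-1)}\int_{1-2^{-k}}^{1-2^{-k-1}}\omega^{-\gamma}\,dr$ on the generation-$k$ annulus, and the same algebra collapsing to $2^k\int_{1-2^{-k}}^{1-2^{-k-1}}\omega^{-\gamma}\,dr\lesssim 1$.

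The one genuine (and worthwhile) difference is in how you feed the hypothesis \eqref{20251220eq13} into the sufficiency argument. The paper keeps the condition at the level of radial shells: inside each stopping box $Q_{I_i}$ it decomposes into the upper-half tents $Q_J^{\mathrm{up}}$ of all dyadic $J\subseteq I_i$, applies H\"older tent by tent, and then runs a second H\"older over the double sum in $(i,J)$, distributing the factor $|Q_{I_i}|^{-1/2}$ and summing a geometric series in the generation gap at the very end. You instead upgrade \eqref{20251220eq13} once and for all to the box-averaged estimate $\int_{Q_I}\omega^{-\gamma}\,dA\lesssim |Q_I|$ for every dyadic Carleson box (the geometric series over the radial shells inside $Q_I$, with the interior bound $\omega\ge c$ covering the $k=0$ layer), which is a one-sided B\'ekoll\'e--Bonami-type reformulation of the hypothesis; after that a single H\"older per stopping box gives $\alpha|Q_{\alpha,i}|\lesssim\|f\|_{L^p(Q_{\alpha,i},\omega)}|Q_{\alpha,i}|^{t-1}$, and one more H\"older over $i$ with $(t-1)p'=\tfrac12$ and $\sum_i|I_{\alpha,i}|\le|\T|$ closes the bound. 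Both implementations are correct; yours is slightly more modular and isolates the role of the weight condition cleanly, while the paper's tent-level bookkeeping is the form it reuses almost verbatim in the strong-type characterization (Theorem \ref{20251222thm02}). Your remark on truncating when $\int_{1-2^{-k}}^{1-2^{-k-1}}\omega^{-\gamma}\,dr=+\infty$ is a small hygiene point the paper leaves implicit.
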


\begin{proof}
{\bf Sufficiency.} Let $\alpha>0$ and denote $E:=\left\{z \in \D: \calM_t^{\calD}f(z)>\alpha \right\}$. As usual, we decompose $E$ into a union of maximal and mutually disjoint Carleson boxes $\{Q_i\}_{i \ge 1}$ satisfying $Q_i=Q_{I_i}$ for $I_i \in \calD$ and 
$$
\frac{1}{|Q_i|^t} \int_{Q_i} |f(z)|dA(z)>\alpha \qquad \textnormal{for each} \; \; i \ge 1.
$$
This gives
\begin{align} \label{20251220eq14}
 \alpha |E|&
 =\alpha \sum_{i=1}^\infty |Q_i| \le \sum_{i=1}^\infty \frac{1}{|Q_i|^{t-1}} \int_{Q_i} |f(z)|dA(z) \nonumber \\
 &=\sum_{i=1}^\infty \frac{1}{|Q_i|^{t-1}} \left( \sum_{J \subseteq I_i, \; J \; \textnormal{dyadic}}  \int_{Q_J^{\textnormal{up}}} |f(z)| \omega^{3-2t}(z) \cdot \frac{1}{\omega^{3-2t}(z)} dA(z) \right) \nonumber \\
 & \le \sum_{i=1}^\infty \frac{1}{|Q_i|^{t-1}} \sum_{J \subseteq I_i, \; J \; \textnormal{dyadic}} \left(\int_{Q_J^{\textnormal{up}}}|f(z)|^{\frac{1}{3-2t}} \omega(z)dA(z) \right)^{3-2t} \left(\int_{Q_J^{\textnormal{up}}} \frac{dA(z)}{\omega^{\frac{3-2t}{2t-2}}(z)} \right)^{2t-2} \nonumber \\
 & \le \left(\sum_{i=1}^\infty \sum_{J \subseteq I_i, \; J \; \textnormal{dyadic}} \int_{Q_J^{\textnormal{up}}} |f(z)|^{\frac{1}{3-2t}} \omega(z) dA(z) \right)^{3-2t} \left( \sum_{i=1}^\infty \sum_{J \subseteq I_i, \; J \; \textrm{dyadic}} \frac{1}{|Q_i|^{\frac{1}{2}}} \int_{Q_J^{\textnormal{up}}} \frac{dA(z)}{\omega^{\frac{3-2t}{2t-2}}(z)} \right)^{2t-2} \nonumber \\
 &\le \left\|f \right\|_{L^{\frac{1}{3-2t}}(\D, \omega)}  \cdot \left( \sum_{i=1}^\infty \sum_{J \subseteq I_i, \; J \; \textrm{dyadic}} \frac{1}{|Q_i|^{\frac{1}{2}}} \int_{Q_J^{\textnormal{up}}} \frac{dA(z)}{\omega^{\frac{3-2t}{2t-2}}(z)} \right)^{2t-2} 
\end{align}
For the double summation in the above estimate, we have\footnote{Here and henceforth, we normalize the length of $\T$ so that $|\T|=1$. In particular, any dyadic descendant of $\T$ has length $2^{-k}$ for some $k \ge 0$.}
\begin{align} \label{20251220eq11}
    \sum_{i=1}^\infty \sum_{J \subseteq I_i, \; J \; \textrm{dyadic}} \frac{1}{|Q_i|^{\frac{1}{2}}} \int_{Q_J^{\textnormal{up}}} \frac{dA(z)}{\omega^{\frac{3-2t}{2t-2}}(z)} 
    &\simeq \sum_{i=1}^\infty \sum_{k \ge 0} \sum_{\substack{J \subseteq I_i, \; J \; \textrm{dyadic} \\ |J|=2^{-k}|I_i|}} \frac{1}{2^k |Q_J^{\textnormal{up}}|^{\frac{1}{2}}} \int_{Q_J^{\textnormal{up}}} \frac{dA(z)}{\omega^{\frac{3-2t}{2t-2}}(z)} \nonumber \\
    & \simeq \sum_{k \ge 0} \frac{1}{2^k} \left( \sum_{i=1}^\infty  \sum_{\substack{J \subseteq I_i, \; J \; \textrm{dyadic} \\ |J|=2^{-k}|I_i|}} \int_{Q_J^{\textnormal{up}}} \frac{dA(z)}{(1-|z|^2)\omega^{\frac{3-2t}{2t-2}}(z)} \right).
\end{align}
Using now the assumption that $\omega$ is radial and \eqref{20251220eq13}, we have 
\begin{align*}
\int_{Q_J^{\textnormal{up}}} \frac{dA(z)}{(1-|z|^2)\omega^{\frac{3-2t}{2t-2}}(z)} 
& \simeq \frac{1}{|J|} \int_{Q_J^{\textnormal{up}}} \frac{dA(z)}{\omega^{\frac{3-2t}{2t-2}}(z)} \lesssim |J| \cdot  \frac{1}{|J|} \int_{1-|J|}^{1-\frac{|J|}{2}} \frac{dr}{\omega^{\frac{3-2t}{2t-2}}(r)} \lesssim |J|.
\end{align*}
Substituting the above estimate back to \eqref{20251220eq11}, we have 
\begin{align*}
    \textnormal{RHS of \eqref{20251220eq11}}
    & \lesssim \sum_{k \ge 0} \frac{1}{2^k} \left( \sum_{i=1}^\infty  \sum_{\substack{J \subseteq I_i, \; J \; \textrm{dyadic} \\ |J|=2^{-k}|I_i|}} |J| \right) \\
    &=\sum_{k \ge 0} \frac{1}{2^k} \left(\sum_{i=1}^{\infty} |I_i| \right)<+\infty,
\end{align*}
where in the last estimate we have used the fact that $I_i$'s are mutually disjoint. The desired weak-type estimate $\calM_t^{\calD}: L^{\frac{1}{3-2t}}(\D, \omega) \to L^{1, \infty}(\D)$ then follows by plugging the above estimate back into \eqref{20251220eq14}. 

\vspace{0.1cm}

{\bf Necessity.} For each $k \ge 0$, denote 
\begin{equation} \label{20251222eq55}
D_k:=\left\{z \in \D: 1-\frac{1}{2^k} \le |z| <1-\frac{1}{2^{k+1}} \right\},
\end{equation} 
and $f_k(z):=\omega^{-\frac{3-2t}{2t-2}}(z) \one_{D_k}(z)$. A direct computation yields
\begin{equation} \label{20251221eq01}
\left\|f_k \right\|_{L^{\frac{1}{3-2t}}(\D, \omega)}=\left(\int_{D_k} \omega^{-\frac{3-2t}{2t-2}}(z)dA(z) \right)^{3-2t} \simeq \left( \int_{1-\frac{1}{2^k}}^{1-\frac{1}{2^{k+1}}} \frac{dr}{\omega^{\frac{3-2t}{2t-2}}(r)} \right)^{3-2t}.
\end{equation} 
Next, consider $\calD_k:=\left\{I \in \calD: |I|=2^{-k} \right\}$. Then for each $w \in Q_I$ with $I \in \calD_k$, one has
\begin{align*}
\calM_t^{\calD}f_k(w)
&\ge \frac{1}{|Q_I|^t} \int_{Q_I} f_k(z)dA(z) \ge \frac{1}{|Q_I|^t} \int_{Q_I^{\textnormal{up}}}\frac{1}{\omega^{\frac{3-2t}{2t-2}}(z)}dA(z) \\
& \simeq \frac{1}{|Q_I|^t} \cdot |I| \cdot \int^{1-\frac{|I|}{2}}_{1-|I|} \frac{dr}{\omega^{\frac{3-2t}{2t-2}}(r)} \simeq 2^{k(2t-1)} \cdot \int_{1-\frac{1}{2^k}}^{1-\frac{1}{2^{k+1}}} \frac{dr}{\omega^{\frac{3-2t}{2t-2}}(r)},
\end{align*}
which implies
$$
Q_I \subseteq  \left\{w \in \D: \calM_t^{\calD}f_k(w)>\widetilde{c}2^{k(2t-1)} \cdot \int_{1-\frac{1}{2^k}}^{1-\frac{1}{2^{k+1}}} \frac{dr}{\omega^{\frac{3-2t}{2t-2}}(r)} \right\}
$$
for some absolute constant $\widetilde{c}>0$ being sufficiently small. Since $I \in \calD_k$ are mutually disjoint, we further get
\begin{equation} \label{20251222eq60}
\bigcup_{I \in \calD_k} Q_I \subseteq  \left\{w \in \D: \calM_t^{\calD}f_k(w)>\widetilde{c}2^{k(2t-1)} \cdot \int_{1-\frac{1}{2^k}}^{1-\frac{1}{2^{k+1}}} \frac{dr}{\omega^{\frac{3-2t}{2t-2}}(r)} \right\},
\end{equation} 
Therefore, by the assumption $\calM_t^{\calD}: L^{\frac{1}{3-2t}}(\D, \omega) \to L^{1, \infty}(\D)$ and \eqref{20251221eq01}, we have 
\begin{align*}
2^{-k} & \simeq \left| \bigcup_{I \in \calD_k} Q_I \right| \lesssim \left|  \left\{w \in \D: \calM_t^{\calD}f_k(w)>\widetilde{c}2^{k(2t-1)} \cdot \int_{1-\frac{1}{2^k}}^{1-\frac{1}{2^{k+1}}} \frac{dr}{\omega^{\frac{3-2t}{2t-2}}(r)} \right\} \right| \\
& \lesssim \frac{1}{2^{k(2t-1)} \cdot \int_{1-\frac{1}{2^k}}^{1-\frac{1}{2^{k+1}}} \frac{dr}{\omega^{\frac{3-2t}{2t-2}}(r)}} \cdot \left\|f_k \right\|_{L^{\frac{1}{3-2t}}(\D, \omega)}  \\
& \lesssim \frac{1}{2^{k(2t-1)} \cdot \int_{1-\frac{1}{2^k}}^{1-\frac{1}{2^{k+1}}} \frac{dr}{\omega^{\frac{3-2t}{2t-2}}(r)}}  \left( \int_{1-\frac{1}{2^k}}^{1-\frac{1}{2^{k+1}}} \frac{dr}{\omega^{\frac{3-2t}{2t-2}}(r)} \right)^{3-2t} \\
&=\frac{1}{2^{k(2t-1)}} \cdot \left( \int_{1-\frac{1}{2^k}}^{1-\frac{1}{2^{k+1}}} \frac{dr}{\omega^{\frac{3-2t}{2t-2}}(r)} \right)^{2-2t},
\end{align*}
which gives 
$$
2^k \int_{1-\frac{1}{2^k}}^{1-\frac{1}{2^{k+1}}} \frac{dr}{\omega^{\frac{3-2t}{2t-2}}(r)} \lesssim 1.
$$
The proof of the necessity is complete. 
\end{proof}

\begin{rem}
In Theorem \ref{20251222thm01}, the assumption $\omega(r)\ge c>0$ for
$r\in\left[0,\frac12\right)$ is used only to ensure that $
\int_{0}^{1/2}\omega(r)^{-\frac{3-2t}{2t-2}}\,dr<\infty$. This is a minor technical requirement, since the relevant (and more delicate) behavior of the weight occurs near the boundary, as $r\to 1^{-}$.
\end{rem}

As a direct application of Theorem \ref{20251222thm01} with $\omega \equiv 1$, we have $\calM_t^{\calD}: L^{\frac{1}{3-2t}}(\D) \to L^{1, \infty}$ is bounded. Interpolating this with Lemma \ref{20251219lem02}, we derive the following estimates on the critical line.

\begin{cor} \label{20251226lem10}
For any $(p, q)$ belonging to the critical line regime, namely,
$$
 (p, q) \in \left\{ \left(\frac{1}{p}, \frac{1}{q} \right) \in (0, 1)^2: \frac{1}{q}-\frac{1}{p}=2t-2 \right\},
$$
then for every $0<r<\infty$, the operator $\calM_t^{\calD}$ extends a bounded operator from $L^{p,r}(\D)$ to $L^{q,r}(\D)$. In particular, $\calM_t^{\calD}: L^p(\D) \to L^{q, \infty}(\D)$ is bounded.  
\end{cor}

\begin{proof}
The first assertion follows from the off-diagonal Marcinkiewicz interpolation theorem \cite[Theorem~1.4.19]{Grafakos2014}. The second follows from by letting $r=\infty$ and the fact that $L^p(\D) \subset L^{p, \infty}(\D)$. 
\end{proof}
A natural question arising from Theorem~\ref{20251222thm01} is whether $\calM_t^{\calD}: L^{\frac{1}{3-2t}}(\D)\to L^1(\D)$ is bounded or not. Here, again, we would like to formulate this result in the setting of radial weights. For this purpose, we recall the notion of \emph{B\'ekoll\'e--Bonami weights}. Let \(1<l<\infty\). We say that a weight \(\omega\) on \(\D\) belongs to the \emph{B\'ekoll\'e--Bonami class \({\bf B}_l\)} if
\begin{equation} \label{20251228defn21}
[\omega]_{{\bf B}_l}
:=\sup_{I\subseteq\T}
\left(\frac{1}{|Q_I|}\int_{Q_I}\omega\, dA\right)
\left(\frac{1}{|Q_I|}\int_{Q_I}\omega^{-\frac{1}{l-1}}\, dA\right)^{l-1}
<\infty,
\end{equation} 
In particular, if \(\omega\) is radial, then the above condition is equivalent to
\[
[\omega]_{{\bf B}_l}
 \simeq \sup_{0<h<1}
\left(\frac{1}{h}\int_{1-h}^1 \omega(r)\,dr\right)
\left(\frac{1}{h}\int_{1-h}^1 \omega(r)^{-\frac{1}{l-1}}\,dr\right)^{l-1}<+\infty
\]

An important reason to consider the B\'ekoll\'e--Bonami weights is that they provide a necessary and sufficient condition for 
\begin{enumerate} 
\item [$\bullet$] the standard Hardy--Littlewood maximal operator over all Carleson tents \(\calM\) to extend to a bounded operator on \(L^l(\D,\omega)\), with operator norm of magnitude $[\omega]_{{\bf B}_l}^{\frac{1}{l-1}}$ (see, e.g.,  \cite{APM2019, PR2013}). 
\item [$\bullet$] the Bergman projection $\mathcal P$ acting as a bounded operator on $L^l(\D, \omega)$ (see, e.g.,  \cite{BB1978, PR2013, RTW17}). 
\end{enumerate}

\begin{thm} \label{20251222thm02}
Let $\omega$ be a radial weight that satisfies the assumption of Theorem \ref{20251222thm01}. Let further, $\omega \in {\bf B}_{\frac{1}{3-2t}}$. Then $\calM_t^{\calD}: L^{\frac{1}{3-2t}}(\D, \omega) \to L^1(\D)$ is bounded if and only if 
$$
\sum_{k \ge 0} 2^k \int_{1-\frac{1}{2^k}}^{1-\frac{1}{2^{k+1}}} \frac{1}{\omega^{\frac{3-2t}{2t-2}}(r)}dr<+\infty.
$$
In particular, $\calM_t^{\calD}$ maps $L^{\frac{1}{3-2t}}(\D)$ unboundedly into $L^1(\D)$. 
\end{thm}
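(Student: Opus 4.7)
The plan is to prove both implications through a single annular decomposition tailored to the radial weight. With the notation of \eqref{20251222eq55}, set $A_m := \int_{1-2^{-m}}^{1-2^{-(m+1)}} \omega(r)^{-(3-2t)/(2t-2)}\, dr$, $p = 1/(3-2t)$, and $p' = 1/(2t-2)$, so that $(2t-2)p' = 1$.

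For sufficiency, I would start from the crude pointwise bound
\[
\calM_t^{\calD} f(w) \le \sum_{j=0}^{m} \frac{1}{|Q_{I^j(w)}|^{t}}\int_{Q_{I^j(w)}} |f|\,dA, \qquad w \in D_m,
\]
where $I^j(w) \in \calD_j$ is the unique generation-$j$ arc whose Carleson box contains $w$ (the geometric constraint $|w| \ge 1 - |I^j(w)|$ restricts $j$ to $\{0,\ldots,m\}$). Integrating over $D_m$, using $|Q_I| \simeq 2^{-2j}$ for $I \in \calD_j$ together with $\bigsqcup_{I \in \calD_j} Q_I = \{|z| \ge 1-2^{-j}\}$, and then summing in $m$ with a Fubini-type rearrangement (the factor $2^{j(2t-1)-m}$ collapses cleanly because $2t-2>0$), reduces matters to
\[
\int_{\D} \calM_t^{\calD} f\,dA \;\lesssim\; \sum_{i \ge 0} 2^{i(2t-2)} \int_{D_i} |f|\,dA.
\]
A H\"older estimate on each annulus with exponents $(p,p')$ yields $\int_{D_i}|f|\,dA \lesssim \|f\|_{L^p(D_i,\omega)} A_i^{1/p'}$ (the radiality of $\omega$ reducing the weighted integral to $A_i$), and a second H\"older on the outer sum, together with the disjointness of the $D_i$'s, produces
\[
\int_{\D} \calM_t^{\calD} f\,dA \;\lesssim\; \Bigl(\sum_i 2^{i(2t-2)p'} A_i\Bigr)^{1/p'}\|f\|_{L^p(\D,\omega)}=\Bigl(\sum_i 2^i A_i\Bigr)^{1/p'}\|f\|_{L^p(\D,\omega)},
\]
giving the claimed bound under the assumed summability.

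For necessity, I plan to test against the extremizers from the proof of Theorem~\ref{20251222thm01}, namely $f_k := \omega^{-(3-2t)/(2t-2)}\one_{D_k}$, by forming $f = \sum_{k \ge 0} c_k f_k$ with nonnegative $c_k$. Disjointness of the $D_k$'s gives $\|f\|_{L^p(\D,\omega)}^p \simeq \sum_k c_k^p A_k$. For $w \in D_m$, choosing $I = I^m(w) \in \calD_m$ in the supremum and observing that $Q_I \cap D_k$ is empty for $k<m$ and has area $\simeq 2^{-m-k}$ for $k \ge m$, one obtains
\[
\calM_t^{\calD} f(w) \;\gtrsim\; 2^{m(2t-1)} \sum_{k \ge m} c_k A_k,
\]
and integrating over $D_m$ ($|D_m|\simeq 2^{-m}$) and swapping the order of summation (again using $2t-2 > 0$ to identify the dominant scale) yields $\|\calM_t^{\calD} f\|_{L^1(\D)} \gtrsim \sum_k c_k A_k \cdot 2^{k(2t-2)}$. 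The assumed strong-type estimate therefore forces $\sum_k c_k A_k\,2^{k(2t-2)} \lesssim \bigl(\sum_k c_k^p A_k\bigr)^{1/p}$ for every nonnegative finite sequence $(c_k)$. The substitution $a_k = c_k A_k^{1/p}$ rephrases this as a weighted $\ell^p$--$\ell^1$ estimate, whose sharp constant is given by $\ell^{p'}$-duality as $\bigl(\sum_k A_k^{p'(1-1/p)}\,2^{k(2t-2)p'}\bigr)^{1/p'} = \bigl(\sum_k 2^k A_k\bigr)^{1/p'}$, exactly the claimed summability.

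The main obstacle is the sharpness of the sufficiency computation: the $\sup \le \sum$ step is a priori quite lossy, but the subsequent Fubini-type rearrangement reveals that, once integrated against $dA$, the dominant contribution comes from the diagonal scale $j \simeq m$, which is precisely the scale saturated by the extremal test functions in necessity. The ``in particular'' statement is then immediate: for $\omega \equiv 1 \in {\bf B}_{1/(3-2t)}$ one has $A_k \simeq 2^{-k}$, hence $\sum_k 2^k A_k = +\infty$, so $\calM_t^{\calD}$ cannot map $L^{1/(3-2t)}(\D)$ boundedly into $L^1(\D)$.
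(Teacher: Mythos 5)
Your proposal is correct, but it follows a genuinely different route from the paper on both halves. For sufficiency, the paper decomposes the level sets of $\calM_t^{\calD}f$ into maximal disjoint Carleson boxes, applies H\"older on upper tents, and crucially invokes $\omega\in{\bf B}_{\frac{1}{3-2t}}$ to control $\|\calM^{\calD}f\|_{L^{1/(3-2t)}(\D,\omega)}$ by $\|f\|_{L^{1/(3-2t)}(\D,\omega)}$; you instead dominate the supremum pointwise by the sum over the (at most $m+1$) scales available on the annulus $D_m$, integrate, rearrange, and apply H\"older twice, which never uses the B\'ekoll\'e--Bonami hypothesis at all --- so your argument actually shows that the sufficiency half holds for every radial weight satisfying only the normalization near the origin, a mild strengthening of the paper's statement. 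For necessity, the paper tests against the single family $f_N=\sum_{k=0}^{N}2^{k(3-2t)}\omega^{-\frac{3-2t}{2t-2}}\one_{D_k}$ and closes with a self-absorption step (dividing by the partial sum $\sum_{k\le N}2^kA_k$, whose finiteness is where local integrability of $\omega^{-\frac{3-2t}{2t-2}}$, guaranteed by ${\bf B}_{\frac{1}{3-2t}}$, is used), whereas you allow arbitrary nonnegative coefficients $c_k$ and conclude by $\ell^p$--$\ell^{p'}$ duality, which extracts the summability condition directly and with the sharp constant, avoiding the absorption trick. The only caveat in your necessity argument is that the test functions $\sum_k c_kf_k$ lie in $L^{1/(3-2t)}(\D,\omega)$ only when each $A_k<\infty$; under the stated hypothesis this is automatic (taking $I=\T$ in \eqref{20251228defn21} gives $\int_{\D}\omega^{-\frac{3-2t}{2t-2}}\,dA<\infty$), and otherwise a routine truncation of $\omega^{-\frac{3-2t}{2t-2}}$ would repair it, so there is no genuine gap. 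Your geometric computations ($|Q_I\cap D_k|\simeq 2^{-m-k}$ for $I\in\calD_m$, $k\ge m$, and the collapse of the exponents via $(2t-2)p'=1$) are all sound, and the ``in particular'' statement follows exactly as you say.
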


\begin{proof}
We prove Theorem \ref{20251222thm02} by adapting the ideas from the proof of Lemma \ref{20251219lem01} and Theorem \ref{20251222thm01}. 

\vspace{0.1cm}

\noindent {\bf Sufficiency.} Again, for any $\alpha>0$, we denote $E_\alpha:=\left\{z \in \D: \calM_t^{\calD}f(z)>\alpha\right\}$. Then as usual, one can decompose $E_\alpha$ into disjoint union of maximal Carleson boxes $\{Q_{\alpha, i}\}_{i \ge 1}$ enjoying \eqref{20251222eq50}. Then following the argument in \eqref{20251218eq01} with applying H\"older for the conjugate pair $(l, l')=\left(\frac{1}{3-2t}, \frac{1}{2t-2} \right)$, we have 
\begin{align} \label{20260115eq43}
\alpha |E_\alpha| &
\lesssim \sum_{i=1}^\infty \frac{1}{|Q_{\alpha, i}|^{t-1}} \int_{Q_{\alpha, i}^{\textnormal{up}}} \calM^{\calD}f(z) \omega^{3-2t}(z) \cdot  \frac{1}{\omega^{3-2t}(z)} dA(z)  \nonumber \\ 
&\lesssim \left(\sum_{i=1}^\infty \int_{Q_{\alpha, i}^{\textnormal{up}}} \left| \calM^{\calD} f(z) \right|^{\frac{1}{3-2t}} \omega(z) dA(z) \right)^{3-2t} \left(\sum_{i=1}^\infty \int_{Q_{\alpha, i}^{\textnormal{up}}} \frac{dA(z)}{(1-|z|^2) \omega^{\frac{3-2t}{2t-2}}(z)} \right)^{2t-2}. 
\end{align}
To estimate $\left\|\calM_t^{\calD} f \right\|_{L^1(\D)}$, write 
\begin{equation} \label{20260115eq40}
\int_{\D} \calM_t^{\calD}f(z) dA(z)
=\sum_{\ell \in \Z} \int_{E_{4^{t\ell}} \backslash E_{4^{t(\ell+1)}}} \calM_t^{\calD}f(z)dA(z) \lesssim \sum_{\ell \in \Z} 4^{t\ell} \left|E_{4^{t\ell}} \backslash E_{4^{t(\ell+1)}} \right|.
\end{equation} 
Similarly as in \eqref{20260115eq30}, let
$$
\left\{\widetilde{Q}_{4^{t\ell}, i} \right\}_{i \ge 1}:=\left\{Q_{4^{t\ell}, i} \right\}_{i \ge 1} \backslash \left\{Q_{4^{t(\ell+1)}, i} \right\}_{i \ge 1}.
$$
Then using \eqref{20260115eq43} with $E_{\alpha}$ replaced by $E_{4^{t\ell}} \backslash E_{4^{t(\ell+1)}}$, and $\{Q_{\alpha, i}\}_{i \ge 1}$ replaced by $\{\widetilde{Q}_{4^{t\ell}, i}\}_{i \ge 1}$, respectively, we have  
\begin{align} \label{20251222eq53}
& \textrm{RHS of \eqref{20260115eq40}} \nonumber \\
& \le \sum_{\ell \in \Z} \left(\sum_{i=1}^\infty \int_{\widetilde{Q}_{4^{t\ell} i}^{\textnormal{up}}} \left| \calM^{\calD} f(z) \right|^{\frac{1}{3-2t}} \omega(z) dA(z) \right)^{3-2t} \left(\sum_{i=1}^\infty \int_{\widetilde{Q}_{4^{t\ell}, i}^{\textnormal{up}}} \frac{dA(z)}{(1-|z|^2) \omega^{\frac{3-2t}{2t-2}}(z)} \right)^{2t-2} \nonumber\\
& \lesssim \left\|\calM^{\calD}f \right\|_{L^{\frac{1}{3-2t}}(\D, \; \omega)} \cdot \left(\sum_{\ell \in \Z} \sum_{i=1}^\infty \int_{\widetilde{Q}_{4^{t\ell}, i}^{\textnormal{up}}} \frac{dA(z)}{(1-|z|^2) \omega^{\frac{3-2t}{2t-2}}(z)} \right)^{2t-2} \nonumber \\
& \lesssim [\omega]_{{\bf B}_{\frac{1}{3-2t}}}^{\frac{3-2t}{2t-2}} \cdot \left\|f \right\|_{L^{\frac{1}{3-2t}}(\D, \; \omega)} \cdot \left(\sum_{\ell \in \Z} \sum_{i=1}^\infty \int_{\widetilde{Q}_{4^{t\ell}, i}^{\textnormal{up}}} \frac{dA(z)}{(1-|z|^2) \omega^{\frac{3-2t}{2t-2}}(z)} \right)^{2t-2},
\end{align}
where in the last estimate, we used the assumption $\omega \in {\bf B}_{\frac{1}{3-2t}}$. Finally for the double summation in \eqref{20251222eq53}, using the fact that $\{\widetilde{Q}_{4^{t\ell}, i}^{\textnormal{up}}\}_{\ell \in \Z, \; i \ge 1}$ are mutually disjoint, we have 
\begin{align*}
\sum_{\ell \in \Z} \sum_{i=1}^\infty \int_{\widetilde{Q}_{4^{t\ell}, i}^{\textnormal{up}}} \frac{dA(z)}{(1-|z|^2) \omega^{\frac{3-2t}{2t-2}}(z)}
& \lesssim \int_{\D} \frac{1}{(1-|z|^2)\omega^{\frac{3-2t}{2t-2}}(z)} dA(z)\\
& \simeq \int_0^1 \frac{rdr}{(1-r^2)\omega^{\frac{3-2t}{2t-2}}(r)} \\ 
&\simeq \sum_{k \ge 0} 2^k \int_{1-\frac{1}{2^k}}^{1-\frac{1}{2^{k+1}}} \frac{1}{\omega^{\frac{3-2t}{2t-2}}(r)}dr<+\infty. 
\end{align*}
The sufficiency is therefore proved. 

\vspace{0.1cm}

\noindent {\bf Necessity.} Assume $\calM_t^{\calD}: L^{\frac{1}{3-2t}}(\D, \omega) \to L^1(\D)$ is bounded. Let $N \in \N$ be sufficiently large, and take the test function 
$$
f_N(z):=\sum_{k=0}^N 2^{k(3-2t)} \omega^{-\frac{3t-2}{2t-2}}(z) \one_{D_k}(z), 
$$
where $D_k$ is the annulus defined as in \eqref{20251222eq55}. 

On one hand side, recall that $\{D_k\}_{k \ge 0}$ are mutually disjoint, we have 
\begin{align} \label{20251222eq70}
\left\|f_N \right\|_{L^{\frac{1}{3-2t}}(\D, \omega)}
&=\left( \int_\D \left| \sum_{k=0}^N 2^{k(3-2t)} \omega^{-\frac{3t-2}{2t-2}}(z) \one_{D_k}(z) \right|^{\frac{1}{3-2t}} \omega(z) dA(z) \right)^{3-2t} \nonumber \\
&=\left( \sum_{k' \ge 0} \int_{D_{k'}} \left| \sum_{k=0}^N  2^{k(3-2t)} \omega^{-\frac{3t-2}{2t-2}}(z) \one_{D_k}(z) \right|^{\frac{1}{3-2t}} \omega(z) dA(z) \right)^{3-2t} \nonumber \\
&=\left( \sum_{k=0}^N \int_{D_k} \left|  2^{k(3-2t)} \omega^{-\frac{3t-2}{2t-2}}(z) \one_{D_k}(z) \right|^{\frac{1}{3-2t}} \omega(z) dA(z) \right)^{3-2t}  \nonumber \\
&= \left(\sum_{k=0}^N  2^k \int_{D_k} \omega^{-\frac{3-2t}{2t-2}}(z)dA(z) \right)^{3-2t} \simeq \left(\sum_{k=0}^N 2^k \int_{1-\frac{1}{2^k}}^{1-\frac{1}{2^{k+1}}} \frac{dr}{\omega^{\frac{3-2t}{2t-2}}(r)} \right)^{3-2t}.
\end{align} 
On the other hand side, by the argument in \eqref{20251222eq60}, we see that for $0 \le k \le N$,  
$$
\calM_t^{\calD} f_N(z) \gtrsim 2^{k(3-2t)} \cdot 2^{k(2t-1)} \cdot \int_{1-\frac{1}{2^k}}^{1-\frac{1}{2^{k+1}}} \frac{dr}{\omega^{\frac{3-2t}{2t-2}}(r)}=2^{2k} \int_{1-\frac{1}{2^k}}^{1-\frac{1}{2^{k+1}}} \frac{dr}{\omega^{\frac{3-2t}{2t-2}}(r)}, 
$$
and hence 
\begin{align} \label{20251222eq71}
\left\|\calM_t^{\calD} f_N \right\|_{L^1(\D)}
& \ge \sum_{k=0}^N \int_{D_k} \calM_t^{\calD} f_N(z)dA(z) \nonumber \\
& \gtrsim \sum_{k=0}^N 2^{-k} \cdot 2^{2k} \int_{1-\frac{1}{2^k}}^{1-\frac{1}{2^{k+1}}} \frac{dr}{\omega^{\frac{3-2t}{2t-2}}(r)} \nonumber \\
&= \sum_{k=0}^N 2^{k} \int_{1-\frac{1}{2^k}}^{1-\frac{1}{2^{k+1}}} \frac{dr}{\omega^{\frac{3-2t}{2t-2}}(r)}.
\end{align}
Finally, since $\left\|\calM_t^{\calD} f_N\right\|_{L^1(\D)} \lesssim \left\|f_N \right\|_{L^{\frac{1}{3-2t}}(\D, \omega)}$, using this with \eqref{20251222eq70} and \eqref{20251222eq71}, we derive that 
\begin{equation} \label{20251222eq73}
\sum_{k=0}^N 2^{k} \int_{1-\frac{1}{2^k}}^{1-\frac{1}{2^{k+1}}} \frac{dr}{\omega^{\frac{3-2t}{2t-2}}(r)} \lesssim \left(\sum_{k=0}^N 2^{k} \int_{1-\frac{1}{2^k}}^{1-\frac{1}{2^{k+1}}} \frac{dr}{\omega^{\frac{3-2t}{2t-2}}(r)} \right)^{3-2t}. 
\end{equation} 
Note that since $\omega$ is a weight (hence locally integrable), this means
$$
\sum_{k=0}^N 2^{k} \int_{1-\frac{1}{2^k}}^{1-\frac{1}{2^{k+1}}} \frac{dr}{\omega^{\frac{3-2t}{2t-2}}(r)} \simeq \int_0^{1-\frac{1}{2^{N+1}}} \frac{dr}{(1-r)\omega^{\frac{3-2t}{2t-2}}(r)}<+\infty, 
$$
and hence \eqref{20251222eq73} gives
$$
\left(\sum_{k=0}^N 2^{k} \int_{1-\frac{1}{2^k}}^{1-\frac{1}{2^{k+1}}} \frac{dr}{\omega^{\frac{3-2t}{2t-2}}(r)} \right)^{2t-2} \lesssim 1, 
$$
where the implicit constant in the above estimate is \emph{independent} of the choice of $N$. Finally, the desired necessary part follows by letting $N \to \infty$ in the above estimate and the assumption that $1<t<3/2$. 
\end{proof}

To this end, we show that the $L^p$ estimates we derived for $\calM_t^{\calD}$ are \emph{sharp}. 

\begin{prop} \label{20260131prop01}
For any $1 \le p, q \le +\infty$ with $\frac{1}{q}-\frac{1}{p}<2t-2$, $\calM_t^{\calD}$ maps $L^p(\D)$ unboundedly to $L^{q, \infty}(\D)$.
\end{prop}

\begin{proof}
To begin with, we may assume that \(p,q<+\infty\). The case \(p=+\infty\) follows by an argument similar to that in Observation \ref{20251219obs01}, while the case \(q=+\infty\) is trivial.

Take $\eps>0$ sufficiently small, such that $1/q-1/(p+\eps)<2t-2$, and let the test function $f(z):=(1-|z|^2)^{-\frac{1}{p+\eps}}, \; z \in \D$. On one hand, 
$$
\left\|f \right\|_{L^p(\D)}^p \simeq \int_0^1 \frac{dr}{(1-r)^{\frac{p}{p+\eps}}}<+\infty.
$$
On the other hand, 
\begin{align*}
\calM_t^{\calD} f(z)
&= \sup_{z \in Q_I, \; I \in \calD} \frac{1}{|Q_I|^t} \int_{Q_I} \frac{1}{(1-|w|^2)^{\frac{1}{p+\eps}}} dA(w)  \\
& \ge \sup_{z \in Q_I, \; I \in \calD} \frac{1}{|Q_I|^t} \int_{Q_I^{\textrm{up}}} \frac{1}{(1-|w|^2)^{\frac{1}{p+\eps}}} dA(w)  \\
& \simeq \sup_{z \in Q_I, \; I \in \calD} \frac{1}{|Q_I|^{t-1+\frac{1}{2(p+\eps)}}}  \simeq \frac{1}{(1-|z|^2)^{2(t-1)+\frac{1}{p+\eps}}}, 
\end{align*}
which gives 
$$
\left| \left\{z \in \D: \calM_t^{\calD}f(z) \gtrsim \lambda \right\} \right| \gtrsim \min \left\{1, \lambda^{-\frac{1}{q\left(2t-2+\frac{1}{p+\eps}\right)}} \right\}.
$$
Hence,
$$
\left\|\calM_t^{\calD} f \right\|_{L^{q, \infty}(\D)}=\sup_{\lambda>0} \lambda \left| \left\{z \in \D: \calM_t^{\calD}f(z) \gtrsim \lambda \right\} \right|^{\frac{1}{q}} \gtrsim \sup_{\lambda>0} \lambda \cdot \min \left\{1, \lambda^{-\frac{1}{q\left(2t-2+\frac{1}{p+\eps}\right)}} \right\}=+\infty,
$$
where in the last estimate above, we used the fact that $q \left(2t-2+\frac{1}{p+\eps} \right)>1$.
\end{proof}

\subsection{A two-weight estimate for $\mathcal{M}_t^{\mathcal{D}}$}

The last part of this section concerns weighted estimates for $\mathcal{M}_t^{\mathcal{D}}, \; t>0$ in the (hyper-singular) regime $\{1 \le p, q \le +\infty: p>q\}$. We start with some definition. 

\begin{defn} \label{20260119defn01}
Let $\eta$ be a weight on $\D$ and $\calD$ be a dyadic system on $\T$. We say $\eta$ belongs to the \emph{dyadic ${\bf B}_\infty$ 
class} ${\bf B}_{\infty}(\calD)$ if there exists an absolute constant $C>1$, such that for each $I \in \calD$, 
$$
\eta(Q_I) \le C \eta(Q_I^{\textrm{up}}). 
$$
\end{defn}

\begin{rem}
The ${\bf B}_\infty(\mathcal D)$ condition is a rather mild assumption, and it includes most of the well--known examples of weights on $\mathbb D$. Here are some examples. 
\begin{enumerate}
    \item Radial weights $\nu(z)=(1-|z|^2)^\alpha$, \ $\alpha>-1$.
    \item B\'ekoll\'e--Bonami ${\bf B}_p$ weights for $1<p<\infty$ (see, e.g., \cite{DMO2016}).
    \item Weights $\eta$ that satisfy both bounded hyperbolic oscillation, i.e., there exists $C_\eta>0$ such that for every arc $I\subseteq\mathbb T$,
    \[
    C_\eta^{-1}\eta(\xi)\le \eta(z)\le C_\eta\,\eta(\xi), \qquad z,\xi\in Q_I^{\mathrm{up}},
    \]
    and the Fuji--Wilson property
    \[
    \sup\left\{\frac{\int_{Q_I} M(\eta\,\one_{Q_I})}{\int_{Q_I}\eta}: I\subseteq\mathbb T\right\}<+\infty
    \]
    (see \cite[Theorem~1.7]{APM2019}).
\end{enumerate}
To this end, we refer the interested reader to the recent excellent papers \cite{APM2019,MP2025} and the references therein for a more systematic study of arbitrary B\'ekoll\'e--Bonami weights.
\end{rem}

We have the following result.

\begin{thm} \label{20260114thm01a}
Let $1\le q<p\le +\infty$ and $t>0$. Let $\mathcal D$ be a dyadic system on $\mathbb T$. 
Let $\mu$ and $\omega$ be two weights on $\mathbb D$, and set $\sigma:=\omega^{-1/(p-1)}$. 
Assume that $\mu, \sigma \in \mathbf B_\infty(\calD)$.  Then the following statements are equivalent.

\begin{enumerate}
    \item $\calM_t^{\calD}$ extends a bounded operator from $L^p(\omega, \D)$ to $L^q(\mu, \D)$;
    \item Define 
    $$
\phi(z):=\sum_{I \in \calD} \beta_I \one_{Q_I^{\textrm{up}}}(z)
$$
where
$$
\beta_I:= \frac{1}{|Q_I|^{(t-1)q}} \cdot \frac{\mu(Q_I)}{|Q_I|} \cdot \left(\frac{\sigma(Q_I)}{|Q_I|} \right)^{\frac{q}{p'}}.
    $$
    Then $\phi \in L^{\frac{p}{p-q}}(\D)$. 
\end{enumerate}
\end{thm}

\begin{proof} 
\textbf{$(2)\Longrightarrow (1)$.} \; Assume (2). For any $\alpha>0$, as usual, let $E_\alpha:=\{z \in \D: \calM_t^{\calD}f(z)>\alpha\}$. Again, write $E_\alpha$ into disjoint union of maximal Carleson boxes $\{Q_{\alpha, i}\}_{i \ge 1}$ with
$$
\frac{1}{|Q_{\alpha, i}|^t} \int_{Q_{\alpha, i}} |f(z)|dA(z)>\alpha, \qquad \textrm{for} \quad i \ge 1. 
$$
Then, using the assumption that $\sigma \in {\bf B}_\infty(\calD)$, 
\begin{align} \label{20260117eq02}
\alpha^q \mu(E_\alpha)
&=\alpha^q \sum_{i=1}^\infty \mu(Q_{\alpha, i}) \lesssim \sum_{i=1}^\infty \frac{\mu(Q_{\alpha, i})}{|Q_{\alpha, i}|^{tq}} \left(\int_{Q_{\alpha, i}} |f(z)|dA(z) \right)^q \nonumber  \\
&=\sum_{i=1}^\infty \frac{\mu(Q_{\alpha, i})}{|Q_{\alpha, i}|^{tq}} \left[ \sigma(Q_{\alpha, i}) \cdot \frac{1}{\sigma(Q_{\alpha, i})} \int_{Q_{\alpha, i}} |f(z)| \sigma^{-1}(z)\sigma(z) dA(z) \right]^q \nonumber  \\
& \lesssim \sum_{i=1}^\infty \frac{\mu(Q_{\alpha, i})}{|Q_{\alpha, i}|^{tq}} \left[ \sigma(Q^{\textrm{up}}_{\alpha, i}) \cdot \frac{1}{\sigma(Q_{\alpha, i})} \int_{Q_{\alpha, i}} |f(z)| \sigma^{-1}(z)\sigma(z) dA(z) \right]^q \nonumber \\
& \lesssim \sum_{i=1}^\infty \frac{\mu(Q_{\alpha, i})}{|Q_{\alpha, i}|^{tq}} \left[ \int_{Q_{\alpha, i}^{\textrm{up}}} \calM^{\calD, \sigma} \left(|f|\sigma^{-1} \right)(z) \sigma(z) dA(z) \right]^q,
\end{align}
where $\calM^{\calD, \sigma}$ is the weighted dyadic maximal operator given by 
$$
\calM^{\calD, \sigma}f(z):=\sup_{z \in Q_I, \; I \in \calD} \frac{1}{\sigma(Q_I)} \int_{Q_I} |f(z)|\sigma(z)dA(z). 
$$
Next, we estimate $\left\|\calM_t^{\calD} f \right\|_{L^q(\mu, \D)}$. First, write 
\begin{equation}  \label{20260117eq01}
\int_{\D} \left|\calM_t^{\calD} f(z) \right|^q \mu(z)dA(z) 
 \lesssim \sum_{\ell \in \Z} 4^{t\ell} \mu \left(E_{4^{t\ell}} \backslash E_{4^{t(\ell+1)}} \right).
\end{equation} 
Again, denote 
$$
\left\{ \widetilde{Q}_{4^{t\ell}, i} \right\}_{i \ge 1}:= \left\{Q_{4^{t\ell}, i} \right\}_{i \ge 1} \backslash \left\{Q_{4^{t(\ell+1)}, i} \right\}_{i \ge 1}.
$$
Therefore, using \eqref{20260117eq02} with $E_\alpha$ replaced by $E_{4^{t\ell}} \backslash E_{4^{t(\ell+1)}}$, and $\left\{Q_{\alpha, i} \right\}_{i \ge 1} $ by $\left\{ \widetilde{Q}_{4^{t\ell}, i} \right\}_{i \ge 1}$, respectively, we derive that 
\begin{align} \label{20260114eq01}
& \textrm{RHS of \eqref{20260117eq01}}  \nonumber \\
& \lesssim \sum_{\ell \in \Z}  \sum_{i=1}^\infty \frac{\mu(\widetilde{Q}_{4^{t\ell}, i})}{|\widetilde{Q}_{4^{t\ell}, i}|^{tq}} \left( \int_{\widetilde{Q}_{4^{t\ell}, i}^{\textrm{up}}} \calM^{\calD, \sigma} \left(|f|\sigma^{-1} \right)(z) \cdot  \sigma^{\frac{1}{p}}(z)\sigma^{\frac{1}{p'}}(z) dA(z) \right)^q \nonumber \\
& \lesssim \sum_{\ell \in \Z} \sum_{i=1}^\infty \frac{\mu(\widetilde{Q}_{4^{t\ell}, i})}{|\widetilde{Q}_{4^{t\ell}, i}|^{tq}} \left( \int_{\widetilde{Q}_{4^{t\ell}, i}^{\textrm{up}}} \left|\calM^{\calD, \sigma}(|f|\sigma^{-1})(z) \right|^p \sigma(z) dA(z) \right)^{\frac{q}{p}} \cdot \left( \int_{\widetilde{Q}_{4^{t\ell}, i}^{\textrm{up}}} \sigma(z) dA(z) \right)^{\frac{q}{p'}}  \nonumber\\
& =  \sum_{\ell \in \Z} \sum_{i=1}^\infty \frac{\mu(\widetilde{Q}_{4^{t\ell}, i}) \sigma^{\frac{q}{p'}}(\widetilde{Q}_{4^{t\ell}, i}^{\textrm{up}})}{|\widetilde{Q}_{4^{t\ell}, i}|^{tq}} \cdot  \left( \int_{\widetilde{Q}_{4^{t\ell}, i}^{\textrm{up}}} \left|\calM^{\calD, \sigma}(|f|\sigma^{-1})(z) \right|^p \sigma(z) dA(z) \right)^{\frac{q}{p}} \nonumber \\
& \le \left( \sum_{\ell \in \Z} \sum_{i=1}^\infty \int_{\widetilde{Q}_{4^{t\ell}, i}^{\textrm{up}}} \left|\calM^{\calD, \sigma}(|f|\sigma^{-1})(z) \right|^p \sigma(z) dA(z) \right)^{\frac{q}{p}} \cdot \left( \sum_{\ell \in \Z} \sum_{i=1}^\infty  \left( \frac{\mu(\widetilde{Q}_{4^{t\ell}, i}) \sigma^{\frac{q}{p'}}(\widetilde{Q}_{4^{t\ell}, i}^{\textrm{up}})}{|\widetilde{Q}_{4^{t\ell}, i}|^{tq}} \right)^{\frac{p}{p-q}} \right)^{\frac{p-q}{p}}.
\end{align}
Now for the first double summation in \eqref{20260114eq01}, we have 
\begin{align} \label{20260114eq02}
    \sum_{\ell \in \Z} \sum_{i=1}^\infty \int_{\widetilde{Q}_{4^{t\ell}, i}^{\textrm{up}}} \left|\calM^{\calD, \sigma}(|f|\sigma^{-1})(z) \right|^p \sigma(z) dA(z)
    & \lesssim \int_{\D}  \left|\calM^{\calD, \sigma}(|f|\sigma^{-1})(z) \right|^p \sigma(z) dA(z) \nonumber \\
    & \lesssim \int_{\D} \left| f(z) \sigma^{-1}(z) \right|^p \sigma(z) dA(z) \nonumber \\
    & \lesssim \int_{\D} |f(z)|^p \omega(z) dA(z),
\end{align}
where in the second estimate above, we have used the boundedness of $\calM^{\calD, \sigma}$ acting on $L^p(\sigma, \D)$ for any $1<p \le +\infty$. 

While for the second double summation in \eqref{20260114eq01}, we observe that 
\begin{align} \label{20260114eq03}
     \sum_{\ell \in \Z} \sum_{i=1}^\infty  \left( \frac{\mu(\widetilde{Q}_{4^{t\ell}, i}) \sigma^{\frac{q}{p'}}(\widetilde{Q}_{4^{t\ell}, i}^{\textrm{up}})}{|\widetilde{Q}_{4^{t\ell}, i}|^{tq}} \right)^{\frac{p}{p-q}}
     &\simeq \sum_{\ell \in \Z} \sum_{i=1}^\infty |\widetilde{Q}_{4^{t\ell}, i}^{\textrm{up}}| \left( \frac{\mu(\widetilde{Q}_{4^{t\ell}, i}) \sigma^{\frac{q}{p'}}(\widetilde{Q}_{4^{t\ell}, i}^{\textrm{up}})}{|\widetilde{Q}_{4^{t\ell}, i}|^{tq}  |\widetilde{Q}_{4^{t\ell}, i}|^{\frac{p-q}{p}}}\right)^{\frac{p}{p-q}} \nonumber \\
     & \lesssim \sum_{\ell \in \Z} \sum_{i=1}^\infty \int_{\widetilde{Q}_{4^{t\ell}, i}^{\textrm{up}}} \phi^{\frac{p}{p-q}}(z)dA(z) \le  \int_{\D} \phi^{\frac{p}{p-q}}(z)dA(z). 
\end{align}
Plugging \eqref{20260114eq02} and \eqref{20260114eq03} back to \eqref{20260114eq01}, we derive that 
$$
\int_{\D} |\calM_t^{\calD}f(z)|^q \mu(z)dA(z) \lesssim \left\| f \right\|_{L^p(\omega, \D)}^q \left\| \phi \right\|_{L^{\frac{p}{p-q}}(\D)}, 
$$
which gives $(2) \Longrightarrow (1)$. 

\medskip 

\textbf{$(1) \Longrightarrow (2)$.} \; Let $\{a_I\}_{I \in \calD}$ be any sequence of non-negative numbers, and 
$$
F(z):=\sum_{I \in \calD} a_I \sigma(z) \one_{Q_I^{\textrm{up}}} (z). 
$$
First, we note that
\begin{equation} \label{20260118eq01}
\int_\D |F(z)|^pw(z)dA(z)= \sum_{I \in \calD} a_I^p \int_{Q_I^{\textrm{up}}} \sigma^p(z) w(z)dA(z)=\sum_{I \in \calD} a_I^p \sigma(Q_I^{\textrm{up}}) \lesssim \sum_{I \in \calD} a_I^p \sigma(Q_I). 
\end{equation} 
On the other hand, for any $z \in Q_I^{\textrm{up}}$, one has
$$
\calM_t^{\calD} F(z) \ge \frac{1}{|Q_I|^t} \int_{Q_I} |F(z)|dA(z) \ge  \frac{1}{|Q_I|^t} \int_{Q^{\textrm{up}}_I} |F(z)|dA(z)=\frac{a_I \sigma(Q_I^{\textrm{up}})}{|Q_I|^t} \gtrsim \frac{a_I \sigma(Q_I)}{|Q_I|^t},
$$
where in the last estimate above, we have used the assumption that $\sigma \in {\bf B}_\infty(\calD)$. Therefore, 
\begin{align*}
\int_{\D} \left| \calM_t^{\calD} F(z) \right|^q \mu(z)dA(z) 
&  \gtrsim \sum_{I \in \calD} \int_{Q_I^{\textrm{up}}} \left(\frac{a_I \sigma(Q_I)}{|Q_I|^t} \right)^q \mu(z)dA(z) \\
& \gtrsim \sum_{I \in \calD} \frac{a_I^q \sigma^q(Q_I)}{|Q_I|^{tq}} \mu (Q_I^{\textrm{up}}) \\
& \gtrsim \sum_{I \in \calD} \frac{a_I^q \sigma^q(Q_I)}{|Q_I|^{tq}} \mu (Q_I) \\
& =\sum_{I \in \calD} \frac{\mu(Q_I) \sigma^{\frac{q}{p'}}(Q_I)}{|Q_I|^{tq}} \cdot \left(a_I^p \sigma(Q_I) \right)^{\frac{q}{p}} \\
&=\sum_{I \in \calD} \left(\beta_I |Q_I|^{1-\frac{q}{p}} \right) \cdot \left(a_I^p \sigma(Q_I) \right)^{\frac{q}{p}},
\end{align*}
where in the third estimate above, we use the fact that $\mu \in {\bf B}_\infty(\calD)$. Since $\calM_t^{\calD}: L^p(\omega, \D) \to L^q(\mu, \D)$ is bounded, using the above estimate together with \eqref{20260118eq01}, we derive that
\begin{equation} \label{20260118eq02}
\sum_{I \in \calD} \left(\beta_I |Q_I|^{1-\frac{q}{p}} \right) \cdot \left(a_I^p \sigma(Q_I) \right)^{\frac{q}{p}} \lesssim \left( \sum_{I \in \calD} a_I^p \sigma(Q_I) \right)^{\frac{q}{p}}, 
\end{equation} 
where the implicit constant in the above estimate is independent of the choice of $\{a_I\}_{I \in \calD}$. Since $p>q$ and the choice of $\{a_I\}_{I\in\mathcal D}$ is arbitrary, duality yields that  
$$
1 \gtrsim \sum_{I \in \calD} \left(\beta_I |Q_I|^{1-\frac{q}{p}} \right)^{\frac{p}{p-q}} \simeq \sum_{I \in \calD} |Q_I^{\textrm{up}}| \beta_I^{\frac{p}{p-q}}=\left\|\phi \right\|_{L^{\frac{p}{p-q}}(\D)}^{\frac{p}{p-q}}. 
$$
The proof is complete. 
\end{proof}

\medskip

\section{Critical line estimates for the hypersingular Bergman projection} \label{20251219sec04}

Our next goal is to establish critical line estimates for the hypersingular Bergman projection: for $1<t<3/2$, 
$$
K_{2t} f(z):=\int_{\D} \frac{f(w)}{(1-z\overline{w})^{2t}} dA(w),
$$
which can be regarded as a singular integral counterpart of the hypersingular maximal operator $\calM_t^{\calD}$. We first recall that the off-critical line $L^p$ theory was studied in \cite[Theorem 3]{CFWY2017}. In particular, they showed that for $K_{2t}$ is bounded from $L^p(\D)$ to $L^q(\D)$ if and only $1/q-1/p>2t-2$ for $1 \le p, q \le \infty$, which is exactly the off-critical line regime for $\calM_t^{\calD}$. Therefore, it is natural to consider the behavior of $K_{2t}$ on the critical line 
\begin{equation} \label{20260201eq01}
\left\{\left(\frac{1}{p}, \frac{1}{q} \right)  \in [0, 1]^2: \frac{1}{q}-\frac{1}{p}=2t-2 \right\}. 
\end{equation} 
We first have the following observation. 

\begin{lem} \label{20251225lem01}
For any $1<t<3/2$, $K_{2t}: L^\infty(\D) \to L^{\frac{1}{2t-2}, \infty}(\D)$ is bounded. 
\end{lem}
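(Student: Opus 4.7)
The plan is to reduce the claimed weak-type bound to a direct pointwise estimate for $K_{2t}$, in the same spirit as the proof of Lemma \ref{20251219lem02} for the hypersingular maximal operator $\calM_t^{\calD}$.

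First, for $f\in L^\infty(\D)$ I would bring the absolute value inside the integral and bound
\[
|K_{2t}f(z)|
\le \|f\|_{L^\infty(\D)} \int_{\D}\frac{dA(w)}{|1-z\overline{w}|^{2t}}, \qquad z\in\D.
\]
Since $2t>2$, the classical Forelli--Rudin integral estimate yields
\[
\int_{\D}\frac{dA(w)}{|1-z\overline w|^{2t}}\simeq \frac{1}{(1-|z|^2)^{2(t-1)}},
\]
so that one arrives at the pointwise control
\[
|K_{2t}f(z)|\lesssim \frac{\|f\|_{L^\infty(\D)}}{(1-|z|^2)^{2(t-1)}}, \qquad z\in\D.
\]
This is the exact analogue of the pointwise control
$\calM_t^{\calD}1(z)\simeq (1-|z|^2)^{-2(t-1)}$ used at the start of the proof of Lemma \ref{20251219lem02}.

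Next, assuming (by homogeneity) that $\|f\|_{L^\infty(\D)}=1$, and fixing $\lambda>0$, I would estimate the superlevel set directly: the pointwise bound shows that $\{z\in\D:|K_{2t}f(z)|>\lambda\}$ is contained in the annular set
\[
\left\{z\in\D:\ (1-|z|^2)^{2(t-1)}<\frac{C}{\lambda}\right\}
=\left\{z\in\D:\ 1-\left(\frac{C}{\lambda}\right)^{\frac{1}{2(t-1)}}\le |z|^2<1\right\},
\]
whose area is $\simeq \lambda^{-\frac{1}{2(t-1)}}$. Multiplying by $\lambda$ and raising to the power $2t-2=1/p$ with $p=\tfrac{1}{2t-2}$, I would conclude
\[
\lambda\,\bigl|\{z\in\D:|K_{2t}f(z)|>\lambda\}\bigr|^{\,2t-2}
\lesssim \lambda\cdot\lambda^{-1}\cdot\|f\|_{L^\infty(\D)}\simeq \|f\|_{L^\infty(\D)},
\]
which is exactly the desired weak-type $L^\infty\to L^{\frac{1}{2t-2},\infty}$ bound.

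The argument is, in essence, routine once the Forelli--Rudin estimate is invoked, and I do not expect any genuine obstacle: all the hypersingular singularity of $K_{2t}$ is absorbed into a single boundary factor $(1-|z|^2)^{-2(t-1)}$, and the remaining work is a one-line distribution-function computation identical to the one in Lemma \ref{20251219lem02}. The only small point worth emphasizing is that the exponent matches precisely: the boundary singularity has order $2(t-1)$, which is the reciprocal of the target Lorentz index $p=\tfrac{1}{2t-2}$, so that the $\lambda$-powers cancel and produce the sharp weak-type endpoint $(p,q)=(\infty,\tfrac{1}{2t-2})$ indicated in Figure \ref{Fig2}.
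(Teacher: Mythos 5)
Your proposal is correct and follows essentially the same route as the paper's proof: the pointwise bound $|K_{2t}f(z)|\lesssim \|f\|_{L^\infty(\D)}(1-|z|^2)^{-2(t-1)}$ via the standard Forelli--Rudin/Zhu integral estimate, followed by the same annulus-area level-set computation as in Lemma \ref{20251219lem02}. No gaps.
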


\begin{proof}
The proof of this lemma is straightforward. Let $f \in L^\infty(\D)$ with $\left\|f \right\|_{L^\infty(\D)}=1$. Observe now that
\begin{align*}
\left|K_{2t}f(z) \right|=\left| \int_{\D} \frac{f(w)}{(1-z\overline{w})^{2t}} dA(w) \right|  \lesssim \int_{\D} \frac{1}{|1-z\overline{w}|^{2t}} dA(w) \simeq \frac{1}{(1-|z|^2)^{2(t-1)}}, 
\end{align*}
where in the last estimate, we used the standard integral estimate \cite[Theorem 1.12]{Zhu2005}. Therefore, for any $\alpha>0$, as in Lemma \ref{20251219lem02}, 
$$
\left|\left\{z \in \D: \left|K_{2t}f(z) \right|>\alpha \right\} \right| \lesssim \alpha^{-\frac{1}{2(t-1)}}, 
$$
which yields the desired weak-type bounds. 
\end{proof}

As a consequence, we show that $K_{2t}$ satisfies a weak-type bound only in the region on and above the critical line~\eqref{20260201eq01}.

\begin{prop} \label{20260131prop02}
For any $1<t<3/2$ and $1 \le p, q \le +\infty$ with $\frac{1}{q}-\frac{1}{p}<2t-2$, $K_{2t}$ maps $L^p(\D)$ unboundedly to $L^{q, \infty}(\D)$.
\end{prop}

\begin{proof}
Without loss of generality, we may assume that \(q<+\infty\). The unboundedness in the case \(q=+\infty\) follows immediately from \cite[Theorem~3]{CFWY2017}.

Assume that \(K_{2t}\) maps \(L^p(\D)\) boundedly into \(L^{q,\infty}(\D)\). First, we consider the case $q>1$. Fix \(\varepsilon>0\) sufficiently small so that $q-\varepsilon>1$ and $1/(q-\varepsilon)-1/p<2t-2$. Then using the fact that $L^{q, \infty}(\D) \subseteq L^{q-\eps}(\D)$ (see, e.g., \cite[Exercise~1.1.11]{Grafakos2014}), we have
$$
\|K_{2t}f\|_{L^{q-\varepsilon}(\D)}
\lesssim
\|K_{2t}f\|_{L^{q,\infty}(\D)}
\lesssim
\|f\|_{L^p(\D)}.
$$
This contradicts \cite[Theorem~3]{CFWY2017}.

Finally, if $q=1$, then by applying the off-diagonal Marcinkiewicz interpolation to the bounds
$K_{2t}: L^p(\D)\to L^{1,\infty}(\D)$ and $K_{2t}: L^\infty(\D)\to L^{\frac{1}{2t-2},\infty}(\D)$
(see Lemma~\ref{20251225lem01}), one can choose exponents $1<\widetilde p,\widetilde q<+\infty$ such that
$K_{2t}$ maps $L^{\widetilde p}(\D)$ boundedly into $L^{\widetilde q,\infty}(\D)$, where
$(1/\widetilde p,\,1/\widetilde q)$ lies on the line segment joining $(0,\,2t-2)$ and $(1/p,\,1)$.
The same argument as in the case $q>1$ applies, and the proof is complete.

\end{proof}

Our next goal is to establish weak-type estimates at the other endpoint
$\bigl(\tfrac{1}{3-2t},1\bigr)$.
We have the following result.

\begin{prop} \label{20251226thm01}
For any $1<t<3/2$, $K_{2t}: L^{\frac{1}{3-2t}}(\D) \to L^{1, \infty}(\D)$ is bounded. 
\end{prop}

\begin{proof}
Let $f \in L^{\frac{1}{3-2t}}(\D)$. Write
$$
K_{2t} f(z)=W(z)B_{2t} f(z), 
$$
where $W(z):=(1-|z|^2)^{2-2t}$ and $B_{2t}$ is the Forelli--Rudin type operator given by 
$$
B_{2t} f(z):=(1-|z|^2)^{2t-2} \int_{\D} \frac{f(w)}{(1-z \overline{w})^{2t}} dA(w). 
$$
We make the following claims. 
\begin{enumerate}
    \item [(1)] $B_{2t}: L^{\frac{1}{3-2t}}(\D) \to L^{\frac{1}{3-2t}}(\D)$ is bounded; 
    \item [(2)] $W \in L^{\frac{1}{2t-2}, \infty}(\D)$. 
\end{enumerate}
Assuming (1) and (2), and using Lorentz--H\"older inequality, we have 
\begin{align*}
\left\|K_{2t} f \right\|_{L^{1, \infty}(\D)}
&=\left\| W \cdot B_{2t} f\right\|_{L^{1, \infty}(\D)} \lesssim \left\|W \right\|_{L^{\frac{1}{2t-2}, \infty}(\D)} \left\|B_{2t} f \right\|_{L^{\frac{1}{3-2t}}(\D)} \\
& \lesssim \left\|W \right\|_{L^{\frac{1}{2t-2}, \infty}(\D)} \left\|f \right\|_{L^{\frac{1}{3-2t}}(\D)} \lesssim\left\|f \right\|_{L^{\frac{1}{3-2t}}(\D)},  
\end{align*}
which gives the desired result. 

Therefore, it remains to verify claims (1) and (2). Claim (1) follows from \cite[Theorem~3]{Zhao2015} by choosing $
a=2t-2, b=0, c=2t, n=1,  \alpha=\beta=0$, and $p=q=1/(3-2t)$ there. Claim (2) follows from a direct computation.
\end{proof}

\begin{rem}
The proof of the above proposition illustrates what we will call the \emph{Forelli--Rudin method}:
one reduces the desired weak-type estimate to a weight multiplied by a less singular Forelli--Rudin type operator. While the proof of Proposition \ref{20251226thm01} is carried out in a complex-analytic way, we will see later that the same idea can be adapted to obtain endpoint estimates for certain hypersingular averaging operators (see, Theorem \ref{20251227thm01}).

\end{rem}

Combining Lemma \ref{20251225lem01} and Proposition \ref{20251226thm01} with off-diagonal Marcinkiewicz interpolation theorem, we have the following. 

\begin{cor} \label{20251228cor01}
For any $(p, q)$ belonging to the critical line regime, that is,
$$
\left\{\left(\frac{1}{p}, \frac{1}{q} \right)  \in  (0, 1)^2: \frac{1}{q}-\frac{1}{p}=2t-2 \right\},
$$
then for every $0<r<\infty$, the operator $K_{2t}$ extends to a bounded map 
$$
K_{2t}: L^{p,r}(\D)\to L^{q,r}(\D),
$$
In particular, $K_{2t}: L^p(\D) \to L^{q, \infty}(\D)$ is bounded. 
\end{cor}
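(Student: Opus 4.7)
The plan is to deduce the corollary by interpolating between the two endpoint inputs on the critical-line segment, following the same template as the proof of Corollary~\ref{20251226lem10} for the maximal operator. The two endpoints of the critical segment $\{(1/p,1/q)\in[0,1]^2:\ 1/q-1/p=2t-2\}$ are the corners $(0,2t-2)$ and $(3-2t,1)$, and each of them is covered by one of the weak-type estimates established earlier in this section. Specifically, Lemma~\ref{20251225lem01} supplies the $L^\infty(\D)\to L^{1/(2t-2),\infty}(\D)$ bound at the first corner, and Theorem~\ref{20251226thm01} supplies the restricted weak-type bound $L^{1/(3-2t),1}(\D)\to L^{1,\infty}(\D)$ at the second corner.

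With both endpoints framed as Lorentz-space bounds of the form $L^{p_i,s_i}(\D)\to L^{q_i,\infty}(\D)$ (using $L^\infty(\D)=L^{\infty,\infty}(\D)$ at the first corner and the identification with $L^{\infty,1}(\D)$ needed to fit into the Lorentz formulation), I would then invoke the off-diagonal Marcinkiewicz interpolation theorem in its Lorentz-space formulation, see \cite[Theorem~1.4.19]{Grafakos2014}. Since the two endpoint pairs $(p_0,q_0)=(\infty,1/(2t-2))$ and $(p_1,q_1)=(1/(3-2t),1)$ differ in both the source and target exponents, the hypotheses of that theorem are satisfied, and it yields, for every intermediate pair $(1/p,1/q)\in(0,1)^2$ on the critical line and every $0<r\le\infty$, the Lorentz-space bound $K_{2t}:L^{p,r}(\D)\to L^{q,r}(\D)$. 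This is the first assertion of the corollary.

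For the second assertion, I would specialize to $r=\infty$, which already gives $K_{2t}:L^{p,\infty}(\D)\to L^{q,\infty}(\D)$, and then compose with the trivial embedding $L^p(\D)\hookrightarrow L^{p,\infty}(\D)$ to conclude $K_{2t}:L^p(\D)\to L^{q,\infty}(\D)$. As for the difficulty: the actual substance of the proof lies not in this corollary but in securing the restricted weak-type endpoint at $(1/p,1/q)=(3-2t,1)$ (Theorem~\ref{20251226thm01}), which was handled via the hypersingular sparse bound combined with Bourgain's interpolation trick; once that endpoint is in hand, the interpolation step here is essentially mechanical, and there is no genuine obstacle at this stage.
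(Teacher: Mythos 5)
Your proposal is correct and follows exactly the paper's argument: the paper likewise obtains the corollary by interpolating Lemma~\ref{20251225lem01} (the corner $(0,2t-2)$) with the restricted weak-type endpoint of Theorem~\ref{20251226thm01} via the off-diagonal Marcinkiewicz interpolation theorem \cite[Theorem~1.4.19]{Grafakos2014}, and then specializes $r=\infty$ together with $L^p(\D)\subset L^{p,\infty}(\D)$ for the final assertion. Your closing remark is also accurate: the real work is in Theorem~\ref{20251226thm01}, and this step is mechanical.
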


\medskip 

\section{Hypersingular sparse operators} \label{20251219sec05}

A natural question arising from the study of $\calM_t^{\calD}$ and $K_{2t}$ is whether there exists a unified harmonic-analytic framework that treats these two complex-analytic models simultaneously, and that extends to a broader class of hypersingular operators.
It turns out that a more general principle underlies this phenomenon. 

\medskip

Let us now turn to some details. We recall some definitions first. 

\begin{defn} \label{20251227defn01a}
Let $\calD$ be a dyadic system in $\R^n$ and $\calS \subset \calD$ be a collection of dyadic cubes. For $0<\eta<1$, we say $\calS$ is $\eta$-sparse, if there exists a collection of measurable sets $\{E(Q)\}_{Q \in \calS}$, such that
\begin{enumerate}
    \item $E(Q) \subseteq Q$ for all $Q \in \calS$;
    \item $|E(Q)| \ge \eta |Q|$ for all $Q \in \calS$;
    \item $E(Q) \cap Q'=\emptyset$ for any $Q' \subsetneq Q$, $Q' \in \calS$.
\end{enumerate}
\end{defn}

\begin{rem}
We remark that the above notion of sparse families was introduced by Lerner, Lorist, and Ombrosi \cite{LLO2022} in their work on operator-free sparse domination. Compared with the usual definition used in the literature, this formulation is slightly more restrictive, most through the third condition above, which we refer to as the \emph{contracting property}. A simple observation shows that this contracting property forces the sets $\{E(Q)\}_{Q\in\mathcal S}$ to be pairwise disjoint, thereby recovering the usual disjointness requirement in the standard definition of a sparse family.

Such sparse collections arise naturally in many applications of sparse domination, for instance: (1) collections of Carleson boxes; and (2) sparse collections produced by stopping-time constructions, such as those associated with maximal operators and Calder\'on--Zygmund operators. We refer the reader to \cite{LLO2022} for further discussion and applications of sparse families with the contracting property.

\end{rem}

We begin with the following model case. Let $t>1$, and for simplicity, let $\mathcal S$ be an $\eta$-sparse family of cubes in $\mathbb R^n$ such that $Q \subseteq Q_0=[0, 1]^n$ for all $Q \in \calS$. We define the \emph{(pointwise) hypersingular sparse operator associated with $\mathcal S$} by
\[
\mathbb A_{\mathcal S}^t f(x)
:=\sum_{Q\in\mathcal S}\frac{\one_Q(x)}{|Q|^{t-1}}\langle |f|\rangle_Q
=\sum_{Q\in\mathcal S}\frac{\one_Q(x)}{|Q|^{t}}\int_Q |f(y)|\,dy.
\]

\begin{rem} \label{20251229rem02}

The normalization $Q_0=[0,1]^n$ is made only for convenience. In general, it suffices to assume that there exists a fixed dyadic cube $Q_0$ such that $Q\subseteq Q_0$ for all $Q\in\calS$; by a translation and dilation one may then reduce to the above normalized situation. 

This global containment assumption is natural in the present hypersingular regime. Indeed, as noted in Remark~\ref{20251227rem01}(2), when $t>1$ the operator $\mathbb A_{\calS}^t$ may not even be well-defined on nonzero constant functions whenever it is not localized. On the other hand, from the viewpoint of dyadic harmonic analysis, the assumption is mild: most operators of interest (such as Calder\'on--Zygmund operators and Hilbert transforms along monomial curves) are local, or can be decomposed into a sum of localized pieces, and this yields the existence of such a global cube $Q_0$ (for each localized piece).

\end{rem}

A natural question is the following.

\begin{ques}
Let $\eta\in(0,1)$, $t>1$, and $\mathcal S$ be an $\eta$-sparse family in $\mathbb R^n$ as above. 
For which pairs $(p,q)$ with $1\le p,q\le\infty$ does $\mathbb A_{\mathcal S}^t$ extend to a bounded operator
\[
\mathbb A_{\mathcal S}^t: L^{p}(\mathbb R^n)\to L^{q}(\mathbb R^n)
\qquad\text{or}\qquad
\mathbb A_{\mathcal S}^t: L^{p}(\mathbb R^n)\to L^{q,\infty}(\mathbb R^n)?
\]

\end{ques}

Our goal in the remainder of this section is to address this question.

\subsection{Graded family}  \label{20251229subsec01}

 It turns out that, in addition to the sparseness of $\calS$, there is another fundamental structural parameter that influences the behavior of the hypersingular sparse operator $\mathbb A_{\calS}^t$. 
 
The \emph{key} observation is already contained in Example~\ref{20251227obs01}, which shows that one must control how the sizes of cubes in $\calS$ change from one ``layer'' to the next. This motivates us to introduce the notion of a \emph{graded family} of dyadic cubes.

\vspace{0.1cm}

We now turn to some details. Let $\calG \subseteq \calD$ be a collection of dyadic cubes. Again, we may assume $Q \subseteq [0, 1]^n$ for all $Q \in \calG$. 

\vspace{0.1cm} 

\noindent $\bullet$ First, let $\calG^{(0)}$ denote the collection of all maximal dyadic cubes in $\calG$. For simplicity, we may assume that $\calG^{(0)}=\{[0, 1]^n \}$. Otherwise, we decompose $\calG$ into finitely many such collections and treat each one separately (by translation and dilation). To this end, we define $\mathfrak G_0:=\ell(Q_0)$, where $\ell(Q)$ denotes the sidelength of a dyadic cube $Q$.

\vspace{0.1cm}

\noindent $\bullet$ Next, let $\calG^{(1)}$ be the subcollection of all maximal dyadic cubes in $\calG \setminus \calG^{(0)}$, and define $\mathfrak G_1:=\inf_{Q \in \calG^{(1)}}\ell(Q)$. Iterating this procedure, we obtain a decomposition of $\calG$ into layers $\{\calG^{(j)}\}_{j\ge 0}$ together with the associated scales $\{\mathfrak G_j\}_{j\ge 0}$. Observe that for each $j \ge 1$, the dyadic cubes in $\calG^{(j)}$ are mutually disjoint. 

\begin{defn}\label{20251227defn01}
Let $\calG \subseteq \calD$ be a collection of dyadic cubes in $\R^n$ such that $\calG^{(0)}=\{[0,1]^n\}$, and write $\calG=\bigcup_{j \ge 0}\calG^{(j)}$ as above. We say that $\calG$ is \emph{graded} if
\[
K_{\calG}:=\sup_{j \ge 0}\left(\log_2 \frac{\mathfrak G_j}{\mathfrak G_{j+1}}\right)<\infty.
\]
We call $K_{\calG}$ the \emph{degree} of $\calG$, and we refer to $\calG^{(j)}$ as the \emph{$j$-th layer} of $\calG$.
\end{defn}

\begin{rem} \label{20251229rem01}
\begin{enumerate}
    \item Here, we may assume that $\calG^{(1)} \neq \emptyset$. Otherwise, $\mathbb A_{\calG}^t f(x)=\one_{[0, 1]^n}(x)\int_{[0, 1]^n} f$, which is a rank-one operator and, in particular, maps $L^1(\R^n)$ boundedly into $L^\infty(\R^n)$.

    \vspace{0.1cm}

    \item Here are some examples of graded family. Consider the collection of all dyadic cubes contained in $[0,1]^n$ together; then such a collection of dyadic cubes is graded with degree $1$. Another example is the collection\footnote{In that setting, the role of $Q_0$ is replaced by $\D$, and the sidelength $\ell(Q)$ in Definition~\ref{20251227defn01} is replaced by the length of the boundary arc associated with the Carleson box.} of dyadic Carleson boxes that appeared in our earlier analysis of $\calM_t^{\calD}$ and $K_{2t}$. This family is also graded, again with degree $1$. Finally, we observe that $K_{\calG} \ge 1$. 

    \vspace{0.1cm}

    \item It is clear that a graded family need \emph{not} be sparse, and vice versa.

    \vspace{0.1cm}

    \item It is not correct to replace the $\sup_{j \ge 0}$ in the definition of $K_{\calG}$ by the quantity
\[
K_{\calG}':=\limsup_{k\to\infty}\left(\log_2 \frac{\mathfrak G_j}{\mathfrak G_{j+1}}\right).
\]
Indeed, $K_{\calG}'$ only controls the ratios $\mathfrak G_j/\mathfrak G_{j+1}$ for sufficiently large $j$ and, in particular, imposes no restriction on the initial scales (for instance, on $\mathfrak G_0/\mathfrak G_1$). Consequently, one loses uniform control on the gaps between the first few layers. Using the same idea as in Observation~\ref{20251227obs01}, one can construct a sequence of sparse families with the same sparseness and the same value of $K_{\calG}'$, but for which the associated hypersingular sparse operators still exhibit the ``blow-up" phenomenon described in Example~\ref{20251227obs01}.
\end{enumerate}
\end{rem}

\subsection{$L^p$ theory for hypersingular sparse operator $\mathbb A_{\calS}^t$ induced by graded family}

We have the following result, whose proof relies on a dyadic version of the Forelli--Rudin method.

\begin{thm} \label{20251227thm01}
Let $\eta \in (0, 1)$, $\calS \subseteq \calD$ be a graded (contracting) $\eta$-sparse family in $[0, 1]^n$ with degree $K_{\calS}$ as in Definition \ref{20251227defn01}, $1<t<1-\frac{\log_2(1-\eta)}{nK_{\calS}}$, and $\mathbb A_{\calS}^t$ be the associated hypersingular sparse operator.  Then the following statements hold.
\begin{enumerate}
    \item (Off-critical line estimate) $\mathbb A_{\calS}^t$ extends to a bounded operator from $L^p(\R^n)$ to $L^q(\R^n)$ when $(p, q)$ belongs to the off-critical line regime associated to $\mathbb A_{\calS}^t$ given by 
    \begin{equation} \label{20251228eq01}
    \left\{\left(\frac{1}{p}, \frac{1}{q} \right) \in [0, 1]^2: \frac{1}{q}-\frac{1}{p}>\frac{nK_{\calS}(t-1)}{-\log_2(1-\eta)} \right\}. 
    \end{equation} 
    \item (Critical line estimate) $\mathbb A_{\calS}^t$ extends to a bounded operator from $L^{p}(\R^n)$ to $L^{q, \infty}(\R^n)$ when $(p, q)$ belongs to 
        \begin{equation} \label{20251228eq30}
        \left\{\left(\frac{1}{p}, \frac{1}{q} \right) \in [0, 1]^2: \frac{1}{q}-\frac{1}{p}=\frac{nK_{\calS}(t-1)}{-\log_2(1-\eta)}  \right\}. 
        \end{equation} 
\end{enumerate}
\end{thm}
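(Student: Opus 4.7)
The plan is to mirror the decomposition-plus-Bourgain strategy used for $K_{2t}$ in Theorem~\ref{20251226thm01}, with the graded sparse structure of $\calS$ playing the role previously filled by the dyadic Carleson boxes on $\D$. The new geometric ingredients are the layer structure and the degree $K_\calS$, which together with the sparseness $\eta$ will pin down the growth/decay exponents at each layer.

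I would first decompose
\[
\mathbb A_\calS^t = \sum_{j \ge 0} \mathbb A_{\calS, j}^t, \qquad \mathbb A_{\calS,j}^t f(x) := \sum_{Q \in \calG^{\{j\}}} \frac{\one_Q(x)}{|Q|^t}\int_Q |f(y)|\,dy,
\]
and invoke three structural facts: (i) cubes in each individual layer $\calG^{\{j\}}$ are pairwise disjoint in $[0,1]^n$; (ii) the graded condition gives $|Q| \ge 2^{-jnK_\calS}$ for every $Q \in \calG^{\{j\}}$; (iii) an iterated packing argument built on $\eta$-sparseness yields $\sum_{Q \in \calG^{\{j\}}}|Q| \lesssim (1-\eta)^j$. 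Writing $\alpha := -\log_2(1-\eta)$ and $\beta := nK_\calS(t-1)$, the hypothesis $1<t<1-\log_2(1-\eta)/(nK_\calS)$ is precisely $0 < \beta < \alpha$.

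From (i)--(iii) I would derive the two per-layer endpoints
\[
\|\mathbb A_{\calS,j}^t\|_{L^1(\R^n) \to L^1(\R^n)} \lesssim 2^{j\beta}, \qquad \|\mathbb A_{\calS,j}^t\|_{L^\infty(\R^n) \to L^1(\R^n)} \lesssim 2^{j(\beta - \alpha)}.
\]
Indeed, by disjointness $\|\mathbb A_{\calS,j}^t f\|_{L^1} = \sum_Q |Q|^{1-t}\int_Q |f| \le (\min_Q|Q|)^{1-t}\|f\|_{L^1}$, and combining this with $\int_Q |f| \le \|f\|_{L^\infty}|Q|$ together with (iii) gives the $L^\infty \to L^1$ bound. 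Feeding these into Bourgain's interpolation trick (Lemma~\ref{Bourgain}) with $(p_1,q_1,\beta_1) = (1,1,\beta)$ and $(p_2,q_2,\beta_2) = (\infty,1,\alpha-\beta)$ returns $\theta = (\alpha-\beta)/\alpha$ and lands exactly at $(1/p,1/q) = ((\alpha-\beta)/\alpha,\,1)$, which is precisely the endpoint asserted in part (2b).

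For part (1) and the interior of part (2a), I would complement the above with an $L^\infty \to L^{q_0}$ strong-type bound for every $q_0 < \alpha/\beta$ by summing the per-layer estimate
\[
\|\mathbb A_{\calS,j}^t 1\|_{L^{q_0}}^{q_0} = \sum_{Q \in \calG^{\{j\}}} |Q|^{1-q_0(t-1)} \lesssim 2^{j(q_0\beta - \alpha)},
\]
whose convergence as a geometric series is equivalent to $q_0 < \alpha/\beta$, i.e.\ to the off-critical range at $p=\infty$. Interpolating this $L^\infty \to L^{q_0}$ bound against the restricted weak-type endpoint from part (2b) via the off-diagonal Marcinkiewicz theorem \cite[Theorem~1.4.19]{Grafakos2014} then covers both the strong-type region in part (1) and the open critical-segment bounds in part (2a), after also deriving the symmetric weak endpoint at $p=\infty$ by a computation analogous to Lemma~\ref{20251225lem01}. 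The step I expect to require the most care is the layer-sum estimate (iii): under the weak (Lerner) form of sparseness, the disjoint witnessing sets $E(Q)$ may overlap the ``children'' of $Q$ inside $\calG^{\{j+1\}}$, so extracting the precise contraction factor $(1-\eta)$ per layer (rather than a weaker $(1-\eta)/\eta$) will require exploiting the graded dyadic geometry to effectively replace $E(Q)$ by $Q \setminus \bigcup_{Q' \subsetneq Q,\ Q' \in \calG^{\{j+1\}}} Q'$ in the packing argument.
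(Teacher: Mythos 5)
Your treatment of the endpoint (2b) coincides with the paper's argument (its Step III): the same layer decomposition $\mathbb A_{\calS}^t=\sum_j\mathbb A_{\calS^{(j)}}^t$, the same per-layer bounds $\|\mathbb A_{\calS^{(j)}}^t\|_{L^1\to L^1}\lesssim 2^{j\beta}$ and $\|\mathbb A_{\calS^{(j)}}^t\|_{L^\infty\to L^1}\lesssim 2^{j(\beta-\alpha)}$ with $\alpha=-\log_2(1-\eta)$, $\beta=nK_{\calS}(t-1)$, and the same application of Lemma~\ref{Bourgain} landing at $(1/p,1/q)=\bigl(\tfrac{\alpha-\beta}{\alpha},1\bigr)$. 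The layer-packing bound $\sum_{Q\in\calS^{(j)}}|Q|\lesssim(1-\eta)^j$ that you single out as delicate is exactly \eqref{20251226eq30}, which the paper also asserts from sparseness without further argument, so you are not behind the paper there (though your proposed repair, replacing $E(Q)$ by $Q$ minus its next-layer children, does not obviously preserve an $\eta$-proportional lower bound under Definition~\ref{20251227defn01a}, since those children may cover most or all of $Q$).

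The genuine gap is in how you claim parts (1) and (2a). Every point of the region \eqref{20251228eq01} has $q<p$, and in this lower-triangle regime the off-diagonal Marcinkiewicz theorem \cite[Theorem~1.4.19]{Grafakos2014} applied to your $L^\infty\to L^{q_0}$ bounds and the restricted weak-type endpoint only yields Lorentz bounds $L^{p,r}\to L^{q,r}$; taking $r=q$ gives $L^{p,q}\to L^{q}$ with $L^{p,q}\subsetneq L^{p}$ (because $q<p$), and no choice of $r$ produces the asserted strong bound $L^p\to L^q$. So interpolation alone does not deliver part (1). The paper instead proves (1) directly (its Step II): H\"older on each cube, disjointness within a layer, the graded bound $|Q|\ge 2^{-jnK_{\calS}}$, and \eqref{20251226eq30} give $\|\mathbb A_{\calS^{(j)}}^t\|_{L^p\to L^q}\lesssim 2^{\,j\left(nK_{\calS}(t-1)+\frac{p-q}{pq}\log_2(1-\eta)\right)}$, which sums geometrically exactly off the critical line; you would either need this computation or an extra embedding step on the finite measure space $[0,1]^n$ (e.g.\ $L^p\subset L^{p_0,q}$ for $q<p_0<p$ combined with the Lorentz bound at a nearby interior point). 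Relatedly, for (2a) you still need the weak-type endpoint $\mathbb A_{\calS}^t:L^\infty\to L^{\alpha/\beta,\infty}$, and ``a computation analogous to Lemma~\ref{20251225lem01}'' does not transfer: there is no pointwise majorant of $\mathbb A_{\calS}^t 1$ playing the role of $(1-|z|^2)^{-2(t-1)}$ for a general graded sparse family. The paper obtains this endpoint by an iterative layer/witness-set level-set argument (its Step I); alternatively it would follow from a second application of Lemma~\ref{Bourgain} with the per-layer bounds $\|\mathbb A_{\calS^{(j)}}^t\|_{L^\infty\to L^\infty}\lesssim 2^{j\beta}$ and $\|\mathbb A_{\calS^{(j)}}^t\|_{L^\infty\to L^1}\lesssim 2^{j(\beta-\alpha)}$, but as written this endpoint, and hence the critical-segment estimate, is not established in your proposal.
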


Before we prove Theorem \ref{20251227thm01}, we make some remarks.

\begin{rem} \label{20251230rem01}
Theorem~\ref{20251227thm01} strengthens our earlier results on the boundedness behavior of $K_{2t}$
(see, Section~\ref{20251219sec04}). Moreover, the proof of \textbf{Step~II} in
Theorem~\ref{20251227thm01} yields an alternative proof of \cite[Theorem~3]{CFWY2017}, and the proof of \textbf{Step~III} in in
Theorem~\ref{20251227thm01} offers a different proof of Proposition \ref{20251226thm01}. Moreover, Theorem~\ref{20251227thm01} is sharp, since it recovers the \(L^p\) theory for \(\calM_t^{\calD}\) and \(K_{2t}\) as special cases.

\end{rem} 

\begin{rem} \label{20251230rem01}
We make a further remark on the sparseness parameter. In our set-up, the sparseness
$\eta\in(0,1)$ should always be understood \emph{with respect to the underlying grid}.
For the dyadic grids (base $2$), it is convenient to encode $\eta$ by
\[
\kappa:=-\log_2(1-\eta),
\]
where the base $2$ reflects the dyadic structure.

This normalization is stable under changing the base of the grid. For example,
consider the Carleson boxes associated with a triadic system on $\T$
(i.e.\ in \eqref{20251230eq01} we replace dyadic arcs of length $2^{-j}$ by triadic arcs
of length $3^{-j}$). Let $\mathcal I$ be a triadic arc, then
the corresponding ``upper'' region is given by
\[
Q_{\mathcal I}^{\mathrm{up}}
:=\left\{z\in\D:\ \frac{z}{|z|}\in \mathcal I,\ 1-|\mathcal I|\le |z|<1-\frac{|\mathcal I|}{3}\right\},
\]
and hence the collection of all Carleson boxes associated with a triadic system is sparse
with sparseness $\eta=2/3$. Therefore, if we normalize the sparseness using the base of
the grid, then
\[
-\log_3\Bigl(1-\frac23\Bigr)=1,
\]
which coincides with the dyadic normalization $-\log_2(1-1/2)=1$.

\end{rem}

\begin{proof}[Proof of Theorem \ref{20251227thm01}]
For each $j \ge 0$, let $\calS^{(j)}$ denote the $j$-th layer of $\calS$ as in Definition \ref{20251227defn01}, and without loss of generality, we may assume $\calS^{(0)}=\{[0, 1]^n\}$. 

Observe that, by the sparseness assumption and contracting property of $\calS$, for any $j \ge 0$, one has
\begin{equation} \label{20251226eq30}
|\mathfrak D_j|=\sum_{Q \in \calS^{(j)}} |Q|  \lesssim (1-\eta)^j, 
\end{equation} 
where we write $\mathfrak D_j:=\bigcup_{Q \in \calS^{(j)}} Q$. We divide the proof into several steps.

\vspace{0.1cm}

\noindent {\bf Step I: Weak-type bounds at the point $\left(1/p, 1/q \right)=\left(0, \frac{nK_{\calS}(t-1)}{-\log_2(1-\eta)} \right)$.} Our goal is to show the boundedness of 
\begin{equation} \label{20251227eq80}
\mathbb A_{\calS}^t: L^\infty(\R^n) \to L^{\frac{-\log_2(1-\eta)}{nK_{\calS}(t-1)}, \infty}(\R^n). 
\end{equation} 
Let $\alpha>0$ and $f \in L^\infty(\R^n)$ with $\|f \|_{L^\infty(\R^n)}=1$. Then, we have to estimate the size of the level set $E:=\left|\left\{x \in [0, 1]^n: \left|\mathbb A_{\calS}^t f (x)\right|>\alpha \right\} \right|$. Without loss of the generality, we may assume $\alpha$ is sufficiently large. We have the following observation.

\medskip 

\noindent (1). First, consider $\calS^{(0)}=\{[0, 1]^n\}$. Observe that
$$
\mathbb A_{\calS}^t f(z) \le \frac{1}{\left|[0, 1]^n \right|^t} \int_{[0, 1]^n} |f(x)|dx \le 1, \qquad z \in [0, 1]^n \backslash \mathfrak D_1. 
$$
By the sparseness and contracting property of $\calS$, $\mathbb A_{\calS}^t f(z)$ can only take larger values on $\mathfrak D_1$, whose size is at most $1-\eta$. 

\medskip 

\noindent (2). Next, we consider the next layer $\calS^{(1)}$. Using the sparseness and contracting property of $\calS$ again, we find that
$$
\mathbb A_{\calS}^t f(z) \le 1+\frac{1}{|Q|^t} \int_{Q} |f(x)| dx \le 1+|Q|^{1-t} \le 1+2^{nK_{\calS}(t-1)}, \quad z \in [0, 1]^n \backslash \mathfrak D_2
$$
and $\mathbb A_{\calS}^t f(z)$ can only take larger values on $\mathfrak D_2$, 
whose size is at most $(1-\eta)^2$. 

\medskip 

Iterating the above procedure, we see that for any $J \ge 0$, if 
$$
\mathbb A_{\calS}^t f(z) \le \sum_{\ell=0}^J 2^{n\ell K_{\calS}(t-1)}=C_1 2^{nJK_{\calS}(t-1)}-C_2, 
$$
where $C_1, C_2>0$ are some absolute constants\footnote{Here, we can take $C_1=\frac{2^{nK_{\calS}(t-1)}}{2^{nK_{\calS}(t-1)}-1}$ and $C_2=\frac{1}{2^{nK_{\calS}(t-1)}-1}$. } that only depend on $n, K_{\calS}$ and $t$, then $z \in [0, 1]^n \backslash \mathfrak D_{J+1} $, whose size is at least $1-(1-\eta)^{J+1}$. 

Therefore, for any $\alpha>0$ sufficiently large, if $\mathbb A_{\calS}^t f(z)>C_1\alpha-C_2$, then $z \in \mathfrak D_{\widetilde{J}+1}$, for $\widetilde{J}>\frac{\log_2 \alpha}{nK_\calS(t-1)}$, which implies
\begin{align*}
|E|&
=\left| \left\{x \in [0, 1]^n: \left|\mathbb A_{\calS}^t f(x) \right|>C_1\alpha-C_2 \right\} \right| \\
& \lesssim (1-\eta)^{\frac{\log_2 \alpha}{nK_\calS(t-1)}} = \alpha^{\frac{\log_2(1-\eta)}{nK_{\calS}(t-1)}}. 
\end{align*}
Thus, this gives
$$
(C_1\alpha-C_2) |E|^{\frac{nK_{\calS}(t-1)}{-\log_2(1-\eta)}} \lesssim \alpha \cdot \left(\alpha^{\frac{\log_2(1-\eta)}{nK_{\calS}(t-1)}} \right)^{\frac{nK_{\calS}(t-1)}{-\log_2(1-\eta)}} \simeq 1, 
$$
which concludes the desired weak-type bound \eqref{20251227eq80}. 

\medskip 

\noindent {\bf Step II: Strong-type bounds within the off-critical line regime \eqref{20251228eq01}: off-critical line estimates. } Let $(p, q)$ be a pair satisfying 
\begin{equation} \label{20251228eq11}
\frac{1}{q}-\frac{1}{p}>\frac{nK_{\calS}(t-1)}{-\log_2(1-\eta)},
\end{equation} 
which, in particular, gives $p>q$. Without loss of generality, we may assume that $p<\infty$, as the case $p=\infty$ is analogous and we would like to leave the details to the interested reader.

Our goal in the second step is to show that $\mathbb A_{\calS}^t f$ extends a bounded operator from $L^p(\R^n)$ to $L^q(\R^n)$. Decompose
$$
\mathbb A_{\calS}^t =\sum_{j \ge 0} \mathbb A_{\calS^{(j)}}^t, 
$$
where 
$$
\mathbb A_{\calS^{(j)}}^t f(x):=\sum_{Q \in \calS^{(j)}} \frac{\one_Q(x)}{|Q|^{t}}\int_Q |f(y)|\,dy
$$
It suffices to show that there exists some absolute constant $C_3>0$, such that for each $j \ge 1$, 
\begin{equation}  \label{20251228eq12}
\left\|\mathbb A_{\calS^{(j)}}^t \right\|_{L^p(\R^n) \to L^q(\R^n)} \lesssim 2^{-C_3j}. 
\end{equation} 
Indeed, since the dyadic cubes in $\calS^{(j)}$ are mutually disjoint, we have 
\begin{align} \label{20251228eq02}
    \left\|\mathbb A_{\calS^{(j)}}^tf \right\|^{q}_{L^{q}(\R^n)}
&= \int_{[0, 1]^n}  \left|\sum_{Q \in \calS^{(j)}}  \frac{\one_Q(x)}{|Q|^{t}}\int_Q |f(y)|\,dy \right|^{q} dx \nonumber  \\
&= \sum_{Q \in \calS^{(j)}} |Q|^{1-tq} \left(\int_{Q} |f(y)| dy \right)^{q} \nonumber \\
&=\sum_{Q \in \calS^{(j)}}|Q|^{1-tq} |Q|^{q}\left(\frac{1}{|Q|}\int_{Q} |f(y)| dy \right)^{q} \nonumber \\
& \le \sum_{Q \in \calS^{(j)}}|Q|^{1-tq} |Q|^{q}\left(\frac{1}{|Q|}\int_{Q} |f(y)|^p dy \right)^{\frac{q}{p}} \nonumber \\
&= \sum_{Q \in \calS^{(j)}}|Q|^{q(1-t)} \cdot |Q|^{\frac{p-q}{p}} \left(\int_{Q} |f(y)|^p dy \right)^{\frac{q}{p}},
\end{align}
where $p'$ is the conjugate of $p$ satisfying $1/p+1/p'=1$.
Since $\calS$ is a graded family with degree $K_{\calS}$, we have for each $Q \in \calS^{(j)}$, $|Q| \ge 2^{-jK_{\calS}n}$. Therefore, using the assumption that $t>1$ and \eqref{20251226eq30}, we derive that 
\begin{align*}
\textrm{RHS of \eqref{20251228eq02}} 
& \le 2^{jK_{\calS}nq(t-1)} \sum_{Q \in \calS^{(j)}} |Q|^{\frac{p-q}{p}} \left(\int_{Q} |f(y)|^p dy \right)^{\frac{q}{p}} \\
& \le 2^{jK_{\calS}nq(t-1)}  \left(\sum_{Q \in \calS^{(j)}} |Q| \right)^{\frac{p-q}{p}} \left(\sum_{Q \in \calS^{(j)}} \int_Q |f(y)|^p dy \right)^{\frac{q}{p}} \\
& \le 2^{jK_{\calS}nq(t-1)} \cdot (1-\eta)^{\frac{(p-q)j}{p}} \left\|f \right\|_{L^p(\R^n)}^q \\
&= 2^{\,j\left(K_{\calS}nq(t-1)+\frac{p-q}{p}\log_2(1-\eta)\right)} \left\|f \right\|_{L^p(\R^n)}^q, 
\end{align*}
which gives
\begin{equation} \label{20251228eq04}
\left\| \mathbb A_{\calS^{(j)}}^t \right\|_{L^p(\R^n) \to L^q(\R^n)} \lesssim 2^{j\left(K_{\calS}n(t-1)+\frac{p-q}{pq}\log_2(1-\eta)\right)}.
\end{equation} 
Note that by \eqref{20251228eq11}, we have $K_{\calS}n(t-1)+\frac{p-q}{pq}\log_2(1-\eta)<0$. Hence \eqref{20251228eq12} holds, which completes \textbf{Step II}.

\medskip 

\noindent{\bf Step~III: Weak-type estimate at 
$\left(1/p, 1/q\right)
=\left(\frac{-\log_2(1-\eta)+nK_{\calS}(1-t)}{-\log_2(1-\eta)},\,1\right)$
 via a dyadic version of the Forelli--Rudin method.}
In this step we implement the Forelli--Rudin method. More precisely,  for any $x\in[0,1]^n$, define the counting function ${\bf N}$ and the weight function ${\bf W}$ at $x$ by
$$
{\bf N}(x):=\sup \; \{\,j\in\N:\ x\in \mathfrak D_j\,\} 
\qquad \textrm{and} \qquad 
{\bf W}(x):=2^{\,nK_{\calS}(t-1){\bf N}(x)}, 
$$
respectively. Note that the set $\{x \in [0, 1]^n: {\bf N}(x)=+\infty\}$ has measure zero. We have the following claims. 

\medskip 

\noindent {\bf Claim 1:}
$$
\mathbb A_{\calS}^t f(x) \lesssim {\bf W}(x) M_{\textrm{HL}}^{\calD} f(x), \qquad a.e. \quad x \in [0, 1]^n, 
$$ 
where $M_{\textrm{HL}}^{\calD}$ refers to the standard dyadic Hardy--Littlewood maximal operator associated to $\calD$. Indeed, using the fact that $\calS$ is graded, we have
\begin{equation} \label{20260127eq02}
\mathbb A_{\calS}^t f(x) 
 =\sum_{x \in Q \in \calS} |Q|^{1-t} \cdot \frac{1}{|Q|} \int_Q |f(y)| dy 
 \le M_{\textrm{HL}}^{\calD} f(x) \cdot \sum_{x \in Q \in \calS} |Q|^{1-t}, \qquad a.e. \quad x \in [0, 1]^n. 
\end{equation} 
By the definition of the counting function $N$, we know that for each $x \in [0, 1]^n$, $x \in \mathfrak D_{{\bf N}(x)} \subseteq \dots \subseteq \mathfrak D_0=[0, 1]^n$ and $x$ belongs to at most one dyadic cube in each $\calS^{(j)}, \; 0 \le j \le {\bf N}(x)$. Therefore, by the fact that $\calS$ is graded, 
\begin{equation} \label{20260127eq03}
\sum_{x \in Q \in \calS} |Q|^{1-t} \le \sum_{j=0}^{{\bf N}(x)} 2^{njK_{\calS}(t-1)} \lesssim 2^{n{\bf N}(x)K_{\calS}(t-1)}={\bf W}(x), \qquad a.e. \quad x \in [0, 1]^n.   
\end{equation}
The desired {\bf Claim 1} therefore follows from \eqref{20260127eq02} and \eqref{20260127eq03}. 

\medskip 

\noindent {\bf Claim 2:} Denote 
$$
p=\frac{-\log_2(1-\eta)}{-\log_2(1-\eta)+nK_{\calS}(1-t)} \qquad \textrm{ with} \qquad  p'=\frac{-\log_2(1-\eta)}{nK_{\calS}(t-1)}.
$$ 
Then $W \in L^{p', \infty}(\R^n)$. 

To see the second claim, we observe that 
$$
\{x \in [0, 1]^n: {\bf W}(x) \ge 2^{nK_{\calS}(t-1)j} \}=\{x \in [0, 1]^n: {\bf N}(x) \ge j\} \subseteq \mathfrak D_j,
$$
and therefore, by the contracting property of $\calS$, 
$$
\left| \{x \in [0, 1]^n: {\bf W}(x) \ge 2^{nK_{\calS}(t-1)j} \} \right| \le |\mathfrak D_j|=\sum_{Q \in \calS^{(j)}} |Q| \lesssim (1-\eta)^j. 
$$
Thus, for any $j \ge 0$,
\begin{align*}
2^{nK_{\mathcal S}(t-1)j}
\Bigl|\bigl\{x\in[0,1]^n:\, {\bf W}(x)\ge 2^{nK_{\mathcal S}(t-1)j}\bigr\}\Bigr|^{\frac{1}{p'}}
&\lesssim 2^{nK_{\mathcal S}(t-1)j}(1-\eta)^{\frac{j}{p'}} \\
&\lesssim 2^{nK_{\mathcal S}(t-1)j}\,(1-\eta)^{\,j\cdot\frac{nK_{\mathcal S}(t-1)}{-\log_2(1-\eta)}} =1.
\end{align*}
In general, for any $\lambda>0$, choose $j\ge 0$ such that $\lambda \in \bigl[2^{nK_{\calS}(t-1)j},\,2^{nK_{\calS}(t-1)(j+1)}\bigr)$. Then $\{{\bf W}\ge \lambda\}\subseteq \{{\bf W}\ge 2^{nK_{\calS}(t-1)j}\}$, so the above estimate extends to all $\lambda>0$, which proves {\bf Claim 2}.

\medskip

To this end, using both {\bf Claims 1} and ${\bf 2}$ and Lorentz--H\"older's inequality, we have 
\begin{align} \label{restrictedWTX}
\left\| \mathbb A_{\calS}^t f\right\|_{L^{1, \infty}(\R^n)} &
\lesssim \left\| W \cdot M_{\textrm{HL}}^{\calD} f \right\|_{L^{1, \infty}(\R^n)} \lesssim \left\|W \right\|_{L^{p', \infty}(\R^n)} \left\|M_{\textrm{HL}}^{\calD} f \right\|_{L^p(\R^n)}  \nonumber \\
& \lesssim \left\|W \right\|_{L^{p', \infty}(\R^n)} \left\| f \right\|_{L^p(\R^n)} \lesssim \|f \|_{L^p(\R^n)}. 
\end{align} 
The proof of {\bf Step 3} is complete. 

\medskip 

\noindent {\bf Step IV: Weak-type bounds on the critical line \eqref{20251228eq30}: critical line estimate.} The last part simply follows from an application of the off-diagonal Marcinkiewicz interpolation between \eqref{20251227eq80} and \eqref{restrictedWTX}.

\vspace{0.1in}

The proof is complete. 
\end{proof}

\subsection{Revisiting the endpoint $\left(1/p,1/q\right)=\left(\frac{-\log_2(1-\eta)+nK_{\calS}(1-t)}{-\log_2(1-\eta)},\,1\right)$ using sparse domination and Bourgain's interpolation trick} \label{20260127subsec01}

In the final part of this section, we revisit \textbf{Step~3} in the proof of Theorem~\ref{20251227thm01}. We present two different approaches: one via sparse domination (built on  \cite[Theorem~E]{CCDO2017}) and the other via Bourgain's interpolation trick. Although both approaches yield only a \emph{restricted weak-type estimate} at this upper-right endpoint, they still suffice to obtain weak-type bounds at other points on the critical line, namely,
\[
\left\{\left(\frac{1}{p},\frac{1}{q}\right)\in[0,1]^2:\ 
\frac{1}{q}-\frac{1}{p}=\frac{nK_{\calS}(t-1)}{-\log_2(1-\eta)},\ \ q\neq 1\right\}.
\]
We expect these methods to have further applications to other problems concerning $\mathbb A_{\calS}^t$, for instance, to weighted estimates along the critical line.

\subsubsection{Restricted weak-type estimate at $\left(1/p,1/q\right)=\left(\frac{-\log_2(1-\eta)+nK_{\calS}(1-t)}{-\log_2(1-\eta)},\,1\right)$ via sparse domination} \label{20260127subsubsection01}
Denote as usual
$$
p=\frac{-\log_2(1-\eta)}{-\log_2(1-\eta)+nK_{\calS}(1-t)} \quad \textrm{with} \quad p'=\frac{-\log_2(1-\eta)}{nK_{\calS}(t-1)}, 
$$
and let $f \in L^{p, 1}(\R^n)$. Without loss of generality, we may assume $\textrm{supp} f \subseteq [0, 1]^n$. By \cite[Exercise 1.4.14]{Grafakos2014} and using the assumption that ${\mathbb A}_{\calS}^t$ is localized in $[0, 1]^n$, we have 
\begin{equation} \label{20260126eq01}
\left\| {\mathbb A}_{\calS}^t f \right\|_{L^{1, \infty}(\R^n)} \simeq \sup_{\substack{E \subseteq [0, 1]^n \\ |E|>0}} \inf_{\substack{E' \subseteq E \\ |E| \le 2|E'|}} \left| \int_{E'} {\mathbb A}_{\calS}^t f(x)dx \right|. 
\end{equation} 
 Then for any measurable $E' \subseteq [0, 1]^n$, one has 
\begin{align} \label{20260126eq02}
 \int_{E'} {\mathbb A}_{\calS}^t f(x)dx 
 &= \sum_{Q \in \calS} |Q| \left(\frac{1}{|Q|^t} \int_Q |f(x)|dx \right) \left( \frac{1}{|Q|} \int_{Q} \one_{E'}(x) dx \right)  \nonumber \\
 & = \sum_{Q \in \calS} |Q| \left(\frac{1}{|Q|} \int_Q |f(x)|dx \right) \left( \frac{1}{|Q|^t} \int_{Q} \one_{E'}(x) dx \right) \nonumber \\
 & \lesssim \sum_{Q \in \calS} |E(Q)| \left(\frac{1}{|Q|} \int_Q |f(x)|dx \right) \left( \frac{1}{|Q|^t} \int_{Q} \one_{E'}(x) dx \right) \nonumber  \\
 & \lesssim \int_{[0, 1]^n} M_{\textrm{HL}}^{\calD} f(x) M_t^{\calS} \one_{E'}(x)dx, 
\end{align} 
where
$$
M_t^{\calS} f(x):=\sup_{Q \in \calS} \frac{\one_{Q}(x)}{|Q|^t} \int_Q |f(x)|dx
$$
is the \emph{hypersingular maximal operator} associated to the graded sparse family $\calS$. Observe that 
\begin{equation} \label{20260127eq50}
M_t^{\calS}: L^\infty(\R^n) \to L^{p', \infty}(\R^n). 
\end{equation} 
Indeed, this follows directly by {\bf Step I} in Theorem~\ref{20251227thm01} and the pointwise sparse bound $M_t^{\calS} f(x) \lesssim \mathbb A_{\calS}^t f(x), \; x \in \R^n$. Therefore, by \eqref{20260126eq02},  Lorentz--H\"olider's inequality, and the $L^{p, 1}(\R^n)$ boundedness\footnote{This is a standard consequence of the off-diagonal Marcinkiewicz interpolation theorem applied to  $M_{\mathrm{HL}}^{\mathcal D}$ (see, e.g.,  \cite[Theorem~1.4.19]{Grafakos2014}).}
 of $M_{\textrm{HL}}^{\calD}$, we have
\begin{align*}
\int_{E'} \mathbb A_{\calS}^t f(x)dx &\lesssim \left\|M_{\textrm{HL}}^{\calD} f \right\|_{L^{p, 1}(\R^n)} \left\|M_t^{\calS} \one_{E'} \right\|_{L^{p', \infty}} \\
& \lesssim \|f\|_{L^{p, 1}(\R^n)} \left\|\one_{E'} \right\|_{L^\infty(\R^n)} \lesssim \|f\|_{L^{p, 1}(\R^n)}. 
\end{align*}
Combining the above estimate with  \eqref{20260126eq01}, we derive that
\begin{equation} \label{20260127eq67}
\left\|\mathbb A_{\calS}^t \right\|_{L^{1, \infty}(\R^n)} \lesssim \left\|f \right\|_{L^{p, 1}(\R^n)}, 
\end{equation} 
which gives the desired result. 
\medskip

\subsubsection{Restricted weak-type estimate at $\left(1/p,1/q\right)=\left(\frac{-\log_2(1-\eta)+nK_{\calS}(1-t)}{-\log_2(1-\eta)},\,1\right)$ via Bourgain's interpolation trick} 

We begin by recalling Bourgain's interpolation lemma (see, e.g., \cite{Bourgain1985,BS2011}; see also \cite{CSWW1999} for an abstract extension in the setting of fairly general normed vector spaces).

\begin{lem}[Bourgain's interpolation trick]\label{Bourgain} 
Let $\beta_1, \beta_2>0$ and $\{T_j\}_{j \ge 0}$ be a collection of sublinear operators satisfying
$$
\left\|T_j \right\|_{L^{p_1}(\R^n) \to L^{q_1}(\R^n)} \le M_1 2^{\beta_1 j}
$$
and
$$
\left\|T_j \right\|_{L^{p_2}(\R^n) \to L^{q_2}(\R^n)} \le M_2 2^{-\beta_2 j}, 
$$
for some $1 \le p_1, p_2, q_1, q_2 \le +\infty$ and $M_1, M_2>0$, then $T=\sum_{j \ge 1} T_j$ enjoys restricted weak type estimate between the intermediate spaces:
$$
\left\|T \right\|_{L^{p, 1}(\R^n) \to L^{q, \infty}(\R^n)} \le C M_1^{\theta} M_2^{1-\theta}, 
$$
where
$$
\theta=\frac{\beta_2}{\beta_1+\beta_2}, \quad \frac{1}{p}=\frac{\theta}{p_1}+\frac{1-\theta}{p_2}, \quad \textrm{and} \quad \frac{1}{q}=\frac{\theta}{q_1}+\frac{1-\theta}{q_2}, 
$$
and $C$ depends only on $\beta_1$ and $\beta_2$. 
\end{lem}

\medskip 

To apply Lemma~\ref{Bourgain}, we first note that, on the one hand,
letting $p=\infty$ and $q=1$ in \eqref{20251228eq04}, we see that 
$$
\left\| \mathbb A_{\calS^{(j)}}^t \right\|_{L^\infty(\R^n) \to L^1(\R^n)} \lesssim 2^{j\left(K_{\calS}n(t-1)+\log_2(1-\eta)\right)}. 
$$
Observe that $nK_{\calS}(t-1)+\log_2(1-\eta)<0$, which follows from the assumption $t<1-\frac{\log_2(1-\eta)}{nK_{\calS}}$. On the other hand,  we have 
\begin{align*}
    \left\|\mathbb A_{\calS^{(j)}}^tf \right\|_{L^1(\R^n)}
&= \int_{\R^n}  \left|\sum_{Q \in \calS^{(j)}}  \frac{\one_Q(x)}{|Q|^{t}}\int_Q |f(y)|\,dy \right|dx \\
&= \sum_{Q \in \calS^{(j)}} |Q|^{1-t} \int_{Q} |f(y)| dy  \\
& \le 2^{jK_{\calS}n(t-1)} \sum_{Q \in \calS^{(j)}}  \int_{Q} |f(y)| dy \\
& \le 2^{jK_{\calS}n(t-1)} \left\|f \right\|_{L^1(\R^n)}, 
\end{align*}
which gives
\begin{equation} \label{20251226eq11}
\left\|\mathbb A_{\calS^{(j)}}^t  \right\|_{L^{1}(\R^n) \to L^{1}(\R^n)} \lesssim 2^{jK_{\calS}n(t-1)}. 
\end{equation}
Applying now Lemma \ref{Bourgain} with
$$
\beta_1=K_Sn(t-1), \quad \beta_2=-nK_{\calS}(t-1)-\log_2(1-\eta), \quad p_1=q_1=q_2=1, \quad \textrm{and} \quad p_2=\infty,
$$
which gives the boundedness of 
\begin{equation} \label{restrictedWT}
\mathbb A_{\calS}^t: L^{p, 1}(\R^n) \to L^{q, \infty}(\R^n), 
\end{equation} 
with
$$
\theta=\frac{\beta_2}{\beta_1+\beta_2}=\frac{-\log_2(1-\eta)+nK_{\calS}(1-t)}{-\log_2(1-\eta)}, \qquad \frac{1}{q}=1, 
$$
and
$$
\frac{1}{p}=\frac{\theta}{p_1}+\frac{1-\theta}{p_2}=\frac{-\log_2(1-\eta)+nK_{\calS}(1-t)}{-\log_2(1-\eta)}.
$$
The proof is complete. 

\medskip 

\section{Some open problems} \label{20251219sec06}

\noindent\textbf{1. Upgrading restricted weak-type to weak-type estimates at the right-upper endpoint via sparse domination.} It would be interesting to return to the argument in Section~\ref{20260127subsubsection01} and explore whether the stopping time argument in \cite[Theorem~E]{CCDO2017} can be adapted to $\mathbb A_{\calS}^t$, thereby upgrading \eqref{20260127eq67} to a weak-type estimate. Such an improvement would have further applications to the weighted theory for $\mathbb A_{\calS}^t$.

\medskip

\noindent\textbf{2. Further applications of Forelli-Rudin method on critical lines.} It would be desirable to further explore the Forelli-Rudin methods developed in this work in establishing critical-line estimates for other hypersingular operators. For instance:

\smallskip

\noindent\textbf{(a) Forelli--Rudin type operators.}
For $a,b,c\in\R$, define
\[
T_{a,b,c}f(z)=(1-|z|^2)^a\int_{\B_n}\frac{(1-|w|^2)^b}{(1-\langle z,w\rangle)^c}\,f(w)\,dV(w),
\]
and
\[
S_{a,b,c}f(z)=(1-|z|^2)^a\int_{\B_n}\frac{(1-|w|^2)^b}{|1-\langle z,w\rangle|^c}\,f(w)\,dV(w),
\]
where $\B_n$ is the unit ball in $\C^n$ and $dV$ is the normalized volume measure on $\B_n$.
In \cite{ZZ2022}, Zhao and Zhou characterized the strong-type $L^p_\alpha(\B_n)\to L^q_\beta(\B_n)$ bounds for these operators under various assumptions on the parameters $a,b,c,\alpha$, and $\beta$.
Here, for $1\le p<\infty$ and $-1<\alpha<\infty$, the space $L^p_\alpha(\B_n)$ is defined with respect to the measure $dv_\alpha(z)=c_\alpha(1-|z|^2)^\alpha\,dV(z)$, where $c_\alpha$ is chosen so that $v_\alpha(\B_n)=1$.
A natural question is whether the method developed in the present paper can be applied to obtain critical-line estimates for Forelli--Rudin type operators in the hypersingular regime (see, e.g., \cite[Theorem~1.1]{ZZ2022}).

\medskip

\noindent\textbf{(b) Other hypersingular averaging operators and forms.}
One may consider an $r$-th mean variant of the hypersingular sparse operator: for $t>1$ and $r\ge1$, define
\[
\mathbb A_{\calS,r}^t f(x)
:=\sum_{Q\in\calS}\frac{\one_Q(x)}{|Q|^{t-1}}\langle |f|\rangle_{Q,r}
=\sum_{Q\in\calS}\frac{\one_Q(x)}{|Q|^{t-1+\frac1r}}\Bigl(\int_Q |f|^r\Bigr)^{1/r},
\]
where $\langle |f|\rangle_{Q,r}:=\bigl(|Q|^{-1}\int_Q |f|^r\bigr)^{1/r}$ and $\calS$ is a graded sparse family in $\R^n$.
More generally, for $t>1$ and $1\le r,s<\infty$, one may introduce the \emph{$(r,s)$-hypersingular sparse form}
\[
\Lambda^t_{\calS;r,s}(f_1,f_2)
:=\sum_{Q\in\calS}|Q|^{2-t}\,\langle |f_1|\rangle_{Q,r}\,\langle |f_2|\rangle_{Q,s}.
\]
It would be interesting to understand how the parameters $t,r,s$, together with the geometry of $\calS$, affect the boundedness properties of these operators and forms.

\bigskip

\end{document}